\numberwithin{equation}{section}
\newtheorem{theorem}{Theorem}[section]
\newtheorem{lemma}[theorem]{Lemma}
\theoremstyle{definition}
\newtheorem{definition}[theorem]{Definition}
\theoremstyle{remark}
\newtheorem{remark}[theorem]{Remark}
\newtheorem{notation}[theorem]{Notation}
\newcommand{\lint}[1]{\int_{0}^T\!\!\int_{\dom} #1 \, dxdt} 
\newcommand{\wu}{\widehat{u}}
\newcommand{\supp}{\operatorname{supp}}
\newcommand{\Div}{\operatorname{div}}
\newcommand{\Grad}{\nabla}
\newcommand{\vu}{\vc{u}}
\newcommand{\ihat}{\hat{\imath}}
\newcommand{\jhat}{\hat{\jmath}}
\newcommand{\vc}[1]{{\bm{#1}}}
\newcommand{\weak}{\rightharpoonup}
\newcommand{\weakp}[2]{\overline{#1\strut}\;\overline{#2\strut}}
\newcommand{\weakl}[1]{{\overline{#1}}}
\newcommand{\n}{n} 
\newcommand{\nmax}{\overline{\n}_{\infty}} 
\newcommand{\W}{W} 
\newcommand{\dom}{\Omega}
\newcommand{\G}{\vc{G}}
\newcommand{\cell}{\mathcal{C}}
\newcommand{\Dt}{\Delta t}
\newcommand{\R}{\mathbb{R}}
\newcommand{\Dom}{(0,T)\times\Omega}
\newcommand{\e}{\varepsilon}
\newcommand{\eps}{\epsilon}
\begin{document}

\title[On a tumor growth model] {A convergent explicit finite difference scheme for  a mechanical model for tumor growth}

\author[Trivisa]{Konstantina Trivisa}
\address[Trivisa]{\newline
Department of Mathematics \\ University of Maryland \\ College Park, MD 20742-4015, USA.}
\email[]{\href{http://www.math.umd.edu}{trivisa@math.umd.edu}}
\urladdr{\href{http://www.math.umd.edu/~trivisa}{math.umd.edu/\~{}trivisa}}

\author[Weber]{Franziska Weber}
\address[Weber]{\newline
Departement of  Mathematics\\
University of Oslo\\
0316 Oslo, Norway.}
\email[]{\href{franziska.weber@cma.uio.no}{franziska.weber@cma.uio.no}}

%
%

\date{\today}

\subjclass[2010]{Primary: 35Q30, 76N10; Secondary: 46E35.}

\keywords{Tumor growth models, cancer progression, mixed models, multi-phase flow, finite difference scheme, existence.}

\thanks{}

\maketitle

\begin{abstract}
Mechanical models for tumor growth have been used extensively in recent years for the analysis of medical observations and for the prediction of cancer evolution based on imaging analysis.
This work deals with the numerical approximation of a mechanical model for tumor growth and the analysis of  its dynamics.
The system under investigation is given by a multi-phase flow model: The densities of the different cells are governed by a  transport equation for the evolution of tumor cells, whereas the velocity field  is given by a Brinkman regularization of the classical Darcy's law. 
An efficient finite difference scheme is proposed and shown to converge to a weak solution of the system. Our approach relies on convergence and compactness arguments in the spirit of Lions \cite{Lions-1998}.
\end{abstract}

\section{Introduction}\label{S1}

\subsection{Motivation}
Mechanical models for tumor growth are used extensively in recent years for  the prediction of cancer evolution based on imaging analysis. Such models are based on the assumption that the growth of the tumor is mainly limited by the competition for space.
Mathematical modeling, analysis and numerical simulations together with experimental and clinical observations are essential components in the effort to enhance our understanding of the cancer development. 
The goal of this article is to make a further step  in the investigation of such models by  presenting  a convergent explicit finite difference scheme for the numerical approximation of a Hele-Shaw-type model for tumor growth and  by providing its detailed mathematical analysis.  Even though the main focus in the present work is on the investigation of the evolution of the proliferating cells, it provides a mathematical framework that can potentially accommodate more complex systems that account for the presence of nutrient and drug application. This will be the subject of future investigation~\cite{TW-prep2015}.

\subsection{Governing equations}\label{1.2}
In the present context the tissue is considered as a multi-phase fluid and the ability of the tumor to expand into a host tissue is then primarily driven by the cell division rate which depends on the
local cell density and the mechanical pressure in the tumor.
\subsubsection{Transport equations for the evolution of the cell densities}
The dynamics of the cell population density $\n(t, x)$ under pressure forces and cell multiplication is described by a transport equation
\begin{equation}
{\partial_t}\n - \Div(\n  {\vc{u}}) = \n \vc{G}(p), \quad x \in \dom, \,\, t \ge 0 \label{cell-n}
\end{equation}
where $\n$ represents the number   density of tumor cells, $\vu$ the velocity field and  $p$ the pressure of the {\em tumor}. $\dom$ is a bounded domain in $\R^d$, $d=2,3$. The pressure law is given by 
\begin{equation}
p (n) = a n^{\gamma}, \label{pressure-p}
\end{equation}
where $\gamma\geq 2$.
Following \cite{ByrneDrasdo-2009, Ranftetal-2010},  we assume that growth is directly related to the pressure through a function $\vc{G}(\cdot)$ which satisfies
\begin{equation}
\vc{G}\in C^1(\R), \quad \vc{G}'(\cdot) \le -\beta< 0, \quad \vc{G}(P_M) =0 \quad \mbox{for some} \,\, P_M >0. \label{G-condition}
\end{equation}
The pressure $P_M$ is usually called {\em homeostatic pressure.}
Here, and in what follows, for simplicity we let
\begin{equation}
\vc{G}(p) = \alpha - \beta p^\theta, \label{G-example}
\end{equation}
for some $\alpha,\beta,\theta>0$.
\smallskip

\subsubsection{The tumor tissue as a porous medium}
The continuous motion of cells within the  tumor region, typically due to proliferation, 
 is represented by the velocity field $\vc{u}:=\Grad\W$ given by an alternative to Darcy's equation known as {\em Brinkman's equation}

\begin{equation}
 p  = \W - \mu \Delta \W \label{brinkman-u}
\end{equation}
where  $\mu$  is a positive constant describing the viscous like properties of tumor cells
and $p$ is the pressure given by \eqref{pressure-p}.


Relation \eqref{brinkman-u} consists of  two terms. The first term is the usual Darcy's law, which in the  present setting describes the tendency of cells to move down pressure gradients and results from  the friction of the tumor cells with the extracellular matrix.  The second term, on the other  hand, is a  dissipative force density (analogous to the Laplacian term that appears in the Navier-Stokes equation) and results from the internal cell friction due to cell volume changes.
A second interpretation of   relation \eqref{brinkman-u} is the tumor tissue can be viewed as ``fluid like." In other words,   the tumor cells flow through the fixed extracellular matrix like a flow through a porous medium, obeying Brinkman's law.

The resulting model, governed by the transport equation \eqref{cell-n}  for the population density of cells,  the elliptic equation \eqref{brinkman-u} for the velocity field and a state equation for the pressure law \eqref{pressure-p}, now reads
\begin{equation}
\begin{cases} \label{HeleShaw}
&\partial_t \n - \Div(\n  \Grad \W) =  \alpha  \n- \beta \n^{\gamma\theta +1}, \quad x \in \dom, \,\, t \ge 0 \\
&    -\mu \Delta \W + \W = a  \n^{\gamma}.
\end{cases}
\end{equation}
We complete the system \eqref{HeleShaw} with a family of initial data $n_0$ satisfying (for some constant $C$)
\begin{equation}
n_0 \ge 0, \quad p(n_0) \le P_M, \quad \|n_0\|_{L^1(\R^d)} \le C. \label{IC}
\end{equation}

The objective of this work is to establish the global existence of  weak solutions to the nonlinear model for tumor growth \eqref{HeleShaw} by  designing an efficient numerical scheme for its approximation and by showing  that this scheme converges when the mesh is refined. The main ingredients of our approach and contribution to the existing theory  include:

\begin{enumerate}[\quad$\bullet$]
\item The introduction of a suitable notion of solutions to the nonlinear system \eqref{HeleShaw} consisting of the transport equation \eqref{cell-n} and the  Brinkman regularization \eqref{brinkman-u}. 

\item The construction of an approximating procedure which relies on an artificial vanishing viscosity approximation and the establishment of the suitable compactness in order to pass into the limit and to conclude convergence to the original system (cf. Section \ref{S3}, Lemma 3.7).

\item  The design of an efficient numerical scheme for the numerical  approximation of the nonlinear system \eqref{cell-n}-\eqref{brinkman-u}. 
\item The proof of the convergence of the numerical scheme. In the center of the analysis lies the proof of the strong convergence of the cell densities. This is achieved  by establishing the weak continuity of the {\em effective viscous pressure} in the spirit of Lions \cite{Lions-1998} (cf. Section \ref{S4}, Lemma 4.8).

\item The design of  numerical experiments in order to establish  that
the finite difference scheme is effective in computing approximate solutions
to the nonlinear system \eqref{HeleShaw} (cf. Section 5).
\end{enumerate}

For relevant results on the analysis and the  numerical approximation of a two-phase flow model in porous media we refer the reader to \cite{CocliteMishraRisebroWeber-2014}. Related results on the numerical approximation of compressible fluids employing the weak compactness tools developed by of Lions~\cite{Lions-1998} in the discrete setting have been established by Karper \emph{et al.} \cite{Karper2013NumMath,KarlsenKarper2010,KarlsenKarper2011,KarlsenKarper2012} and Gallou\"{e}t \emph{et al.} \cite{EymardGallouet2010}.

Relevant work on the mathematical analysis of mechanical models of Hele-Shaw-type have been presented by
Perthame \emph{et al.} \cite{PerthameQTV-2014,  PerthameQV-2014,  PerthameTV-2014, PerthameV-2014}. The analysis in \cite{PerthameTV-2014} establishes the existence of traveling wave solutions of the Hele-Shaw model of tumor growth with nutrient 
and presents numerical observations in two space dimensions. The present article is according to our knowledge the first article presenting rigorous analytical results on the global existence of general weak solutions  to  Hele-Shaw-type systems.  

A different approach yielding results on the global existence of weak solutions to  a nonlinear model for tumor growth in a general moving domain $\Omega_t \subset \R^3$ without any symmetry assumption and for finite large initial data is presented in  \cite{DT-MixedModelJMFM-2014, DT-VariableDensity-2014, DT-DrugApplication-2014}.
But in contrast to the present nonlinear system, the transport equation for the evolution of cancerous cells in   \cite{DT-MixedModelJMFM-2014, DT-DrugApplication-2014} has a source term which is linear with respect to cell density.

Relevant results on nonlinear models for tumor growth governed by the Darcy's law for the evolution of the velocity field are presented 
by Zhao \cite{Zhao-2010} based on the farmework introduced  by Friedman {\em et al.}  \cite{Friedman-2004, ChenFriedman-2013}.
\subsection{Outline}
The paper is organized as follows: Section \ref{S1} presents the motivation, modeling  and   introduces the necessary preliminary material. Section \ref{S2}  provides a weak formulation of the problem and states the main result. Section \ref{S3}  is devoted to the global existence of solutions via a vanishing viscosity approximation. 
In Section \ref{S4}  we present an efficient finite difference scheme for the approximation of the weak solution to system \eqref{HeleShaw} on rectangular domains and
 Section \ref{S5} is devoted to numerical experiments.  A  discretized Aubin-Lions lemma  and some technical lemmas are presented in Appendices A and B respectively.

\section{Weak formulation and main results}\label{S2}
\begin{notation}
For $\varphi:\Dom\rightarrow \R$, $\vc{\varphi}:\Dom\rightarrow\R^d$, we will denote by $\Grad \varphi:=\Grad_x\varphi=(\partial_{x_1}\varphi,\dots,\partial_{x_d}\varphi)$ and $\Div\vc{\varphi}:=\Div_x\vc{\varphi}=\sum_{i=1}^d \partial_{x_i}\vc{\varphi}^{(i)}$ the gradient and divergence in the spatial direction in $\dom$.
\end{notation}
\subsection{Weak solutions}
\begin{definition}\label{D2.1}
Let $\dom$ a bounded domain in $\R^d$, $d=2,3$, which is either rectangular or has a smooth boundary $\partial\dom$ and $T>0$ a finite time horizon. We say that $(\n,\W, p)$ is a weak solution of problem \eqref{cell-n}-\eqref{brinkman-u} supplemented with initial data $(n_0,  \W_0, p_0)$
satisfying (\ref{IC}) provided that the following hold:
\vspace{0.1in}

$\bullet$ $(\n,\W, p) \ge 0$ represents a weak solution of \eqref{cell-n}-\eqref{brinkman-u} on $\Dom$, i.e., for any test function $\varphi \in C^{\infty}_c ([0,T]\times\R^d), T>0$, 
the  following integral relations hold
\smallskip

\begin{equation}
{\int_{\R^d}\!  \n \varphi(\tau,\cdot) \, dx  -\! \int_{\R^d}\!  \n_0 \varphi(0,\cdot)dx =}
{ \int_0^{\tau} \!\!\int_{\R^d} \!\!\left( n \partial_t \varphi - n \Grad \W \cdot \Grad \varphi + \n\vc{G}(p)  \varphi(t, \cdot) \right) dx dt}.
 \label{weak-n}
\end{equation}
%
In particular, 
$$n \in L^p(\Dom),  \,\, \mbox{for all}\,\, p \ge 1. $$ 

We remark that in  the weak formulation, it is convenient that the equations \eqref{cell-n} hold in the whole space $\mathbb{R}^d$ provided that the densities $n$ are extended to be zero outside the tumor domain.
\smallskip

$\bullet$ Brinkman's equation \eqref{brinkman-u} holds in the sense of distributions, i.e., for any test function 
${\varphi} \in C^{\infty}_c(\R^d)$ satisfying 
$${\varphi}|_{\partial \dom} = 0\,\, \mbox{for any}\,\, t \in [0,T],$$ 
the following integral relation holds for a.e. $t\in [0,T]$,
\begin{equation}
\int_{\dom} a n^{\gamma} {\varphi} \, dx=
 \int_{\dom} \Big( \mu \Grad \W  \cdot \Grad {\varphi} +\W {\varphi}   \Big) dx.
\label{w-pressure2}
\end{equation}
and $p=\n^\gamma$ almost everywhere. All quantities in \eqref{w-pressure2} are required to be integrable, and in particular, $\W \in L^{\infty}([0,T];H^{2}(\dom)).$
\end{definition}

The main result of the article now follows. 

\begin{theorem}\label{T2.2}
Let $\dom\subset\R^d$ be a bounded domain with smooth boundary $\partial\dom$, $0<T<\infty$. Assume that the initial data $\n_0\in L^{\infty}(\dom)$ with $0\leq\n_0\leq \n_\infty:=P_M^{1/\gamma}$ and that $\G(\cdot)$ is of the form \eqref{G-example}. Then the problem \eqref{cell-n}-\eqref{brinkman-u},   admits a weak solution in the sense specified in Definition \ref{D2.1}. 
\end{theorem}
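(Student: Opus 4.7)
The plan is to establish existence via an artificial vanishing viscosity regularization of the transport equation \eqref{cell-n}, derive uniform a priori bounds, and pass to the limit by a compactness argument in the spirit of Lions \cite{Lions-1998}.

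First, I would fix $\varepsilon > 0$ and consider the regularized system
\begin{equation*}
\partial_t \n_\varepsilon - \Div(\n_\varepsilon\,\Grad \W_\varepsilon) - \varepsilon \Delta \n_\varepsilon = \n_\varepsilon\,\G(p_\varepsilon), \qquad -\mu \Delta \W_\varepsilon + \W_\varepsilon = a\,\n_\varepsilon^\gamma,
\end{equation*}
with homogeneous Neumann boundary conditions on $\n_\varepsilon$ and homogeneous Dirichlet conditions on $\W_\varepsilon$. Existence of $(\n_\varepsilon,\W_\varepsilon)$ on $[0,T]$ would be obtained by a Schauder fixed-point argument: given $\widetilde{\n}_\varepsilon$ in a suitable ball of $L^2(\Dom)$, I would solve the Brinkman equation by standard elliptic theory to produce $\W_\varepsilon$, then solve the resulting linear uniformly parabolic equation for $\n_\varepsilon$ with prescribed drift $\Grad \W_\varepsilon$.

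Second, I would extract $\varepsilon$-uniform bounds. The key pointwise estimate $0\le \n_\varepsilon(t,x)\le \n_\infty$ follows from a maximum-principle argument exploiting $\G(a s^\gamma) \le 0$ for $s \ge \n_\infty$ together with the identity $\Delta \W_\varepsilon = (\W_\varepsilon - a\n_\varepsilon^\gamma)/\mu$ used to rewrite $\Div(\n_\varepsilon \Grad \W_\varepsilon) = \Grad\n_\varepsilon \cdot \Grad \W_\varepsilon + \n_\varepsilon(\W_\varepsilon - a\n_\varepsilon^\gamma)/\mu$. Elliptic regularity for Brinkman then yields $\W_\varepsilon$ uniformly bounded in $L^\infty(0,T;H^2(\dom))$, while the transport equation provides $\partial_t \n_\varepsilon$ bounded in $L^2(0,T;H^{-1}(\dom))$. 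From these, extract subsequences with $\n_\varepsilon \weakstar \n$ in $L^\infty(\Dom)$, $\W_\varepsilon \weak \W$ in $L^\infty(0,T;H^2(\dom))$ (hence $\Grad \W_\varepsilon \to \Grad \W$ strongly in $L^2$ by Rellich), and $\n_\varepsilon^\gamma \weakstar \overline{\n^\gamma}$; passage to the limit in Brinkman's equation yields $-\mu \Delta \W + \W = a\overline{\n^\gamma}$.

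The hard part will be identifying $\overline{\n^\gamma} = \n^\gamma$, equivalently obtaining strong convergence of $\n_\varepsilon$ in $L^p(\Dom)$ for every $p<\infty$; the weak convergence of $\Grad \W_\varepsilon$ alone is not enough to pass to the limit in the nonlinear drift $\n_\varepsilon \Grad \W_\varepsilon$ nor in $\n_\varepsilon \G(p_\varepsilon) = \alpha \n_\varepsilon - \beta a^\theta \n_\varepsilon^{\gamma\theta+1}$. To resolve this I would employ the effective-viscous-flux strategy: the Brinkman equation identifies the effective pressure $p_\varepsilon + \mu \Delta \W_\varepsilon$ with the more regular quantity $\W_\varepsilon$, so applying DiPerna--Lions renormalization to both the approximate transport equation (giving an equation for $\n_\varepsilon^\gamma$) and the limiting transport equation (giving an equation for $\n^\gamma$), then using the algebraic substitution $\mu\Delta \W_\varepsilon = \W_\varepsilon - a\n_\varepsilon^\gamma$ to eliminate $\Delta \W_\varepsilon$ from both, one obtains a differential inequality for the defect measure $\overline{\n^{2\gamma}} - \overline{\n^\gamma}\cdot\n^\gamma \ge 0$. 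A Gronwall argument combined with the convexity inequality $\overline{\n^{2\gamma}} \ge (\overline{\n^\gamma})^2$ and the monotonicity of $s \mapsto s^\gamma$ forces this defect to vanish, whence $\n_\varepsilon \to \n$ strongly. This allows passage to the limit in all nonlinear terms and recovers the weak formulation of Definition \ref{D2.1}.
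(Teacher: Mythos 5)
Your proposal follows the paper's Section~\ref{S3} argument almost step for step: artificial vanishing viscosity, a maximum-principle proof of the uniform $L^\infty$ bound $0\le \n_\e\le \n_\infty$ using the substitution $\mu\Delta\W_\e=\W_\e-a\n_\e^\gamma$ to write the drift in non-conservative form, elliptic regularity to get $\W_\e$ bounded in $L^\infty(0,T;H^2)$, DiPerna--Lions renormalization of the transport equation, and then Lions' effective-viscous-flux identity (the content of Lemma~\ref{lem:effve}) together with the Novotn\'y--Stra\v{s}kraba monotonicity inequality and Gr\"onwall to force the defect to vanish and extract strong convergence. The only cosmetic differences are your choice of Dirichlet rather than Neumann boundary conditions for $\W_\e$, and your use of the renormalization $b(\n)=\n^\gamma$ where the paper works with $b(\n)=\n^2$; both choices work under $\gamma\ge 2$ and lead to the same Gr\"onwall structure, and the relevant monotonicity inequality you need is $\overline{\n^{2\gamma}}\ge \n^\gamma\,\overline{\n^\gamma}$ (slightly sharper than the Cauchy--Schwarz-type $\overline{\n^{2\gamma}}\ge(\overline{\n^\gamma})^2$ you cite, but it is the one that appears on the favorable-sign side).

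One step you slide past is the \emph{strong} space--time convergence of $\Grad\W_\e$, which you attribute to ``Rellich.'' Rellich gives compactness of $H^2(\dom)\hookrightarrow H^1(\dom)$ at each fixed time, but strong convergence in $L^2(\Dom)$ requires control of $\partial_t\W_\e$ (Aubin--Lions). The paper obtains this from the entropy identity applied to $f(\n)=|\n|^\gamma$, which decomposes $\partial_t p_\e$ as an $L^1$ part plus an $L^1(H^{-1})$ part and then transfers this to $\partial_t\W_\e$ via elliptic theory with $L^1$ right-hand side. Your proposal mentions a bound on $\partial_t\n_\e$ in $L^2(0,T;H^{-1})$, but the term $\e\Delta\n_\e$ is only controlled after the entropy estimate produces $\sqrt{\e}\,\Grad\n_\e\in L^2$, so the entropy step is not optional---it is exactly what feeds the compactness-in-time argument that the effective-viscous-flux identity relies on.
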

The following two remarks are now in order.
\begin{remark}
In Section \ref{S3}, such a solution is  obtained as the limit of the vanishing viscosity approximations   $(\n_\e,\W_\e,p_\e)$ of  \eqref{HeleShaw-appr}  to \eqref{HeleShaw} as $\e\rightarrow 0$. 
\end{remark}
\begin{remark}
In Section \ref{S4}, such a solution  is obtained in the case of a rectangular domain,  as the limit of the sequence of approximations $(\n_h,\W_h,p_h)$ computed by the numerical scheme \eqref{eq:discv} -- \eqref{eq:fluxes} as $h\rightarrow 0$. 
\end{remark}

%
%
\section{Global existence via vanishing viscosity}\label{S3}
In this section we prove Theorem \ref{T2.2} by constructing an approximating scheme which relies on the addition of an artificial vanishing viscosity approximation
\begin{equation}
\begin{cases} \label{HeleShaw-appr}
&\partial_t \n_\e - \Div(\n_\e  \Grad\W_\e) =  \alpha  \n_\e- \beta \n^{\gamma +1}_\e + \e \Delta \n_\e, \quad x \in \dom, \,\, t \ge 0 \\
&    \mu \Delta \W_\e - \W_\e = a \n_\e^{\gamma},\\
&\n_\e(0,\cdot)=\n_0^\e,
\end{cases}
\end{equation}
where $\n_0^\e$ is a smoothnened version of $\n_0$, that is $\n_0^\e=\n_0\ast\varphi_\e$ for a smooth function $\varphi_\e$ with compact support, and a bounded domain $\dom\in\R^d$ with smooth boundary or alternatively the $d$-dimensional torus $\mathbb{T}^d$, and we establish its convergence to the nonlinear system  \eqref{HeleShaw}  at the continuous  level. For simplicity, we assume $a=1$ and homogeneous Neumann boundary conditions for $\n_\e$ and $\W_\e$ (if the domain is a torus $\mathbb{T}^d$ we can also use periodic boundary conditions).
 \begin{theorem}
 For every $\e > 0$, the parabolic-elliptic system \eqref{HeleShaw-appr} admits a unique smooth solution $(\n_\e, \W_\e, p_\e)$.  
 \end{theorem}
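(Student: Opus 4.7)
The plan is to establish existence, uniqueness, and smoothness through a fixed-point argument for the coupling between the transport–diffusion equation for $\n$ and the elliptic equation for $\W$, combined with a maximum-principle estimate that yields a global-in-time $L^\infty$ bound. For any given $n \in L^\infty(\dom)$, the elliptic Brinkman equation in \eqref{HeleShaw-appr} with homogeneous Neumann data admits a unique solution $\W \in H^2(\dom)$ by Lax--Milgram together with standard elliptic regularity. $L^p$-theory then upgrades this to $\W \in W^{2,p}(\dom)$ for every $p < \infty$, so that $\Grad \W \in C^{0,\alpha}(\overline{\dom})$ with bounds depending only on $\|n\|_{L^\infty}$. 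This defines a continuous and compact mapping $n \mapsto \W$.

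For given $\W$ with $\Grad \W$ Hölder continuous, I would solve the nonlinear parabolic equation
\begin{equation*}
\partial_t \tilde n - \e \Delta \tilde n - \Div(\tilde n \Grad \W) = \alpha \tilde n - \beta \tilde n^{\gamma+1}
\end{equation*}
with initial datum $n_0^\e$ and homogeneous Neumann boundary conditions by standard quasilinear parabolic theory (Galerkin approximation, or the Ladyzhenskaya--Solonnikov--Uralceva framework), which applies because the drift is bounded and Hölder continuous and the reaction $\alpha n - \beta n^{\gamma+1}$ is locally Lipschitz. Composing the two steps yields a map $\mathcal{T}\colon n \mapsto \tilde n$; Schauder's fixed-point theorem applied on a closed convex subset of $L^2((0,T)\times\dom)$ consisting of nonnegative functions bounded by $n_\infty := P_M^{1/\gamma}$ then produces a fixed point $(\n_\e,\W_\e)$, and we set $p_\e := \n_\e^{\gamma}$.

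The crucial invariance $0 \leq \tilde n \leq n_\infty$ is verified by the maximum principle: nonnegativity is preserved because the reaction vanishes at $n=0$, and the upper bound follows from the identity $\alpha n_\infty - \beta n_\infty^{\gamma+1} = n_\infty \vc{G}(P_M) = 0$, so that the constant $n_\infty$ is a supersolution of the parabolic equation. Once the fixed point is obtained, smoothness follows from a standard bootstrap: $\n_\e \in L^\infty \Rightarrow \W_\e \in W^{2,p} \Rightarrow \Grad \W_\e \in C^{0,\alpha} \Rightarrow \n_\e \in C^{2+\alpha,\,1+\alpha/2}$ by Schauder theory for linear parabolic equations; iteration together with the smoothness of $n_0^\e$ and $\vc{G}$ yields $C^\infty$ regularity. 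Uniqueness follows from testing the difference of two solutions in $L^2$ against itself, using the Lipschitz dependence of $\Grad \W$ on $n$ and Grönwall's inequality.

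The main difficulty I anticipate is arranging the fixed-point iteration so that the pointwise bound $\tilde n \leq n_\infty$ is actually preserved by $\mathcal{T}$: this invariance relies on the specific cancellation $\vc{G}(P_M) = 0$ and forces one to retain the full nonlinearity $\alpha \tilde n - \beta \tilde n^{\gamma+1}$ at the parabolic step, since a fully linearized scheme would lose the supersolution property. A secondary technical point is ensuring compatibility between the smoothed initial data $n_0^\e$ and the Neumann boundary condition in order to obtain classical regularity all the way up to $t=0$; this can be arranged by an appropriate choice of the mollifier $\varphi_\e$.
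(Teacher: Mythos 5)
The paper's proof of this theorem is deliberately terse: it invokes the Contraction Mapping Principle for short-time existence and uniqueness, then a priori bounds (established in subsequent lemmas, in particular the $L^\infty$ bound of Lemma~\ref{lem:linfne}) to continue the solution globally, and a bootstrap for smoothness. Your approach differs in the fixed-point machinery: you use Schauder's theorem on an invariant convex set, which gives existence directly but then forces a separate $L^2$--Gr\"{o}nwall argument for uniqueness. Both are standard and both work; the contraction route is more economical in that it produces local uniqueness for free and defers the $L^\infty$ estimate to a later lemma, while your route builds the $L^\infty$ invariance directly into the existence step.

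There is, however, a gap in the way you justify that invariance. You assert that the constant $n_\infty = P_M^{1/\gamma}$ is a supersolution of the parabolic equation because the reaction term vanishes there, $\alpha n_\infty - \beta n_\infty^{\gamma+1} = n_\infty \vc{G}(P_M) = 0$. But substituting $\tilde n \equiv n_\infty$ into
\[
\partial_t \tilde n - \e\Delta\tilde n - \Div(\tilde n\Grad\W) = \alpha\tilde n - \beta\tilde n^{\gamma+1}
\]
leaves the nontrivial drift contribution $-\Div(n_\infty\Grad\W) = -n_\infty\Delta\W$ on the left, and $\Delta\W$ has no fixed sign, so the constant is \emph{not} a supersolution in general. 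What closes the argument -- and is what the paper does in Lemma~\ref{lem:linfne} -- is a pointwise maximum-principle computation: at a spatial maximum of $\n_\e$ the elliptic maximum principle for the Brinkman equation (including Hopf's lemma at the boundary) yields $\W_\e \leq p_\e$ there, hence $\Delta\W_\e = (\W_\e - p_\e)/\mu \leq 0$ at that point, and combined with $\vc{G}(p_\e)\leq 0$ when $p_\e\geq P_M$ one gets $\partial_t\n_\e \leq 0$ at the maximum. You should replace the constant-supersolution claim with this pointwise argument; otherwise the invariant set required by your Schauder scheme is not actually preserved.
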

\begin{proof}
The proof of this result relies on classical arguments  (cf.\ Ladyzhenskaya \cite{Ladyzhenskaya-1969}), namely by employing the Contraction Mapping Principle and the regularity of the initial data one can show the existence of a unique solution  $(\n_\e, \W_\e, p_\e)$  defined for a small time $T>0.$ Then one derives apriori estimates  establishing that the solution does not blow up and in fact is defined for every time. Finally, a bootstrap argument yields the smoothness of the solution.
\end{proof}
The remaining part of this section aims to establish the necessary compactness of the approximate sequence of solutions  $(\n_\e, \W_\e, p_\e).$
\subsection{A priori estimates}
We start by proving that $\n_\e$ are uniformly bounded independent of $\e>0$ and nonnegative:
\begin{lemma}\label{lem:linfne}
If $0\leq \n_\e(0,\cdot)\leq \n_{\infty}:=P_M^{1/\gamma}<\infty$  uniformly in $\e>0$, then for any $t>0$, the functions $\n_\e(t,\cdot)$ are uniformly {\normalfont(}in $\e>0${\normalfont)} bounded and nonnegative, specifically,
\begin{equation*}
0\leq \min_{(t,x)}\n_\e(t,x)\leq \max_{(t,x)} \n_{\e}(t,x)\leq \n_{\infty}.
\end{equation*}
\end{lemma}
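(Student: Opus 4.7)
The plan is a classical pointwise maximum/minimum principle, exploiting that the preceding theorem provides a smooth solution $(\n_\e, \W_\e)$ of \eqref{HeleShaw-appr}, and leveraging the coupling with the elliptic Brinkman equation via its own comparison principle. I would track the extreme values
$$M_\e(t) := \max_{x\in\cOm}\n_\e(t,x), \qquad m_\e(t) := \min_{x\in\cOm}\n_\e(t,x),$$
and show that $\n_\infty = P_M^{1/\gamma}$ and $0$ act as barriers. The algebraic fact that powers the estimate is that $\G(P_M)=0$ with $\theta=1$ reads $\alpha = \beta P_M = \beta\n_\infty^\gamma$, so the source term factors as
$$\alpha\n - \beta\n^{\gamma+1} = \beta\n\bigl(\n_\infty^\gamma - \n^\gamma\bigr),$$
vanishing exactly at $\n=0$ and $\n=\n_\infty$ with the correct sign in between.

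For the upper bound, fix a time $t>0$ at which $M_\e$ is differentiable (by Danskin's envelope theorem this holds a.e.) and let $x_0\in\cOm$ realize the maximum. Then $M_\e'(t) = \Pt \n_\e(t,x_0)$, and the first-order conditions read $\Grad\n_\e(t,x_0) = 0$ and $\Delta\n_\e(t,x_0)\leq 0$ (the latter via Hopf's lemma when $x_0\in\pOm$, using the homogeneous Neumann condition). Decomposing the transport term
$$\Div(\n_\e\Grad\W_\e) = \Grad\n_\e\cdot\Grad\W_\e + \n_\e\Delta\W_\e,$$
the first piece vanishes at $x_0$, and substituting the Brinkman equation $-\mu\Delta\W_\e + \W_\e = \n_\e^\gamma$ yields
$$\n_\e(t,x_0)\,\Delta\W_\e(t,x_0) = \frac{M_\e(t)\bigl(\W_\e(t,x_0) - M_\e(t)^\gamma\bigr)}{\mu}.$$
The elliptic maximum principle applied to the Brinkman equation (zeroth-order coefficient $+1$, nonnegative right-hand side $\leq M_\e(t)^\gamma$) gives $0\leq \W_\e \leq M_\e(t)^\gamma$ on $\cOm$, so this quantity is nonpositive once $M_\e(t)\geq 0$. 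Assembling the signs in \eqref{HeleShaw-appr} at $(t,x_0)$ yields the scalar differential inequality
$$M_\e'(t) \leq \beta M_\e(t)\bigl(\n_\infty^\gamma - M_\e(t)^\gamma\bigr),$$
and since the mollified initial data satisfies $M_\e(0) \leq \|\n_0\ast\varphi_\e\|_{L^\infty}\leq \|\n_0\|_{L^\infty}\leq \n_\infty$, ODE comparison against the stable equilibrium $\n_\infty$ gives $M_\e(t)\leq \n_\infty$ uniformly in $\e$.

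The lower bound is symmetric. At a minimizer $x_0$, $\Grad\n_\e(t,x_0)=0$, $\Delta\n_\e(t,x_0)\geq 0$ (so the viscosity contribution $\e\Delta\n_\e\geq 0$ works in our favor), and $\n_\e\Delta\W_\e(t,x_0) = m_\e(t)(\W_\e(t,x_0) - m_\e(t)^\gamma)/\mu$. The key observation is that at $m_\e(t)=0$ every term on the right of \eqref{HeleShaw-appr} is nonnegative: the source and transport contributions both vanish because they carry a factor of $\n_\e$. Hence $m_\e'(t)\geq 0$ whenever $m_\e(t)=0$, which together with $m_\e(0)\geq 0$ precludes $m_\e$ from becoming negative.

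The main technical obstacle is the rigorous handling of the pointwise maximum: justifying almost-everywhere differentiability of $M_\e, m_\e$ and treating extrema on $\pOm$ via Hopf's lemma under the Neumann condition. If I wanted to bypass these niceties entirely, an equivalent route is to multiply \eqref{HeleShaw-appr} by $(\n_\e - \n_\infty)_+$ (respectively by $-(\n_\e)_-$), integrate by parts using the Brinkman equation to control the $\n_\e\Delta\W_\e$ term, and derive a Gronwall inequality forcing $\int_\dom (\n_\e - \n_\infty)_+^2\,\dx$ (respectively $\int_\dom(\n_\e)_-^2\,\dx$) to stay identically zero. Either route delivers the desired uniform-in-$\e$ bounds required for the compactness arguments of the next subsection.
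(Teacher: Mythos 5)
Your proposal captures the paper's proof of the upper bound in its essential mechanism: both establish the elliptic comparison estimate $\min_{(t,x)} p_\e \le \W_\e \le \max_{(t,x)} p_\e$ for the Brinkman equation, then work at a spatial maximizer of $\n_\e(t,\cdot)$ where $\Grad\n_\e = 0$, $\Delta\n_\e \le 0$, and
\begin{equation*}
\n_\e\Delta\W_\e = \frac{\n_\e\bigl(\W_\e - p_\e\bigr)}{\mu} \le 0,
\end{equation*}
so that the sign of the source term drives the maximum back below $\n_\infty$. You are somewhat more explicit than the paper in organizing this around the scalar envelope $M_\e(t)$, its a.e.\ differentiability, and ODE comparison against the equilibrium $\n_\infty$ (and you make the factorization $\alpha\n - \beta\n^{\gamma+1} = \beta\n(\n_\infty^\gamma - \n^\gamma)$ explicit, which matches the implicit choice $\theta=1$ in \eqref{HeleShaw-appr}), but the underlying argument is the same.

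The treatments of nonnegativity genuinely diverge. The paper multiplies \eqref{eq:noncons} by a regularized sign function, integrates, and derives
\begin{equation*}
\frac{d}{dt}\int_\dom \bigl||\n_\e| - \n_\e\bigr|\,\dx \le \max_{s\in[0,P_M]}|\G(s)|\int_\dom \bigl||\n_\e| - \n_\e\bigr|\,\dx,
\end{equation*}
closing with Gr\"{o}nwall and zero initial data. You instead run a barrier argument at the minimum: $m_\e'(t)\ge 0$ at $m_\e(t)=0$ because every right-hand-side term carries a factor of $\n_\e$ or is nonnegative at the minimizer. That is the right observation, but as stated it only controls the derivative at the boundary of the admissible region and does not by itself preclude $m_\e$ from dipping negative (a vanishing derivative at the barrier is insufficient without further information). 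To make it airtight you would want a one-sided estimate of the form $m_\e'(t)\ge -C\,m_\e(t)^-$, or a comparison against $\n_\e + \delta e^{Ct}$ with $\delta\to 0$, followed by Gr\"{o}nwall; your own suggested alternative (testing against $-(\n_\e)_-$ and integrating by parts) does close cleanly and is essentially equivalent in spirit to the paper's $L^1$/Kruzhkov route.
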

\begin{proof}
First we notice that if $\W_\e$ has a maximum at a point $x_0$, then $\Delta\W_\e(\cdot,x_0)\leq 0$ and therefore $\W_\e=p_\e+\mu\Delta\W_\e\leq p_\e$. Similarly, if it has a minimum at a point $x_0$, it will satisfy $\Delta\W_\e(\cdot,x_0)\geq 0$ and therefore $\W_\e\geq p_\e$.
If $\W_\e$ attains a strict maximum on the boundary, i.e., there is a point $x_0\in \partial\dom$ such that $\W_\e(x_0)>\W_\e(x)$ for any other $x\in\dom$, we apply Hopf's Lemma, e.g. \cite[p. 347]{Evans2010}, to the function $v:=\W_\e-\max_{(t,x)} p_\e(t,x)$ which satisfies
\begin{equation*}
-\mu\Delta v+v=p_\e-\max_{(t,x)} p_\e(t,x)\leq 0,
\end{equation*}
 which has a strict maximum at the point $x_0$. If $v(x_0)\leq 0$, then $\W_\e\leq \W_\e(x_0)\leq \max_{(t,x)} p_\e(t,x)$ and otherwise Hopf lemma gives $\Grad \W_\e(x_0)\cdot \nu=\Grad v(x_0)\cdot \nu>0$ where we have denoted the boundary normal $\nu$, this contradicts the homogeneous boundary conditions. In a similar way we show that $\W_\e\geq \min_{(t,x)} p_\e(t,x)$ (applying Hopf's lemma to $-\W_\e$ and hence 
\begin{equation}\label{eq:maxW}
\min_{(t,x)} p_\e(t,x)\leq \W_\e \leq \max _{(t,x)} p_\e(t,x).
\end{equation}
We rewrite the evolution equation for $\n_\e$ using the equation for the potential $\W_\e$,
\begin{equation}\label{eq:noncons}
\partial_t \n_{\e} - \Grad\W_\e\cdot \Grad\n_\e= \n_\e \G(p_\e)+\frac{1}{\mu}\n_\e (p_\e - \W_\e)+\e\Delta\n_\e.
\end{equation} 
Now assume $(t_0,x_0)$ is a point, where $\n_\e(t_0,x_0)\geq \n_\infty$  reaches its maximum (and therefore also $p_\e(t_0,x_0) \geq P_M$ reaches a maximum). Then 
$\Grad \n_\e(t_0,x_0)=0$ and $\Delta \n_\e(t_0,x_0)\leq 0$. Hence
\begin{equation*}
\partial_t \n_{\e}(t_0,x_0)\leq \n_\e \G(p_\e)+\frac{1}{\mu}\n_\e (p_\e - \W_\e).
\end{equation*} 
By \eqref{eq:maxW}, the second term on the right hand side is nonpositive  and since $\G(p_\e(t_0,x_0))\leq 0$ for $p_\e\geq P_M$, we get
\begin{equation*}
\partial_t \n_{\e}(t_0,x_0)\leq 0.
\end{equation*} 
Hence $\n_\e$ will decrease and if initially $\n_0\leq \n_\infty$, this implies that $\n_\e(t,\cdot)\leq \n_\infty$ for any later time $t\geq 0$. To show the nonnegativity of $\n_\e$, we integrate the evolution equation for $\n_\e$, 
\begin{equation*}
\frac{d}{dt}\int_{\dom}\n_\e dx=\int_{\dom}\n_\e\G(p_\e) dx.
\end{equation*}
On the other hand, multiplying the same equation by a regularized version of the sign function, integrating and then passing to the limit in the approximation, we have
\begin{equation*}
\frac{d}{dt}\int_{\dom}|\n_\e| dx\leq\int_{\dom}|\n_\e|\G(p_\e) dx,
\end{equation*}
Subtracting the two equations from one another, and using that $|\n_\e|-\n_\e\geq 0$,
\begin{equation*}
\begin{split}
\frac{d}{dt}\int_{\dom}\big||\n_\e|-\n_\e\big| dx&\leq\int_{\dom}\big||\n_\e|-\n_\e\big|\G(p_\e) dx,\\
&\leq \max_{s\in[0,P_M]}|\G(s)|\int_{\dom}\big||\n_\e|-\n_\e\big| dx.
\end{split}
\end{equation*}
Now using Gr\"{o}nwall's inequality and that $|\n_0|-\n_0\equiv 0$ by assumption, we obtain
$$
\int_{\dom}\big||\n_\e|-\n_\e\big|(t) dx=0
$$
and thus that $\n_\e(t,x)\geq 0$ almost everywhere.
%
\end{proof}
%
Next we prove a simple lemma on the regularity of $\W_\e$.
\begin{lemma}\label{lem:We}
We have that 
$$
\W_\e\subset L^\infty([0,T];W^{2,q}(\dom)),
$$ for any $q\in[1,\infty)$ uniformly in $\e>0$ and 
$$
\W_\e, \Delta \W_\e\subset L^\infty(\Dom)),
$$
uniformly in $\e>0$ as well.
\end{lemma}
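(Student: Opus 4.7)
The plan is to exploit the fact that the Brinkman equation in \eqref{HeleShaw-appr} is a linear elliptic equation with a uniformly bounded right-hand side, so all bounds follow from classical elliptic regularity (Agmon--Douglis--Nirenberg / Calderón--Zygmund) applied to $-\mu\Delta W_\e + W_\e = -p_\e$ with homogeneous Neumann boundary conditions on the smooth domain $\dom$.

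First I would invoke Lemma \ref{lem:linfne}, which gives $0\le n_\e\le n_\infty=P_M^{1/\gamma}$ uniformly in $\e$. Since $p_\e=n_\e^\gamma$, this immediately yields
\begin{equation*}
\|p_\e\|_{L^\infty(\Dom)}\le P_M
\end{equation*}
independently of $\e$. Next, the maximum-principle bound \eqref{eq:maxW} already proved in Lemma \ref{lem:linfne} provides $\|W_\e\|_{L^\infty(\Dom)}\le P_M$ uniformly in $\e$, which takes care of the $L^\infty$ bound on $W_\e$ itself.

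Then I would apply the standard $L^q$ elliptic regularity estimate to the elliptic equation for $W_\e$: for any $q\in[1,\infty)$ and smooth $\partial\dom$ (or periodic/torus setting), there is a constant $C=C(q,\dom,\mu)$ such that
\begin{equation*}
\|W_\e\|_{W^{2,q}(\dom)}\le C\bigl(\|p_\e\|_{L^q(\dom)}+\|W_\e\|_{L^q(\dom)}\bigr).
\end{equation*}
The right-hand side is bounded uniformly in $\e$ and $t$ by the preceding $L^\infty$ estimates together with the boundedness of $\dom$, hence $W_\e\in L^\infty([0,T];W^{2,q}(\dom))$ uniformly in $\e$. For the pointwise bound on $\Delta W_\e$, I would simply rearrange the elliptic equation,
\begin{equation*}
\mu\Delta W_\e = W_\e - p_\e,
\end{equation*}
and observe that both terms on the right are in $L^\infty(\Dom)$ with bounds independent of $\e$, giving $\Delta W_\e\in L^\infty(\Dom)$ uniformly.

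There is no genuine obstacle here; the only care needed is to make sure the elliptic regularity theorem is quoted in the correct form for homogeneous Neumann data on a smooth bounded domain (or with periodic boundary conditions on $\mathbb{T}^d$), and to note that the Neumann compatibility condition is not required because the zero-order term $W_\e$ is present. Everything else is a direct consequence of Lemma \ref{lem:linfne} and standard linear elliptic theory.
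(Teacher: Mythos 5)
Your proposal is correct and takes essentially the same approach as the paper: uniform $L^\infty$ bound on $p_\e$ from Lemma~\ref{lem:linfne}, the maximum-principle bound \eqref{eq:maxW} for $\W_\e$, elliptic regularity for $W^{2,q}$, and rearranging the equation for the $L^\infty$ bound on $\Delta\W_\e$. The only presentational difference is that the paper first derives the $H^2$ bound explicitly via a squaring/integration-by-parts energy identity before invoking Calder\'on--Zygmund to reach $W^{2,q}$, whereas you quote the full $L^q$ elliptic estimate directly.
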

\begin{proof}
We square the equation for $\W_\e$ and integrate it over the spatial domain and then use integration by parts,
\begin{equation*}
\begin{split}
\int_{\dom} |p_\e|^2dx&=\int_{\dom} | \W_\e|^2-2\mu\W_\e\Delta\W_\e+\mu^2 |\Delta\W_\e|^2dx\\
&=\int_{\dom} | \W_\e|^2+2\mu|\Grad\W_\e|^2+\mu^2 |\Grad^2\W_\e|^2dx.
\end{split}
\end{equation*}
By the previous Lemma \ref{lem:linfne}, we have that $p_\e$ is uniformly bounded in $\e>0$ and therefore that the left hand side of the above equation is bounded and that $\W_\e\in L^\infty([0,T];H^2(\dom))$. Using a Calderon-Zygmund inequality (e.g. \cite[Thm. 9.11.]{gilbargtrudinger}), we obtain $\W_\e\in L^{\infty}([0,T]; W^{2,q}(\dom))$ for all $q\in [1,\infty)$. By the Sobolev embedding theorem, this implies that in particular $\Grad \W_\e\in L^{\infty}(\Dom)$. The second claim follows from \eqref{eq:maxW} and the uniform bound on the pressure proved in Lemma \ref{lem:linfne}.
\end{proof}
\subsection{Entropy inequalities for $\n_\e$}
To prove strong convergence of the approximating sequence $\{(\n_\e,\W_\e,p_\e)\}_{\e>0}$, it will be useful to derive entropy inequalities for $\n_\e$. To this end, the following lemma will be useful:
\begin{lemma}\label{lem:fn}
Let $f:\R\rightarrow \R$ be a smooth convex, nonnegative function and denote $f_\e:=f(\n_\e)$. Then $f_\e$ satisfies the following identity
\begin{multline}\label{eq:entie}
\partial_t f_\e-\Div(f_\e\Grad \W_\e)-\e\Delta f(\n_\e)\\
= (f'(\n_\e)\n_\e-f_\e)\Delta \W_\e+f'(\n_\e)\n_\e \G(p_\e)-\e f''(\n_\e) |\Grad\n_\e|^2
\end{multline}

where 
\begin{equation}\label{eq:dritte}
\begin{split}
\e\lint{f''(\n_\e) |\Grad\n_\e|^2}\leq C,
\end{split}
\end{equation}
with $C>0$ a constant independent of $\e>0$. In particular, this implies that $\partial_t f_\e=g_\e+k_\e$ with $g_\e\in L^1([0,T]\times \dom)$ and $k_\e\in L^1([0,T]; W^{-1,2}(\dom))$.
\end{lemma}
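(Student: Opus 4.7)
The plan is to multiply the parabolic equation in \eqref{HeleShaw-appr} by $f'(\n_\e)$ and rewrite each term via the chain rule, then integrate to harvest the dissipation bound, and finally read off the decomposition of $\partial_t f_\e$ directly from the resulting identity. No regularity beyond what Lemmas \ref{lem:linfne} and \ref{lem:We} give is required; the smoothness of $\n_\e$ established earlier justifies all the manipulations pointwise.

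For the identity \eqref{eq:entie}, I would compute term by term. Applying $f'(\n_\e)$ to the evolution equation and using $f'(\n_\e)\Pt \n_\e = \Pt f_\e$, $f'(\n_\e)\e\Delta \n_\e = \e\Delta f(\n_\e) - \e f''(\n_\e)|\Grad \n_\e|^2$, together with
$$f'(\n_\e)\Div(\n_\e \Grad \W_\e) = \Grad f_\e \cdot \Grad \W_\e + f'(\n_\e)\n_\e \Delta \W_\e = \Div(f_\e \Grad \W_\e) + (f'(\n_\e)\n_\e - f_\e)\Delta \W_\e,$$
rearranging gives \eqref{eq:entie} with no extra work.

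To obtain \eqref{eq:dritte}, I would integrate \eqref{eq:entie} over $[0,T]\times\dom$. The homogeneous Neumann (or periodic) boundary conditions kill the contributions of $\Div(f_\e\Grad \W_\e)$ and $\Delta f(\n_\e)$, leaving
$$\int_\dom f(\n_\e(T,\cdot))\,\dx - \int_\dom f(\n_0^\e)\,\dx + \e\lint{f''(\n_\e)|\Grad \n_\e|^2} = \lint{\bigl((f'(\n_\e)\n_\e - f_\e)\Delta \W_\e + f'(\n_\e)\n_\e \G(p_\e)\bigr)}.$$
The $L^\infty$ bound on $\n_\e$ from Lemma \ref{lem:linfne} controls $f(\n_\e)$, $f'(\n_\e)\n_\e - f_\e$, and $f'(\n_\e)\n_\e \G(p_\e)$ uniformly in $\e$, while Lemma \ref{lem:We} furnishes a uniform $L^\infty$ bound on $\Delta \W_\e$; since $f\geq 0$, absorbing the non-negative term $\int_\dom f(\n_\e(T,\cdot))\,\dx$ to the left yields \eqref{eq:dritte}.

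For the decomposition, I would take
$$g_\e := (f'(\n_\e)\n_\e - f_\e)\Delta \W_\e + f'(\n_\e)\n_\e \G(p_\e) - \e f''(\n_\e)|\Grad \n_\e|^2, \qquad k_\e := \Div(f_\e \Grad \W_\e) + \e\Delta f(\n_\e),$$
so that $\Pt f_\e = g_\e + k_\e$ by \eqref{eq:entie}. The first two terms of $g_\e$ are in $L^\infty(\Dom)$ uniformly in $\e$ by Lemmas \ref{lem:linfne}--\ref{lem:We}, while the third is in $L^1(\Dom)$ uniformly by \eqref{eq:dritte}, so $g_\e\in L^1([0,T]\times\dom)$. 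For $k_\e$, the bound $\Grad \W_\e\in L^\infty(\Dom)$ from Lemma \ref{lem:We} gives $f_\e \Grad \W_\e\in L^\infty(\Dom)$, hence $\Div(f_\e \Grad \W_\e)\in L^\infty([0,T];W^{-1,2}(\dom))\subset L^1([0,T];W^{-1,2}(\dom))$. The delicate piece, and what I expect to be the main bookkeeping obstacle, is the vanishing viscosity term $\e\Delta f(\n_\e) = \Div(\e f'(\n_\e)\Grad \n_\e)$: to place it in $L^1([0,T];W^{-1,2}(\dom))$ I would invoke \eqref{eq:dritte} a second time with the quadratic choice $f(s) = s^2$ to get $\sqrt{\e}\,\Grad \n_\e$ bounded in $L^2(\Dom)$, combine this with the uniform $L^\infty$ bound on $f'(\n_\e)$ to conclude $\|\e f'(\n_\e)\Grad \n_\e\|_{L^2(\Dom)}\leq C\sqrt{\e}$, and thus $\e\Delta f(\n_\e)\in L^2([0,T];W^{-1,2}(\dom))$ uniformly (in fact tending to zero), which completes the proof.
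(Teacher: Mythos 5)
Your proof is correct and follows essentially the same route as the paper: multiply the evolution equation by $f'(\n_\e)$, apply the chain rule to obtain \eqref{eq:entie}, integrate and drop the nonnegative terminal term $\int_\dom f_\e(T)\,\dx$ to get \eqref{eq:dritte}, and read the decomposition $\partial_t f_\e = g_\e + k_\e$ off the identity, placing $\e\Delta f(\n_\e)$ in $L^1([0,T];W^{-1,2}(\dom))$ by invoking \eqref{eq:dritte} with $f(s)=s^2$ to bound $\sqrt{\e}\,\Grad\n_\e$ in $L^2(\Dom)$. The only cosmetic difference is that you start directly from the conservative form of \eqref{HeleShaw-appr} whereas the paper multiplies the expanded equation \eqref{eq:noncons}; your version actually spells out the chain-rule and $k_\e$ bookkeeping more explicitly than the paper does.
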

\begin{proof}
The identity \eqref{eq:entie} follows after multiplying the evolution equation for $\n_\e$, \eqref{eq:noncons}, by $f'(\n_\e)$ and using chain rule. Integrating the inequality in space and time, we obtain
\begin{multline*}
\int_{\dom} f_\e(T)\, dx+\e\lint{f''(\n_\e)|\Grad \n_\e|^2}\\ =\int_{\dom}f_\e(0)\, dx+\lint{(f'(\n_\e)\n_\e-f_\e)\Delta\W_\e+f'(\n_\e)\n_\e\G(p_\e)}
\end{multline*}
The right hand side is bounded by the assumptions on the initial data and the $L^\infty$-bounds proved in Lemmas \ref{lem:linfne} and \ref{lem:We}. This implies \eqref{eq:dritte}.
Therefore the right hand side of \eqref{eq:entie} is contained in $L^1(\Dom)$. Using \eqref{eq:dritte} for the third term on the left hand side, we conclude that it is contained in $L^1([0,T]; H^{-1}(\dom))$. The second term on the left hand side is contained in $L^\infty([0,T];W^{-1,2}(\dom))$. Hence $\partial_t f_\e=g_\e+k_\e$ with $g_\e\in L^1([0,T]\times \dom)$ and $k_\e\in L^1([0,T]; W^{-1,2}(\dom))$ and in particular, $\partial_t f_\e\in L^1([0,T];W^{-1,1^*}(\dom))$ by the Sobolev embedding ($1^*=d/(d-1)$).
\end{proof}
\begin{remark}
The preceeding lemma implies that the time derivative of the approximation of the pressure $\partial_t p_\e=\partial_t|\n_h|^\gamma=g_\e+k_\e$ where $g_\e$ is uniformly bounded in $L^1([0,T]\times \dom)$ and $k_\e$ in $L^1([0,T]; H^{-1}(\dom))$. Hence $\partial_t \W_\e=U_\e+V_\e$ where $U_\e\in L^1([0,T];H^1(\dom))$ solves $-\mu\Delta U_\e+U_\e=k_\e$ and $V_\e\in L^1([0,T];W^{1,r}(\dom))$, $1\leq r<1^*$ solves $-\mu\Delta V_\e+V_\e=g_\e$ (see \cite[Thm. 6.1]{Benilan1995} for a proof of the second statement). Hence $\partial_t\W_\e\in L^1([0,T];W^{1,r}(\dom))$ for any $1\leq r<1^*$.
\end{remark}

\subsection{Passing to the limit $\e\rightarrow 0$}
The estimates of the previous (sub)sections allow us to pass to the limit $\e\rightarrow 0$ in a subsequence, still denoted $\e$, and conclude the existence of limit functions
\begin{align*}
\n_\e\weak \n\geq 0,&\quad \mathrm{in}\, L^q([0,T]\times \dom), \, 1\leq q<\infty,\\
p_\e\weak \weakl{p}\geq 0,&\quad \mathrm{in}\, L^q([0,T]\times \dom), \, 1\leq q<\infty,
\end{align*}
where $p_\e:=\n_\e^\gamma$ and $0\leq \n,\weakl{p}\in L^{\infty}([0,T]\times \dom)$. Using Aubin-Lions' lemma  for $\W_\e$ and $\Grad \W_\e$, we obtain strong convergence of a subsequence in $L^q([0,T]\times\dom)$ for any $q\in [0,\infty)$ to limit functions $\W, \Grad\W\in L^q([0,T]\times \dom)$. Moreover, from the estimates in Lemma \ref{lem:We} we obtain that $\W\in L^{\infty}([0,T]\times\dom)\cap L^{\infty}([0,T];W^{2,q}(\dom))$.
Hence we have that $(\n,\W,\overline{p})$ satisfy for any $\varphi,\psi \in C^1_0([0,T)\times\dom)$,
\begin{align}\label{eq:weaklim}
\begin{split}
\lint{\n\varphi_t-\n\Grad\W\cdot \Grad\varphi}+\int_{\dom}\!\n_0\,\varphi(0,x)dx&=-\lint{\overline{\n\G({p})}\varphi}\\
\lint{\W\psi+\mu\Grad\W\cdot\Grad\psi}&=\lint{\overline{p}\,\psi}
\end{split}
\end{align}
where $\overline{\n\G({p})}$ is the weak limit of $\n_\e\G(p_\e)$. %
To conclude that the limit $(\n,\W,p)$ is a weak solution of \eqref{HeleShaw}, we need to show that $\n_\e$ converges strongly and therefore in the limit $\overline{p}=p:=\n^\gamma$ and $\overline{\n\G({p})}=\n\G(p)$. 
For this purpose, we combine a compensated compactness property (Lemma \ref{lem:effve}) with a monotonicity argument.
We will also make use of the following lemma which was proved in a more general version in \cite{dipernalions1989,Novotny-Stras-2004}:
\begin{lemma}\label{lem:renormalized}
Let $\n,f\in L^{\infty}([0,T]\times\dom)$ and $\vc{u}\in L^{\infty}([0,T];H^1(\dom))$ with $\Div \vc{u}\in L^{\infty}([0,T]\times\dom)$ satisfy 
\begin{equation}\label{eq:transport1}
\n_t-\Div(\vc{u}\n)= f,
\end{equation}
in the sense of distributions. Then for all continuously differentiable functions $b\in C^1(\R)$,
\begin{equation}\label{eq:renormalized}
b(\n)_t-\Div(\vc{u}b(\n))= b'(\n) f+ [b'(\n)\n-b(\n)]\Div\vc{u},
\end{equation}
in the sense of distributions.
\end{lemma}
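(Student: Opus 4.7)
The plan is to prove this via the classical DiPerna--Lions commutator/mollification method. The idea is to regularize $\n$ by spatial convolution, derive an equation for the smooth approximation, apply the chain rule pointwise, and then pass to the limit, with the only delicate ingredient being a commutator estimate exploiting $\vc{u}\in H^1$.

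First, fix a standard spatial mollifier $\omega_\delta$ and set $\n_\delta(t,x):=(\n(t,\cdot)\ast\omega_\delta)(x)$. Since the argument is local, I would work with test functions $\varphi\in C^\infty_c((0,T)\times\dom)$ and choose $\delta$ small enough that the convolution is well defined on $\supp\varphi$. Convolving the transport equation \eqref{eq:transport1} in $x$ with $\omega_\delta$ gives, in the sense of distributions,
\begin{equation*}
\partial_t \n_\delta - \Div(\vc{u}\,\n_\delta) \;=\; f_\delta \;+\; r_\delta,
\qquad r_\delta:=\Div(\vc{u}\n)\ast\omega_\delta-\Div(\vc{u}\,\n_\delta),
\end{equation*}
where $f_\delta:=f\ast\omega_\delta$. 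The commutator $r_\delta$ is the only obstruction to directly applying the chain rule.

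Second, I would invoke the DiPerna--Lions commutator lemma: under the present hypotheses $\n\in L^\infty$, $\vc{u}\in L^\infty([0,T];H^1(\dom))$ and $\Div\vc{u}\in L^\infty$, one has $r_\delta\to 0$ in $L^1_{\mathrm{loc}}((0,T)\times\dom)$ as $\delta\to 0$. The standard proof rewrites
\begin{equation*}
r_\delta(t,x)=\int_{\dom}\bigl(\vc{u}(t,x)-\vc{u}(t,y)\bigr)\cdot\Grad\omega_\delta(x-y)\,\n(t,y)\,dy-(\Div\vc{u})(t,x)\,\n_\delta(t,x),
\end{equation*}
bounds the right-hand side pointwise in terms of the maximal function of $|\Grad\vc{u}|$ using the difference-quotient identity for $H^1$ functions, and concludes by Lebesgue differentiation and dominated convergence.

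Third, because $\n_\delta$ is smooth in $x$ and $\partial_t\n_\delta$ is a distribution that equals the $L^1_{\mathrm{loc}}$ right-hand side above, I can legitimately multiply by $b'(\n_\delta)$ and apply the classical chain rule to obtain
\begin{equation*}
\partial_t b(\n_\delta)-\vc{u}\cdot\Grad b(\n_\delta)-b'(\n_\delta)\n_\delta\,\Div\vc{u}
\;=\;b'(\n_\delta)\bigl(f_\delta+r_\delta\bigr)+b'(\n_\delta)\n_\delta\,\Div\vc{u}-b'(\n_\delta)\n_\delta\,\Div\vc{u}.
\end{equation*}
Rewriting $\vc{u}\cdot\Grad b(\n_\delta)=\Div(\vc{u}\,b(\n_\delta))-b(\n_\delta)\Div\vc{u}$ yields
\begin{equation*}
\partial_t b(\n_\delta)-\Div\bigl(\vc{u}\,b(\n_\delta)\bigr)
\;=\;b'(\n_\delta)\bigl(f_\delta+r_\delta\bigr)+\bigl[b'(\n_\delta)\n_\delta-b(\n_\delta)\bigr]\Div\vc{u}.
\end{equation*}

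Finally, since $\n\in L^\infty$, the values of $\n_\delta$ lie in a fixed compact interval $K\subset\R$, so only $b|_K$ and $b'|_K$ matter and both are bounded and continuous. Standard properties of mollification give $\n_\delta\to\n$ in $L^p_{\mathrm{loc}}$ and a.e., hence $b(\n_\delta)\to b(\n)$ and $b'(\n_\delta)\n_\delta\to b'(\n)\n$ in every $L^p_{\mathrm{loc}}$. Combining this with $r_\delta\to 0$ in $L^1_{\mathrm{loc}}$, $f_\delta\to f$ in $L^p_{\mathrm{loc}}$, and the $L^\infty$-bounds on $\vc{u}$ and $\Div\vc{u}$, I pass to the limit $\delta\to 0$ against any test function $\varphi\in C^\infty_c((0,T)\times\dom)$ to obtain \eqref{eq:renormalized}. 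The main obstacle is the commutator estimate in step two; once it is in hand the rest reduces to chain rule plus a routine weak/strong limit passage.
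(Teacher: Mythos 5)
Your proposal is correct and follows essentially the same DiPerna--Lions mollification-plus-commutator strategy as the paper; the only cosmetic differences are that the paper mollifies in space and time while you mollify in space only, and the paper cites Lions' commutator lemma for $r_\delta\to 0$ while you sketch the maximal-function argument directly. (One sign typo: the kernel in your commutator representation should read $\vc{u}(t,y)-\vc{u}(t,x)$, but this does not affect the estimate.)
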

\begin{proof}
We let $0\leq \psi\in C^\infty_0(\R^{d+1})$ be a smooth, radially symmetric mollifier, i.e. $\psi(x)=\psi(-x)$ and $\int_{\R^{d+1}} \psi(x) dx$, with $\supp(\psi)\subset B_1(0)$ and denote for $\delta>0$, $\psi_\delta(x):=\delta^{-(d+1)}\psi(x/\delta)$. Then we choose as a test function in \eqref{eq:transport1} $\psi_\delta(s,y)\varphi(t+s,x+y)$, with $\varphi$ is compactly supported in $(\delta,T-\delta)\times\dom^\delta$ where $\dom^\delta$ includes all the points $x$ in $\dom$ which have distance $d(x,\partial\dom)>\delta$ and do a change of variables:
\begin{multline*}
\lint{\n(t-s,x-y)\psi_\delta(s,y)\partial_t\varphi(t,x)-\n(t-s,x-y)\vc{u}(t,x)\psi_\delta(s,y)\cdot\Grad\varphi(t,x)}\\=-\lint{f(t-s,x-y)\psi_\delta(s,y)\varphi(t,x)}.
\end{multline*}
Integrating in $(s,y)$, this becomes
\begin{multline*}
\lint{(\n\ast\psi_\delta)(t,x)\partial_t\varphi(t,x)-(\n\vc{u})\ast \psi_\delta (t,x)\cdot\Grad\varphi(t,x)}\\=-\lint{(f\ast \psi_\delta)(t,x)\varphi(t,x)}.
\end{multline*}
We define $\n_\delta:=\n\ast\psi_\delta$ and $f_\delta:=f\ast\psi_\delta$ and choose as a test function $\varphi:=b'(\n_\delta)\phi$ for a smooth $\phi$ compactly supported in $(\delta,T-\delta)\times\dom^\delta$ (which is possible since $\n_\delta$ is smooth and bounded thanks to the convolution.). Then we can rewrite the last identity using chain rule as
 \begin{multline*}
\lint{b(\n_\delta)\partial_t\phi-b(\n_\delta)\vc{u}\cdot\Grad\phi} \\=-\lint{\left(b'(\n_\delta)f_\delta+[b'(\n_\delta)\n_\delta-b(\n_\delta)]\Div\vc{u} +b'(\n_\delta) r_\delta \right)\phi}.
\end{multline*}
where $r_\delta:=\Div((\n\vc{u})\ast\psi_\delta)-\Div(\n_\delta \vc{u})$. By \cite[Lemma 2.3]{Lions-1998-1}, we have that $r_\delta\rightarrow 0$ in $L^2_{\mathrm{loc}}(\Dom)$ and thanks to the properties of the convolution that $b(\n_\delta)\rightarrow b(\n)$ almost everywhere as well as $f_\delta\rightarrow f$ a.e. when $\delta\rightarrow 0$.
Thus we obtain that in the limit $\delta\rightarrow 0$, $\n$ satisfies
\begin{equation*}
\lint{b(\n)\partial_t\phi-b(\n)\vc{u}\cdot\Grad\phi} =-\lint{\left(b'(\n)f+[b'(\n)\n-b(\n)]\Div\vc{u}  \right)\phi}.
\end{equation*}
which is exactly \eqref{eq:renormalized} in the sense of distributions.
\end{proof}
Applying Lemma \ref{lem:renormalized} for the weak limit $\n$ in \eqref{eq:weaklim} with $b(\n)=\n^2$, we obtain that $\n$ satisfies
\begin{equation}\label{eq:faen1}
\lint{\n^2\varphi_t-\n^2\Grad\W\cdot \Grad\varphi}=-\lint{(2n\overline{\n\G({p})}+n^2\Delta\W)\varphi}
\end{equation}
for any test functions $\varphi \in C^1_0((0,T)\times\dom)$.
On the other hand, from \eqref{eq:entie} for $b(\n)=\n^2$ we obtain after integrating in space and time
\begin{equation*}
\int_{\dom}\n_\e^2(\tau)\, dx-\int_{\dom}\n_\e^2(0)\, dx\leq \int_0^\tau\int_{\dom} \n_\e^2\Delta\W_\e+2\n_\e^2\G(p_\e)\, dx dt
\end{equation*}
Passing to the limit $\e\rightarrow 0$ in this inequality, we have
\begin{equation}\label{eq:hej}
\int_{\dom}\overline{\n^2}(\tau)\, dx-\int_{\dom}\n^2_0\, dx\leq \int_0^\tau\int_{\dom} \overline{\n^2\Delta\W}+2\overline{\n^2\G(p)}\, dx dt,
\end{equation}
where $\overline{\n^2}$ denotes the weak limit of $\n_\e^2$ and $\overline{\n^2\Delta \W}$ and $\overline{\n^2 \G(p)}$ are the weak limits of $\n_\e^2\Delta \W_\e$ and $\n_\e^2 \G(p_\e)$ respectively. Letting $\tau\rightarrow 0$ in this inequality, we obtain, thanks to the boundedness of the integrand on the right hand side,
\begin{equation*}
\int_{\dom}\overline{\n^2}(0)\, dx-\int_{\dom}\n^2_0\, dx\leq 0.
\end{equation*}
On the other hand, since $b(\n)=\n^2$ is convex, we have $\overline{\n^2}\geq \n^2$ and hence $\overline{\n^2}(0,x)=\n_0^2(x)$.

We now choose smooth test functions $\varphi_\eps$ approximating $\varphi(t,x)=\mathbf{1}_{[0,\tau]}(t)$, where $\tau\in (0,T]$, in inequality \eqref{eq:faen1} and then pass to the limit in the approximation to obtain the inequality
\begin{equation}\label{eq:faene}
\int_{\dom}\n^2(\tau)\, dx-\int_{\dom}\n^2_0\, dx= \int_{0}^\tau\int_{\dom} (2\n\overline{\n\G(p)}+\n^2\Delta\W)\, dxdt
\end{equation}
Subtracting  \eqref{eq:faene} from \eqref{eq:hej}, we have
\begin{multline}\label{eq:schafseckel2}
\int_{\dom} \left(\overline{\n^2}-\n^2\right)\!(\tau) dx\\
\leq \int_0^\tau\!\!\int_{\dom}\left(2 \overline{\n^2 \G(p)}-2n\overline{\n\G({p})}+\Delta\W \left(\overline{\n^2}-\n^2\right)+\overline{\n^2\Delta \W}-\overline{n^2}\Delta\W\right) dx\, dt.
\end{multline}
%
%
Now using the explicit expression of $\G$, \eqref{G-example}, the first term on the right hand side can be estimated as follows:
\begin{align}\label{eq:estimate1}
\begin{split}
&\int_0^\tau\!\!\int_{\dom}\left(2 \overline{\n^2 \G(p)}-2n\overline{\n\G({p})}\right) dx\, dt\\
&\qquad=2\int_0^\tau\!\!\int_{\dom}\alpha  \left( \overline{\n^2}-\n^2 \right)- \beta \left(\overline{\n^{2+\gamma\theta}} - \n\overline{\n^{1+\gamma\theta}}\right)  dx\, dt\\
&\qquad\leq 2\int_0^\tau\!\!\int_{\dom}\alpha  \left( \overline{\n^2}-\n^2 \right)- \beta \left(\overline{\n^{2+\gamma\theta}} - \overline{\n^{2+\gamma\theta}}\right)  dx\, dt\\
&\qquad\leq 2\alpha \int_0^\tau\!\!\int_{\dom}\left( \overline{\n^2}-\n^2 \right)dx\, dt
\end{split}
\end{align}
where we have used \cite[Lemma 3.35]{Novotny-Stras-2004}, which implies $\n \overline{\n^{1+\gamma\theta}}\leq \overline{\n^{2+\gamma\theta}}$, for the first inequality. 
To estimate the second term on the right hand side, we use that $\Delta \W$ is bounded thanks to Lemma \ref{lem:We} and that $\overline{\n^2}\geq \n^2$ by the convexity of $f(x)=x^2$.
Hence
\begin{equation}\label{eq:helvete}
\int_0^\tau\int_{\dom}\Delta\W\left(\overline{\n^2}-\n^2  \right)dxdt\leq \frac{P_M}{\mu}\int_0^\tau\int_{\dom}\left(\overline{\n^2}-\n^2  \right)dxdt.
\end{equation}

For the last term, we use the following lemma,
\begin{lemma}\label{lem:effve}
The weak limits $(\n,\W,\weakl{p})$ of the sequences $\{(\n_\e,\W_\e,p_\e)\}_{\e>0}$ satisfy for smooth functions $S:\R\rightarrow\R$,
\begin{equation}\label{eq:gugus2}
\int_{\dom}\left(\weakl{S(\n)\Delta\W}- \weakl{S(\n)}\Delta\W\right) dx = \frac{1}{\mu} \int_{\dom}\left(\weakp{p}{S(\n)}- \weakl{pS(\n)}\right)dx
\end{equation}
where $\overline{S(\n)\Delta\W}$, $\overline{S(\n)}$, $\overline{pS(\n)}$ are the weak limits of $S(\n_\e)\Delta\W_\e$, $S(\n_\e)$ and $p_\e S(\n_\e)$ respectively.
\end{lemma}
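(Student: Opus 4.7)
The identity is a direct consequence of the algebraic relation between $\Delta \W_\e$ and $p_\e$ furnished by Brinkman's equation together with the strong compactness of $\W_\e$ established in the previous (sub)sections. The plan is to eliminate the Laplacian in favour of the difference $\W_\e-p_\e$, multiply by $S(\n_\e)$, and pass to the weak limit in the resulting bilinear expression; the crucial point is that one of the two factors, namely $\W_\e$, converges strongly, so that the product passes to the limit with no defect.

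\textbf{Step 1: Rewrite the Laplacian.} From the elliptic equation $-\mu\Delta\W_\e+\W_\e=p_\e$ I get pointwise
\[
\mu\,\Delta \W_\e \;=\; \W_\e - p_\e .
\]
Multiplying by $S(\n_\e)$ and integrating over $\dom$ yields, for a.e. $t\in[0,T]$,
\[
\mu\int_{\dom} S(\n_\e)\,\Delta\W_\e \,dx \;=\; \int_{\dom} S(\n_\e)\,\W_\e\,dx \;-\; \int_{\dom} S(\n_\e)\,p_\e\,dx .
\]

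\textbf{Step 2: Pass to the limit in each product.} Since $S$ is smooth and $\n_\e$ is uniformly bounded by Lemma \ref{lem:linfne}, the sequence $S(\n_\e)$ is uniformly bounded in $L^\infty(\Dom)$, so extracting a further subsequence it converges weakly-$\star$ to some $\overline{S(\n)}$. Similarly $p_\e S(\n_\e)\weak \overline{pS(\n)}$ and $S(\n_\e)\Delta\W_\e\weak \overline{S(\n)\Delta\W}$. By Lemma \ref{lem:We} and the Aubin--Lions argument used in the passage to the limit $\e\to 0$, the potentials $\W_\e$ converge strongly in $L^q(\Dom)$ for every $q<\infty$ to $\W$. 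Combining a bounded weakly convergent sequence with a strongly convergent one, I deduce
\[
\int_{\dom} S(\n_\e)\,\W_\e\,dx \;\longrightarrow\; \int_{\dom} \overline{S(\n)}\,\W\,dx .
\]
Therefore, passing to the limit in the identity of Step 1,
\[
\mu\int_{\dom} \overline{S(\n)\Delta\W}\,dx \;=\; \int_{\dom} \overline{S(\n)}\,\W\,dx \;-\; \int_{\dom} \overline{pS(\n)}\,dx .
\]

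\textbf{Step 3: Repeat for the limiting equation.} The limit equation $-\mu\Delta\W+\W=\overline{p}$, which is part of \eqref{eq:weaklim}, gives $\mu\Delta\W=\W-\overline{p}$. Multiplying by $\overline{S(\n)}$ and integrating,
\[
\mu\int_{\dom} \overline{S(\n)}\,\Delta\W\,dx \;=\; \int_{\dom} \overline{S(\n)}\,\W\,dx \;-\; \int_{\dom} \overline{S(\n)}\,\overline{p}\,dx .
\]
Subtracting this identity from the one obtained in Step 2, the common term $\int_{\dom}\overline{S(\n)}\,\W\,dx$ cancels and I arrive at
\[
\mu\int_{\dom}\left(\overline{S(\n)\Delta\W}-\overline{S(\n)}\,\Delta\W\right)dx \;=\; \int_{\dom}\left(\overline{S(\n)}\,\overline{p}-\overline{pS(\n)}\right)dx,
\]
which, after dividing by $\mu$, is exactly \eqref{eq:gugus2}.

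\textbf{Main obstacle.} The only non-routine point is the legitimacy of the passage to the limit in the product $S(\n_\e)\W_\e$; this requires the \emph{strong} convergence of $\W_\e$, which is available because Brinkman's equation gains two spatial derivatives over the (uniformly bounded) right-hand side $p_\e$, and the control of $\partial_t \W_\e$ from the remark following Lemma \ref{lem:fn} allows one to apply Aubin--Lions. Once strong convergence of $\W_\e$ is in hand, the remainder of the argument is purely algebraic, which is precisely the mechanism behind the weak continuity of the effective viscous pressure in this viscous-elliptic setting.
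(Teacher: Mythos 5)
Your proof is correct and follows essentially the same route as the paper: multiply the Brinkman equation for $\W_\e$ by $S(\n_\e)$, pass to the weak limit using the strong convergence of $\W_\e$ (so the product $S(\n_\e)\W_\e$ converges without defect), then pair the limit equation $-\mu\Delta\W+\W=\overline{p}$ with $\overline{S(\n)}$ and subtract. The only cosmetic differences are that you solve for $\mu\Delta\W_\e=\W_\e-p_\e$ explicitly before multiplying, and you pair the limit equation with $\overline{S(\n)}$ directly rather than testing against $S(\n_\e)$ and then letting $\e\to 0$; both variants land on the same two identities whose difference is \eqref{eq:gugus2}.
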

Applying this lemma to the second term in \eqref{eq:schafseckel2} with $S(\n)=\n^2$, we can estimate it by
\begin{align*}
\int_{0}^\tau\int_{\dom}\left(\weakl{\n^2\Delta\W}- \weakl{\n^2}\Delta\W\right) dx &= \frac{1}{\mu} \int_{\dom}\left(\weakp{p}{\n^2}- \weakl{p \n^2}\right)dxdt\\
&= \frac{1}{\mu} \int_{\dom}\left(\weakp{\n^{\gamma}}{\n^2}- \weakl{ \n^{2+\gamma}}\right)dxdt\\
&\leq 0,
\end{align*}
using that $\weakp{\n^{\gamma}}{\n^2}\leq \weakl{ \n^{2+\gamma}}$ 
(cf.  \cite{Novotny-Stras-2004}). Thus,
\begin{equation*}
\int_{\dom} \left(\weakl{\n^2}-\n^2\right)\!(\tau) dx \leq \left(2\alpha+\frac{P_M}{\mu}\right) \int_0^\tau\!\!\int_{\dom}\left(\weakl{\n^2}-\n^2\right) dx\, dt.
\end{equation*}
Hence Gr\"{o}nwall's inequality implies
\begin{equation*}
\int_{\dom} \left(\weakl{\n^2}-\n^2\right)\!(\tau) dx\leq 0
\end{equation*}
By convexity of the function $f(x)=x^2$ we also have $\n^2\leq \weakl{\n^2}$ almost everywhere and so
\begin{equation*}
\weakl{\n^2}(t,x)=\n^2(t,x)
\end{equation*}
almost everywhere in $\Dom$. Therefore we conclude that the functions $\n_\e$ converge strongly to $\n$ almost everywhere and in particular also $\overline{p}=\n^\gamma$ which means that the limit $(\n,\W,\overline{p})$ is a weak solution of the equations \eqref{HeleShaw}.
\begin{proof}[Proof of Lemma \ref{lem:effve}]
We multiply the equation for $\W_\e$ by $S(\n_\e)$ and integrate over $\dom$,
%
%
\begin{equation*}
  \int_{\dom}\mu\Delta \W_\e\,S(\n_\e)-\W_\e S(\n_\e)\,dx = -\int_{\dom} p_\e S(\n_\e) \,dx. 
\end{equation*}
Passing to the limit $\e\rightarrow 0$, we obtain
\begin{equation}\label{eq:dubel2}
 \int_{\dom}\mu\overline{\Delta\W S(\n)}-\W \overline{S(\n)}\,dx =-\int_{\dom} \overline{ p S(\n)} \,dx.
\end{equation}
On the other hand, using the smooth function $S(\n_\e)$ as a test function in the weak formulation of the limit equation
\begin{equation*}
-\mu\Delta\W+\W= \overline{p},
\end{equation*}
and passing to the limit $\e\rightarrow 0$,
we obtain
\begin{equation*}
\int_{\dom} \mu\Delta\W \overline{S(\n)} -\W\overline{S(\n)}\, dx=-\int_{\dom}  \weakp{p}{S(\n)}\, dx.
\end{equation*}
Combining the last identity with \eqref{eq:dubel2}, we obtain \eqref{eq:gugus2}.
\end{proof}

%
%
%
%
%
%
%
%
%
%
%
%
%
%
%
%
%
%
%
%
%
%
%
%
%
%

\section{Global existence via a numerical approximation}
\label{S4}
We consider the problem in two space dimensions in a rectangular domain, for simplicity we use $\dom=[0,1]^2$, the generalization to other rectangular domains as well as three space dimensions is straightforward but more cumbersome in terms of notation, for this reason we restrict ourself to a square two dimensional domain here. For simplicity, we will also assume $a=1$ in the Brinkman law in \eqref{HeleShaw}. We let $h>0$ the mesh width, and $\Dt$ the time step size. We will determine the necessary ratio between $h$ and $\Dt$ later on. 
For $i,j=1,\dots, N_x$, where $N_x=1/h$, $h$ chosen such that $N_x$ is an integer, we denote grid cells $\cell_{ij}:=((i-1)h,ih]\times ((j-1)h,jh]$ with cell midpoints $x_{i,j}=((i-1/2)h,(j-1/2)h)$. In addition, we denote $t^m=m \Dt$, $m=0,\dots N_T$, where $N_T=T/\Dt$ for some final time $T>0$. The approximation of a function $f$ at grid point $x_{i,j}$ and time $t^m$ will be denoted $f_{i,j}^m$. We also introduce the finite differences,
\begin{align*}
\begin{split}
D^\pm_1 f_{ij}=\pm\frac{f_{i\pm1,j}-f_{i,j}}{h},\quad D^\pm_2 f_{ij}=\pm\frac{f_{i,j\pm1}-f_{i,j}}{h},\quad D^\pm_t f^m=\pm\frac{f^{m\pm 1}-f^m}{\Dt}.
\end{split}
\end{align*}
and define the discrete Laplacian, divergence and gradient operators based on these,
\begin{equation*}
\Grad_h^{\pm}:=(D^\pm_1,D^\pm_2)^t,\quad \Div_h^{\pm} f_{i,j}=D^{\pm}_1 f^{(1)}_{i,j}+ D^{\pm}_2 f^{(2)}_{i,j},\quad  \Delta_h:= \Div_h^{\pm}\Grad_h^{\mp}.
\end{equation*}
For ease of notation, we also let $u_{i+1/2,j}$ and $v_{i,j+1/2}$ denote the discrete velocities in the transport equation, specifically, given $\W_{i,j}$, we let
\begin{equation}\label{eq:discv}
u_{i+1/2,j}:=D^+_1 \W_{i,j},\quad v_{i,j+1/2}:= D^+_2 \W_{i,j}.
\end{equation}
\subsection{An explicit finite difference scheme} Given $(\n_{i,j}^m,\W_{i,j}^m)$ at time step $m$, we define the quantities $(\n_{i,j}^{m+1},\W_{i,j}^{m+1})$ at the next time step by
\begin{subequations}\label{eq:expscheme1}
\begin{align}
\label{seq2:Wscheme}
-\mu \Delta_h \W^{m}_{i,j}+ \W_{i,j}^{m}&= p_{i,j}^{m},\\
p_{i,j}^{m}&:= |\n_{i,j}^{m}|^\gamma,\\
\label{seq1:nscheme}
D_t^+ \n_{i,j}^m+ D^-_1 F^{(1)}_{i+1/2,j}(u^m,n^m)+ D^-_2 F^{(2)}_{i,j+1/2}(v^m,n^m) &= \n_{i,j}^m \G (p_{i,j}^m),
\end{align}
\end{subequations}

where $p_{i,j}=(\n_{i,j})^\gamma$ and the fluxes $F^{(j)}$, $j=1,2$ are defined by
\begin{align}\label{eq:fluxes}
\begin{split}
F^{(1)}_{i+1/2,j}(u^m,n^m)&= - u_{i+1/2,j}^m \frac{\n_{i,j}^m+\n_{i+1,j}^m}{2}-\frac{h}{2} |u_{i+1/2,j}| D^+_1 \n_{i,j}^m\\
F^{(2)}_ {i,j+1/2}(v^m,n^m)& =-  v_{i,j+1/2}^m \frac{\n_{i,j}^m+\n_{i,j+1}^m}{2}-\frac{h}{2} |v_{i,j+1/2}| D^+_2 \n_{i,j}^m.
\end{split}
\end{align}
We use homogeneous Neumann or periodic boundary conditions for both variables:
\begin{align*}
\n_{0,j}^m&=\n_{1,j}^m,\qquad \n_{N_x+1,j}^m=\n_{N_x,j}^m, &j=1,\dots,N_x,\\
\n_{i,0}^m&=\n_{i,1}^m,\qquad \n_{i,N_x+1}^m=\n_{i,N_x}^m, &i=1,\dots,N_x,\\
\W_{0,j}^m&=\W_{1,j}^m,\qquad \W_{N_x+1,j}^m=\W^m_{N_x,j}, &j=1,\dots,N_x,\\
\W_{i,0}^m&=\W_{i,1}^m,\qquad \W_{i,N_x+1}^m=\W_{i,N_x}^m, &i=1,\dots,N_x.
\end{align*}
The initial condition we approximate taking averages over the cells,
\begin{equation*}
\n_{i,j}^0=\frac{1}{|\cell_{ij}|}\int_{\cell_{ij}}\!\! \n_0(x)\, dx,\quad p^0_{i,j}=|\n_{i,j}^0|^\gamma,\quad i,j=1,\dots, N_x.
\end{equation*}
\subsection{Estimates on approximations}
In the following, we will prove estimates on the discrete quantities $(\n_{i,j}^{m},\W_{i,j}^{m})$ obtained using the scheme \eqref{eq:discv}--\eqref{eq:fluxes}.
We therefore define the piecewise constant functions
\begin{equation}\label{eq:interpolation}
f_h(t,x)=\sum_{m=0}^{N_T}\sum_{i,j=1}^{N_x} f_{i,j}^m\, \mathbf{1}_{\cell_{ij}}(x)\mathbf{1}_{[t^m,t^{m+1})}(t),\quad (t,x)\in [0,T]\times\dom,\\
\end{equation}
where $f\in \{\n,\W,p\}$. 
We first prove that $\n_h$ stays nonnegative and uniformly bounded from above.
\begin{lemma}\label{lem:linfn_h}
If $0\leq \n_{i,j}^0\leq \n_{\infty}:=P_M^{1/\gamma}<\infty$ uniformly in $h>0$ and the timestep $\Dt$ satisfies the CFL condition
\begin{equation}\label{eq:CFL1}
\Dt\leq \min\left\{\frac{h}{8\max_{ij} |\Grad_h \W_{i,j}^m|+h\G^\infty},\frac{\mu}{4\gamma \nmax^\gamma}\right\}
\end{equation}
{\normalfont(}where $\G^\infty:=\max_{s\in\R^+}{\G(s)}${\normalfont)},
then for any $t>0$, the functions $\n_h(t,\cdot)$ are uniformly {\normalfont(}in $h>0${\normalfont)} bounded and nonnegative,  specifically, defining $\nmax=\n_\infty+4\Dt \sup_{s\geq 0} \left( s^{1/\gamma}\G(s)\right)$, we have for all $m\geq 0$,
\begin{equation*}
0\leq \min_{i,j}\n_{i,j}^m\leq \max_{i,j} \n_{i,j}^{m}\leq \nmax.
\end{equation*}
\end{lemma}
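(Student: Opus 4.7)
The plan is to adapt the continuous maximum-principle argument of Lemma~\ref{lem:linfne} to the discrete level, exploiting the fact that the explicit scheme is in monotone/upwind form. I would first establish a discrete maximum principle for the Brinkman step: if $W^m$ attains its maximum at an interior point $(i_0,j_0)$, the five-point Laplacian is nonpositive there, and
\begin{equation*}
W_{i_0,j_0}^m = p_{i_0,j_0}^m+\mu\Delta_h W_{i_0,j_0}^m \le p_{i_0,j_0}^m \le \max_{i,j} p_{i,j}^m.
\end{equation*}
The mirror (Neumann) ghost prescription makes this argument go through on boundary cells as well, and the symmetric argument applied to $-W^m$ gives $W_{i,j}^m \ge \min_{i,j}p_{i,j}^m$.

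Next I would rewrite the transport update in upwind form. Expanding $D_1^-F^{(1)}+D_2^-F^{(2)}$ and collecting in the neighbouring values $n^m_{k,l}$, the scheme \eqref{eq:expscheme1} becomes
\begin{equation*}
n_{i,j}^{m+1}=c_0\,n_{i,j}^m+\sum_{\mathrm{nbr}}c_{\mathrm{nbr}}\,n_{\mathrm{nbr}}^m+\Dt\,n_{i,j}^m\G(p_{i,j}^m),
\end{equation*}
where each $c_{\mathrm{nbr}}$ is proportional to one of the one-sided parts $u^\pm_{i\pm1/2,j}$, $v^\pm_{i,j\pm1/2}$ of the discrete velocities (hence $\ge 0$ automatically), and $c_0\ge 0$ under the first part of the CFL condition \eqref{eq:CFL1} (the factor $8$ absorbs the four signed contributions plus a one-step reaction growth). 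A short algebraic computation exploiting $x^+-x^-=x$ then yields the key identity $c_0+\sum_{\mathrm{nbr}}c_{\mathrm{nbr}}=1+\Dt\,\Delta_h W_{i,j}^m=1+\tfrac{\Dt}{\mu}(W_{i,j}^m-p_{i,j}^m)$, which is the discrete counterpart of the non-conservative reformulation \eqref{eq:noncons} used at the continuous level.

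For the upper bound I would argue by induction. Assume $0\le n^m_{i,j}\le\nmax$ everywhere, pick $(i_0,j_0)$ where $n^m$ (hence $p^m$) is maximal, denote $M:=n_{i_0,j_0}^m$, and use the discrete Brinkman maximum principle to deduce $W_{i_0,j_0}^m\le p_{i_0,j_0}^m$. Bounding the neighbour terms by $M$ gives
\begin{equation*}
n_{i_0,j_0}^{m+1}\le M\bigl(1+\tfrac{\Dt}{\mu}(W_{i_0,j_0}^m-p_{i_0,j_0}^m)\bigr)+\Dt\,M\,\G(M^\gamma)\le M+\Dt\,M\,\G(M^\gamma).
\end{equation*}
If $M\le\n_\infty$ then $\G(M^\gamma)\ge 0$ and the increment is at most $\Dt\sup_{s\ge 0}(s^{1/\gamma}\G(s))$, which is exactly the slack built into the definition $\nmax=\n_\infty+4\Dt\sup(s^{1/\gamma}\G(s))$; if $M>\n_\infty$ then $\G(M^\gamma)<0$, so $n_{i_0,j_0}^{m+1}\le M\le\nmax$ as soon as $1+\Dt\G(M^\gamma)\ge 0$, which is ensured by the second part $\Dt\le\mu/(4\gamma\nmax^\gamma)$ of \eqref{eq:CFL1} together with the explicit form $\G(p)=\alpha-\beta p^\theta$ and the inductive bound $p\le\nmax^\gamma$. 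Nonnegativity follows in the same spirit: grouping the $n_{i,j}^m$-terms as $(c_0+\Dt\G(p_{i,j}^m))n_{i,j}^m$, the same CFL bound shows $c_0+\Dt\G(p_{i,j}^m)\ge 0$, so $n^{m+1}_{i,j}$ is a nonnegative linear combination of the nonnegative $n^m_\cdot$ values.

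The main obstacle is the apparent circularity in \eqref{eq:CFL1}: the CFL condition involves $\max_{ij}|\Grad_h \W_{i,j}^m|$ and $\nmax$, while the bound $\nmax$ is exactly what the lemma delivers, and $\Grad_h\W^m$ is determined by $n^m$ through the discrete Brinkman equation. I would close the loop by running the induction on the joint statement ``$0\le n^m\le\nmax$ and $\max_{ij}|\Grad_h \W_{i,j}^m|\le C(\nmax)$'', deriving the required bound on $\Grad_h \W^m$ from the bound on $n^m$ at each step via a discrete elliptic regularity estimate for $-\mu\Delta_h \W^m+\W^m=p^m$ (a discrete analogue of Lemma~\ref{lem:We}), before advancing the induction to step $m+1$.
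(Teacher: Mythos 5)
Your discrete Brinkman maximum principle, the upwind rewrite of the transport update, and the nonnegativity argument (grouping the $n^m_{i,j}$-terms so that their coefficient is $\alpha^{(1)}+\alpha^{(2)}\geq 0$) all match the paper's proof. The circularity you flag in the CFL condition is a reasonable concern, and your joint-induction resolution is a sensible way to close it, though the paper simply treats \eqref{eq:CFL1} as an adaptive constraint on $\Dt$ at each step.

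However, there is a genuine gap in your upper-bound induction step. You only bound $n^{m+1}_{i_0,j_0}$ at the cell $(i_0,j_0)$ where $n^m$ attains its maximum, and this does \emph{not} control $\max_{i,j} n^{m+1}_{i,j}$, because the maximum of $n^{m+1}$ can occur at a different cell $(i_1,j_1)$. At such a cell you have neither the Brinkman upper bound $\W^m_{i_1,j_1}\leq p^m_{i_1,j_1}$ (which holds only where $p^m$ is maximal) nor any control of sign on $\Delta_h \W^m_{i_1,j_1}$; indeed $\Delta_h \W^m_{i_1,j_1}=\frac{1}{\mu}(\W^m_{i_1,j_1}-p^m_{i_1,j_1})$ can be strictly positive there, so the sum of weights $c_0+\sum c_{\mathrm{nbr}}=1+\Dt\Delta_h\W^m_{i_1,j_1}$ can exceed $1$, and the naive bound by $M$ fails.

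The paper avoids this by bounding $n^{m+1}_{i,j}$ at \emph{every} cell, not only at the old argmax. The mechanism is: since under \eqref{eq:CFL1} the upwind coefficients satisfy $\alpha^{(1)}_{i,j}\geq\tfrac12$ and $\beta+\zeta+\eta+\theta\leq\tfrac12$, one gets
\begin{equation*}
n^{m+1}_{i,j}\leq \nmax-\tfrac12\left(\nmax-n^m_{i,j}\right)+\alpha^{(2),m}_{i,j}\,n^m_{i,j},
\end{equation*}
and then the possibly positive contribution of $\Delta_h\W^m_{i,j}$ inside $\alpha^{(2)}$ is controlled by the chain
\begin{equation*}
\Delta_h\W^m_{i,j}=\tfrac{1}{\mu}\left(\W^m_{i,j}-p^m_{i,j}\right)\leq\tfrac{1}{\mu}\left(\nmax^\gamma-|n^m_{i,j}|^\gamma\right)\leq\tfrac{\gamma\nmax^{\gamma-1}}{\mu}\left(\nmax-n^m_{i,j}\right),
\end{equation*}
whose right-hand side \emph{vanishes} as $n^m_{i,j}\to\nmax$. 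Under the second part of the CFL condition this bad term is absorbed into the $\tfrac12$-contraction (leaving a $\tfrac14$-contraction), and the remaining case analysis on whether $n^m_{i,j}\lessgtr\n_\infty$ closes the induction. This is also the reason for the factor $4$ in the definition of $\nmax$: your local argument at $(i_0,j_0)$ only produces a one-step increment $\Dt\sup_s(s^{1/\gamma}\G(s))$ and hence appears to leave that factor unexplained, whereas the pointwise argument really needs the extra room. You would need to supply this pointwise estimate of $\Delta_h\W^m_{i,j}$ and carry out the resulting balance at arbitrary $(i,j)$ to make the induction step correct.
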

\begin{proof}
The proof goes by induction on the timestep $m$. Clearly, by the assumptions, we have $0\leq \n_{i,j}^0\leq \nmax$. For the induction step we therefore assume that this holds for timestep $m>0$ and show that it implies the nonnegativity and boundedness at timestep $m+1$.

We first show that the $\W_{i,j}^m$ are bounded in terms of the $p_{i,j}^m$. To do so, let us assume it has a local maximum $\W_{\ihat,\jhat}^m$ in a cell $\cell_{\ihat \jhat}$, for some $\ihat,\jhat\in \{1,\dots,N_x\}$. Then 
$$
D^+_k \W_{\ihat,\jhat}^m\leq 0,\quad -D^-_k \W_{\ihat,\jhat}^m\leq 0, \quad k=1,2,
$$
(if $\ihat$ or $\jhat\in\{1,N_x\}$, then because of the Neumann boundary conditions, the forward/backward difference in direction of the boundary is zero and thus the previous inequality is true as well). Hence
$$
\Delta_h \W_{\ihat,\jhat}^m=\frac{1}{h}\sum_{k=1}^2 \left(D^+_k \W_{\ihat,\jhat}^m- D^-_k \W_{\ihat,\jhat}^m\right)\leq 0.
$$

Therefore,
\begin{equation*}
\W_{\ihat,\jhat}^m=p_{\ihat,\jhat}^m+\frac{1}{\mu}\Delta_h\W_{\ihat,\jhat}^m\leq p_{\ihat,\jhat}^m\leq \max_{i,j}|\n_{i,j}^m|^\gamma.
\end{equation*}
Similarly, at a local minimum $\W_{\ihat,\jhat}^m$ of $\W_h$, we have
$$
D^+_k \W_{\ihat,\jhat}^m\geq 0,\quad -D^-_k \W_{\ihat,\jhat}^m\geq 0, \quad k=1,2,
$$
and hence
$$
\Delta_h \W_{\ihat,\jhat}^m=\frac{1}{h}\sum_{k=1}^2 \left(D^+_k \W_{\ihat,\jhat}^m- D^-_k \W_{\ihat,\jhat}^m\right)\geq  0,
$$
which implies
\begin{equation*}
\W_{\ihat,\jhat}^m=p_{\ihat,\jhat}^m+\frac{1}{\mu}\Delta_h\W_{\ihat,\jhat}^m\geq p_{\ihat,\jhat}^m\geq \min_{i,j} |\n_{i,j}^m|^\gamma\geq 0.
\end{equation*}
Thus,
\begin{equation}\label{eq:Wbound}
0\leq \W_h\leq \max_{i,j}|\n_{i,j}^m|^\gamma. 
\end{equation}
Now we rewrite the scheme \eqref{seq1:nscheme} as
\begin{equation}\label{eq:n_convex}
\n_{i,j}^{m+1}=\left(\alpha_{i,j}^{(1),m}+\alpha_{i,j}^{(2),m}\right) \n_{i,j}^m+ \beta_{i,j}^m \n_{i+1,j}^m+ \zeta_{i,j}^m \n_{i-1,j}^m+ \eta_{i,j}^m \n_{i,j+1}^m+ \theta_{i,j}^m \n_{i,j-1}^m
\end{equation}
where
\begin{align*}
\begin{split}
\alpha_{i,j}^{(1),m}&= 1-\frac{\Dt}{2h}\bigl[ (|u_{i+1/2,j}^m|+u_{i+1/2,j}^m)+(|u_{i-1/2,j}^m|-u_{i-1/2,j}^m) \\
&\hphantom{= 1-\frac{\Dt}{h}\bigl[ (} +(|v_{i,j+1/2}^m|+v_{i,j+1/2}^m) +(|v_{i,j-1/2}^m|-v_{i,j-1/2}^m)\bigr]\\
\alpha_{i,j}^{(2),m}&= \Dt \,\G(p_{i,j}^m)+\frac{\Dt}{h}\left[u_{i+1/2,j}^m-u_{i-1/2,j}^m+v_{i,j+1/2}^m-v_{i,j-1/2}^m\right]\\
\beta_{i,j}^m&=\frac{\Dt}{2h} \left( u_{i+1/2,j}^m+| u_{i+1/2,j}^m|\right)\\
\zeta_{i,j}^m&=\frac{\Dt}{2h}\left(|u_{i-1/2,j}^m|-u_{i-1/2,j}^m\right)\\
\eta_{i,j}^m&=\frac{\Dt}{2h} \left( v_{i,j+1/2}^m+| v_{i,j+1/2}^m|\right)\\
\theta_{i,j}^m&=\frac{\Dt}{2h}\left(|v_{i,j-1/2}^m|-v_{i,j-1/2}^m\right)\\
\end{split}
\end{align*}
We note that $\beta_{i,j}^m, \zeta_{i,j}^m, \eta_{i,j}^m, \theta_{i,j}^m\geq 0$, and that under the CFL-condition \eqref{eq:CFL1}, also $\alpha_{i,j}^{(1),m}+\alpha_{i,j}^{(2),m}\geq 0$. Hence, assuming that $\n_{i,j}^m\geq 0$ for all $i,j$, we have
\begin{equation*}
\begin{split}
\n_{i,j}^{m+1}&\geq \left(\beta_{i,j}^m+\zeta_{i,j}^m+\eta_{i,j}^m+\theta_{i,j}^m\right) \min\{\n_{i+1,j}^m,\n_{i-1,j}^m,\n^m_{i,j+1},\n^m_{i,j-1} \}\\
&\quad +\left(\alpha_{i,j}^{(1),m}+\alpha_{i,j}^{(2),m}\right)\n_{i,j}^m\\
&\geq 0.
\end{split}
\end{equation*}
We proceed to showing the boundedness of $\n_h$. Thanks to the CFL-condition \eqref{eq:CFL1}, we have
\begin{equation*}
\alpha_{i,j}^{(1),m}\geq \frac{1}{2},\quad \beta_{i,j}^m, \zeta_{i,j}^m, \eta_{i,j}^m, \theta_{i,j}^m\leq \frac{1}{8}.
\end{equation*}
Moreover, $\alpha_{i,j}^{(1),m}+\beta_{i,j}^m+ \zeta_{i,j}^m+ \eta_{i,j}^m+ \theta_{i,j}^m=1$. Using the induction hypothesis that $\n_{i,j}^m\leq \nmax$ for all $i,j$ and the nonnegativity of $\n_h$ which we have just proved, we can estimate $\n_{i,j}^{m+1}$:
\begin{align}\label{eq:waeck}
\begin{split}
\n_{i,j}^{m+1}&\leq \left(\alpha_{i,j}^{(1),m}+\alpha_{i,j}^{(2),m}\right) \n_{i,j}^m+\left(\beta_{i,j}^m+ \zeta_{i,j}^m+ \eta_{i,j}^m+ \theta_{i,j}^m\right)\nmax\\
&\leq \left(\frac{1}{2}+\alpha_{i,j}^{(2),m}\right)\n_{i,j}^m+\frac{1}{2}\nmax\\
&=\nmax-\frac{1}{2}\left(\nmax-\n_{i,j}^m\right)+\alpha_{i,j}^{(2),m}\n_{i,j}^m
\end{split}
\end{align}
We can rewrite and bound $\alpha_{i,j}^{(2),m}$ using the equation for $\W_{i,j}^m$, \eqref{seq2:Wscheme},
\begin{align*}
\alpha_{i,j}^{(2),m}&=\Dt\left(\G(p_{i,j}^m)+\Delta_h \W_{i,j}^m\right)\\
&=\Dt\left(\G(p_{i,j}^m)+\frac{1}{\mu}\left(\W_{i,j}^m-p_{i,j}^m\right)\right)\\
&\leq \Dt\left(\G(p_{i,j}^m)+\frac{1}{\mu}\left(\nmax^\gamma-|\n_{i,j}^m|^\gamma\right)\right)\\
&\leq \Dt\left(\G(p_{i,j}^m)+\frac{\gamma\,\nmax^{\gamma-1}}{\mu}\left(\nmax-\n_{i,j}^m\right)\right)\\
&\leq \Dt \G(p_{i,j}^m)+\frac{1}{4\nmax} (\nmax-\n_{i,j}^m) ,
\end{align*}
where we have used \eqref{eq:Wbound} for the first inequality, that $f(a)-f(b)=f'(\widetilde{a})(a-b)$ for some intermediate value $\widetilde{a}\in [b,a]$, with $f(a)=a^\gamma$, for the second inequality and the CFL-condition for the last inequality. Now going back to \eqref{eq:waeck} and inserting this there, we obtain,
\begin{align}\label{eq:pokker}
\n_{i,j}^{m+1}&\leq \nmax-\frac{1}{2}\left(\nmax-\n_{i,j}^m\right)+\left(\Dt \G(p_{i,j}^m)+\frac{1}{4\nmax} (\nmax-\n_{i,j}^m)\right)\n_{i,j}^m\notag\\
&\leq \frac{3}{4}\nmax+\frac{1}{4}\n_{i,j}^m+\Dt \n_{i,j}^m\G(p_{i,j}^m)
\end{align}
If $\n_{i,j}^m\geq \n_{\infty}$ then $\G(p_{i,j}^m)\leq 0$ and hence the expression in \eqref{eq:pokker} is bounded by $\nmax$. On the other hand, if $\n_{i,j}^m\leq \n_{\infty}$, we can bound it by
\begin{align*}
\n_{i,j}^{m+1}&\leq \frac{3}{4}\nmax+\frac{1}{4}\n_{i,j}^m+\Dt \n_{i,j}^m\G(p_{i,j}^m)\\
&\leq \frac{3}{4}\nmax+\frac{1}{4}\left(\n_\infty+4\Dt \sup_{s\geq 0}\left( s^{1/\gamma} \G(s)\right)\right)\\
&=\nmax
\end{align*}
where we used the definition of $\nmax$ for the last equality. This proves that $\n_{i,j}^{m+1}\leq \nmax$ for all $i,j$ if the same holds already for the $\n_{i,j}^m$.
\end{proof}
\begin{remark}
The estimates in the proof of the previous lemma are very coarse and therefore one can use a much larger CFL-condition than \eqref{eq:CFL1} in practice. Also note that $\nmax\rightarrow \n_\infty$ when $\Dt\rightarrow 0$. 
\end{remark}

\subsubsection{Estimates on the discrete potential $\W_h$}
\begin{lemma}\label{lem:Wh}
We have that 
$$
\W_h,\Grad_h\W_h,\Grad^2_h \W_h\subset L^\infty([0,T];L^2(\dom)),
$$ uniformly in $h>0$, where $\Grad_h:=\Grad_h^\pm$ and $\Grad_h^2:=\Grad_h^\mp\Grad_h^\pm$ and 
$$
\W_h, \Delta_h \W_h\subset L^\infty(\Dom)),
$$
uniformly in $h>0$ as well.
\end{lemma}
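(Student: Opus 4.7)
The plan is to mirror, at the discrete level, the proof of Lemma~\ref{lem:We}: use summation by parts in place of integration by parts, and read the pointwise bounds directly off the discrete Brinkman equation~\eqref{seq2:Wscheme} together with the pointwise bound on $\n_h$ already established in Lemma~\ref{lem:linfn_h}. The four claims split naturally into $L^\infty$-type and $L^\infty_tL^2_x$-type estimates, and I would handle them in that order.

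For the $L^\infty(\Dom)$ bounds, the argument inside the proof of Lemma~\ref{lem:linfn_h} already showed, via a discrete maximum principle applied at a cell where $\W_h$ attains an extremum, that $0\le \W_{i,j}^m \le \max_{k,\ell}|\n_{k,\ell}^m|^\gamma$ (cf.~\eqref{eq:Wbound}). Combined with $\n_h\le \nmax$ this yields the uniform bound on $\W_h$. For $\Delta_h\W_h$ one simply rearranges~\eqref{seq2:Wscheme} into $\Delta_h\W_{i,j}^m = \mu^{-1}(\W_{i,j}^m - p_{i,j}^m)$ and invokes the $L^\infty$ bounds just obtained for $\W_h$ and $p_h$.

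For the $L^2$-based bounds, I would square the discrete Brinkman equation and sum,
\begin{equation*}
\sum_{i,j} h^2 |p_{i,j}^m|^2 = \sum_{i,j}h^2\Bigl(|\W_{i,j}^m|^2 - 2\mu\,\W_{i,j}^m\Delta_h\W_{i,j}^m + \mu^2|\Delta_h\W_{i,j}^m|^2\Bigr),
\end{equation*}
and then use the summation-by-parts identity $-\sum_{i,j} h^2\W_{i,j}^m\Delta_h\W_{i,j}^m = \sum_{i,j} h^2|\Grad_h\W_{i,j}^m|^2$, whose boundary contributions vanish under the periodic/Neumann conditions imposed on $\W_h$. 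Since the left-hand side is uniformly bounded by Lemma~\ref{lem:linfn_h}, each of the three nonnegative terms on the right is uniformly controlled, giving the $L^\infty_tL^2_x$ bounds on $\W_h$, $\Grad_h\W_h$ and $\Delta_h\W_h$.

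The one step that is not a mechanical transcription from the continuous proof is upgrading the $L^2$ bound on $\Delta_h\W_h$ to one on the full discrete Hessian $\Grad_h^2\W_h$: the continuous argument invoked Calder\'on--Zygmund, and the discrete substitute I have in mind is the expansion $|\Delta_h\W|^2 = |D_1^+D_1^-\W|^2 + 2(D_1^+D_1^-\W)(D_2^+D_2^-\W) + |D_2^+D_2^-\W|^2$ together with the identity
\begin{equation*}
\sum_{i,j}h^2(D_1^+D_1^-\W)(D_2^+D_2^-\W) = \sum_{i,j}h^2|D_1^-D_2^-\W|^2 \ge 0,
\end{equation*}
obtained by summation by parts first in $i$ and then in $j$. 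This identity, combined with the previous step, would control every entry of $\Grad_h^2\W_h$ in $L^\infty_tL^2_x$. The main obstacle is verifying that the boundary terms arising in the mixed-direction summation by parts actually cancel under the chosen Neumann/periodic conditions; because the stencils of $D_1^\pm$ and $D_2^\pm$ touch the grid boundary asymmetrically, this requires careful bookkeeping that the continuous proof avoids entirely.
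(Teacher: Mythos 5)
Your proposal is correct and takes essentially the same route as the paper: pointwise bounds on $\W_h$ and $\Delta_h\W_h$ come from the discrete maximum principle estimate~\eqref{eq:Wbound} and rearranging~\eqref{seq2:Wscheme}, while the $L^\infty_tL^2_x$ estimates come from squaring the discrete Brinkman equation, summing, and summing by parts. The one place where you are more explicit than the paper is the Hessian step: the paper simply asserts that after summation by parts $\sum_{i,j}|\Delta_h\W_{i,j}^m|^2$ becomes $\sum_{i,j}|\Grad_h^2\W_{i,j}^m|^2$, whereas you spell out the underlying mixed-difference identity $\sum h^2(D_1^+D_1^-\W)(D_2^+D_2^-\W) = \sum h^2|D_1^-D_2^-\W|^2$. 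That is exactly what the paper's terse ``summation by parts'' step amounts to, and your caution about the Neumann boundary terms is a fair point that the paper does not address; under the discrete Neumann conditions imposed (ghost values mirroring interior values) the boundary contributions do cancel, so the proof goes through.
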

\begin{proof}
To obtain the $L^2$-estimates, we square the equation for the potential $\W_h$, \eqref{seq2:Wscheme} and sum over all $i,j$,
\begin{equation*}
\mu^2\sum_{i,j=1}^{N_x} |\Delta_h \W^{m}_{i,j}|^2-2\mu\sum_{i,j=1}^{N_x}\W_{i,j}^{m}\Delta_{h} \W_{i,j}^{m}+\sum_{i,j=1}^{N_x} |\W_{i,j}^{m}|^2= \sum_{i,j=1}^{N_x} |n_{i,j}^{m}|^{2\gamma}. 
\end{equation*}
Using summation by parts and that $\W$ satisfies either periodic or homogeneous Neumann boundary conditions, we obtain
\begin{equation*}
\mu^2\sum_{i,j=1}^{N_x} |\Grad_h^2 \W^{m}_{i,j}|^2+2\mu\sum_{i,j=1}^{N_x}|\Grad_h \W_{i,j}^{m}|^2+\sum_{i,j=1}^{N_x} |\W_{i,j}^{m}|^2= \sum_{i,j=1}^{N_x} |n_{i,j}^{m}|^{2\gamma}.
\end{equation*}
From the previous estimates, we know that $\n_h\in L^{\infty}([0,T]\times\dom)$ uniformly in $h>0$ and therefore also uniformly bounded in any other $L^p$-space, which implies together with the above identity, that $\W_h,\Grad_h \W_h, \Grad_h^2\W_h\in L^2([0,T]\times \dom)$.
That $\W_h$ is uniformly bounded follows from \eqref{eq:Wbound} and the uniform bound on $\n_h$ which was proved in the previous Lemma \ref{lem:linfn_h}.

Using this and the uniform boundedness of the pressure, we conclude by \eqref{seq2:Wscheme} that also $\Delta_h \W_h$ is uniformly bounded.
\end{proof}
\begin{remark}
Using the discrete Gagliardo-Nirenberg-Sobolev inequality, \cite[Thm.~3.4]{Bessemoulin2014}, we obtain that $\Grad_h\W_h\in L^\infty([0,T];L^q(\dom))$ for $1\leq q< q^*=2d/(d-2)$.
\end{remark}
\subsection{Discrete entropy inequalities for $\n_h$}
To prove strong convergence of the approximating sequence $\{(\n_h,\W_h)\}_{h>0}$, it will be useful to derive entropy inequalities for $\n_h$. To this end, the following lemma will be useful:
\begin{lemma}\label{lem:fn}
Let $f:\R\rightarrow \R$ be a smooth convex function and assume that $\Dt$ satisfies the CFL-condition
\begin{equation}\label{eq:CFL2}
\Dt\leq \min\left\{\frac{h}{16\max_{i,j}|\Grad_h \W_{i,j}^m|},\frac{h}{8\max_{ij} |\Grad_h \W_{i,j}^m|+h\, \G^\infty },
\frac{\mu}{4\gamma \nmax^\gamma}\right\}
\end{equation}
Denote $f_{i,j}^m:=f(\n_{i,j}^m)$ and $f_h$ a piecewise constant interpolation of it as in \eqref{eq:interpolation}. Then $f_{i,j}^m$ satisfies the following identity
\begin{align}
\label{eq:t1}
D_t f_{i,j}^m= & \frac{1}{2} D^-_1\left( u_{i+1/2,j}^m \left(f_{i,j}^m+f_{i+1,j}^m\right)\right) + \frac{1}{2} D_2^-\left( v_{i,j+1/2}^m \left(f_{i,j}^m +f_{i,j+1}^m\right)\right)\\
\label{eq:t2}
&\hphantom{=}+\frac{h}{4} D^-_1 \left[f'(\n_{i,j}^m)|u_{i+1/2,j}^m| D^+_1 \n_{i,j}^m\right] +\frac{h}{4} D^-_2 \left[f'(\n_{i,j}^m)|v_{i,j+1/2}^m| D^+_2 \n_{i,j}^m\right]\\
\label{eq:t3}
&\hphantom{=}+\frac{h}{4} D^+_1 \left[f'(\n_{i,j}^m)|u_{i-1/2,j}^m| D^-_1 \n_{i,j}^m\right] +\frac{h}{4} D^+_2 \left[f'(\n_{i,j}^m)|v_{i,j-1/2}^m| D^-_2 \n_{i,j}^m\right]\\
\label{eq:t4}
&\hphantom{=}-\frac{h^2}{4}D^-_1\left[ f''(\widetilde{n}_{i+1/2,j}^m) u_{i+1/2,j}^m |D^+_1 \n_{i,j}^m|^2\right]\\
\label{eq:t5}
&\hphantom{=}-\frac{h^2}{4}D^-_2\left[ f''(\widetilde{n}_{i,j+1/2}^m) v_{i,j+1/2}^m |D^+_2 \n_{i,j}^m|^2\right]\\
\label{eq:t6}
&\hphantom{=} -\frac{h}{4}f''(\widehat{\n}_{i-1/2,j}^m) |u_{i-1/2,j}^m||D^-_1 \n_{i,j}^m|^2-\frac{h}{4}f''(\widehat{\n}_{i,j-1/2}^m) |v_{i,j-1/2}^m||D^-_2 \n_{i,j}^m|^2\\
\label{eq:t7}
&\hphantom{=} -\frac{h}{4}f''(\widehat{\n}_{i+1/2,j}^m) |u_{i+1/2,j}^m||D^+_1 \n_{i,j}^m|^2-\frac{h}{4}f''(\widehat{\n}_{i,j+1/2}^m) |v_{i,j+1/2}^m||D^+_2 \n_{i,j}^m|^2\\
\label{eq:t8}
&\hphantom{=}+(f'(\n_{i,j}^m)\n_{i,j}^m-f_{i,j}^m) \Delta_h \W_{i,j}^m+f'(\n_{i,j}^m) \n_{i,j}^m \G(p_{i,j}^m)\\
\label{eq:t9}
&\hphantom{=}+\frac{\Dt}{2} f''(\widetilde{\n}^{m+1/2}_{i,j}) |D_t^+ \n_{i,j}^m|^2,
\end{align}
where $\widetilde{\n}_{i\pm1/2,j}^m,\widehat{\n}_{i\pm1/2,j}^m\in [\min\{\n_{i,j}^m,\n_{i\pm1,j}^m\},\max\{\n_{i,j}^m,\n_{i\pm1,j}^m\}]$, $\widetilde{\n}_{i,j\pm1/2}^m,\widehat{\n}_{i,j\pm1/2}^m\in [\min\{\n_{i,j}^m,\n_{i,j\pm1}^m\},\max\{\n_{i,j}^m,\n_{i,j\pm1}^m\}]$ and $\widetilde{\n}^{m+1/2}_{i,j}\in  [\min\{\n_{i,j}^m,\n_{i,j}^{m+1}\},\max\{\n_{i,j}^m,\n_{i,j}^{m+1}\}]$ and where the term \eqref{eq:t8} is uniformly bounded and the terms \eqref{eq:t6} -- \eqref{eq:t7} and \eqref{eq:t9} satisfy
\begin{equation}\label{eq:dritt}
\begin{split}
&\frac{h^{d+1}\Dt}{2}\sum_{m=0}^{N_T}\sum_{i,j}f''(\widehat{\n}_{i+1/2,j}^m) |u_{i+1/2,j}^m||D^+_1 \n_{i,j}^m|^2\leq C,\\
&\frac{h^{d+1}\Dt}{2}\sum_{m=0}^{N_T}\sum_{i,j}  f''(\widehat{\n}_{i,j+1/2}^m) |v_{i,j+1/2}^m||D^+_2 \n_{i,j}^m|^2\leq C,\\
&\frac{h^{d}\Dt^2}{2}\sum_{m=0}^{N_T}\sum_{i,j} f''(\widetilde{\n}^{m+1/2}_{i,j}) |D_t^+ \n_{i,j}^m|^2\leq C,
\end{split}
\end{equation}
In particular, this implies that the piecewise constant interpolation $D_t^+f_h$ is of the form $D_t^+f_h=g_h+k_h$ where $g_h\in L^1([0,T]\times \dom)$ and $k_h\in L^\infty([0,T]; W^{-1,q}(\dom))$ for any $1\leq q<\infty$ if $d=2$ and for $1\leq q \leq q^*=2d/(d-2)$ if $d>2$, uniformly in $h>0$. 
\end{lemma}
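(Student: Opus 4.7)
The plan is to derive the identity \eqref{eq:t1}--\eqref{eq:t9} by starting with the discrete evolution \eqref{seq1:nscheme}, multiplying by $f'(\n_{i,j}^m)$, and then converting every product of $f'$ with a finite difference into a genuine discrete divergence plus a sign-definite remainder via second-order Taylor expansions. First, I write
\begin{equation*}
D_t^+ f_{i,j}^m = f'(\n_{i,j}^m)\, D_t^+ \n_{i,j}^m + \frac{\Dt}{2}\, f''(\widetilde{\n}^{m+1/2}_{i,j})\, |D_t^+ \n_{i,j}^m|^2,
\end{equation*}
which is just the one-step Taylor expansion in time of $f$ and directly accounts for the temporal error term \eqref{eq:t9}. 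I then replace $D_t^+ \n_{i,j}^m$ using \eqref{seq1:nscheme} and the explicit form of the fluxes in \eqref{eq:fluxes}, so the whole question reduces to rewriting $f'(\n_{i,j}^m)\bigl[D_1^-F^{(1)}+D_2^-F^{(2)}\bigr]$ in divergence form plus nonpositive dissipation.

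For the centred part of the flux, I apply the identity $f'(a)(a-b) = f(a)-f(b) + \tfrac12 f''(\xi)(a-b)^2$ (with $\xi$ between $a$ and $b$) to each of the two neighbours. Summing the two contributions and using $\Div_h u = \Delta_h \W$, which follows from \eqref{eq:discv} and \eqref{seq2:Wscheme}, the centred flux produces the discrete divergence \eqref{eq:t1} together with $(f'(\n)\n - f(\n))\Delta_h \W$, the first half of \eqref{eq:t8}; the quadratic remainders are exactly \eqref{eq:t4}--\eqref{eq:t5} (still in divergence form) and the sign-definite pieces \eqref{eq:t6}--\eqref{eq:t7}. The upwind correction $\tfrac h2 |u| D^+\n$ is handled by the product rule $f'(\n_{i,j}) D^+\n_{i,j} = D^+ f(\n_{i,j}) - \tfrac h2 f''(\widehat \n) |D^+\n|^2$, which contributes the divergences \eqref{eq:t2}--\eqref{eq:t3} and doubles up the dissipation \eqref{eq:t6}--\eqref{eq:t7}. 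The source $\n\G(p)$ is kept as is and gives the second half of \eqref{eq:t8}.

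Next I sum the identity over $i,j$ and $m$, multiplied by $h^{d}\Dt$. The divergence-form terms telescope to boundary contributions that vanish under Neumann/periodic boundary conditions; the temporal left-hand side telescopes to $h^d\sum_{i,j}(f_{i,j}^{N_T} - f_{i,j}^0)$, which is bounded independently of $h$ by Lemma~\ref{lem:linfn_h} (since $\n_h$ is uniformly bounded in $L^\infty$ and $f$ is continuous); the term \eqref{eq:t8} is bounded in $L^1([0,T]\times\dom)$ by the uniform $L^\infty$ bounds on $\n_h$, $\G(p_h)$, and $\Delta_h\W_h$ from Lemmas \ref{lem:linfn_h} and \ref{lem:Wh}. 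Rearranging, the sign-definite terms \eqref{eq:t6}, \eqref{eq:t7}, \eqref{eq:t9} are all controlled, yielding \eqref{eq:dritt}; here the CFL assumption \eqref{eq:CFL2} enters to guarantee that the stronger condition $\Dt \le h/(16\max|\Grad_h \W|)$ dominates the temporal quadratic term, so that \eqref{eq:t9} is absorbed by the dissipation.

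For the decomposition $D_t^+ f_h = g_h + k_h$, I group the terms on the right-hand side: the divergence-form terms \eqref{eq:t1}--\eqref{eq:t5} are discrete divergences of objects that are bounded in $L^q$ for every $1\le q<\infty$ (by the uniform $L^\infty$ bounds on $\n_h$, $f'(\n_h)$, $|u|=|D^+\W_h|$, and the extra factor $h$ in \eqref{eq:t2}--\eqref{eq:t3} and $h^2$ in \eqref{eq:t4}--\eqref{eq:t5} compensates the $D^\pm \n_h/h$), so they define $k_h\in L^\infty([0,T];W^{-1,q}(\dom))$ uniformly in $h$, using the discrete Gagliardo--Nirenberg--Sobolev inequality cited after Lemma \ref{lem:Wh} for the stated range of $q$. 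The remaining sign-definite terms \eqref{eq:t6}, \eqref{eq:t7}, \eqref{eq:t9} together with \eqref{eq:t8} constitute $g_h$, which is bounded in $L^1([0,T]\times\dom)$ thanks to \eqref{eq:dritt} and the $L^\infty$ control of \eqref{eq:t8}. The main technical obstacle is the bookkeeping of the second-order Taylor remainders so that upwind and centred contributions combine with the correct signs and constants to match \eqref{eq:t6}--\eqref{eq:t7}; once those identities are verified pointwise, the rest is summation by parts and routine application of the $L^\infty$ estimates already established.
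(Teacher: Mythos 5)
Your proposal follows essentially the same route as the paper's proof: rewrite the scheme in non‑conservative form, Taylor‑expand $f$ in time and space when multiplying by $f'(\n_{i,j}^m)$, recombine the centred and upwind contributions into discrete divergences plus sign‑definite dissipation (using $\Div_h \Grad_h \W_h = \Delta_h \W_h$ to produce the $(f'(\n)\n - f(\n))\Delta_h\W$ term), and finally sum over indices, telescope the divergence terms, and absorb the temporal quadratic remainder with the strengthened CFL condition to obtain \eqref{eq:dritt} and the $g_h+k_h$ decomposition. The only cosmetic difference is that the paper specializes to $f(x)=x^2$ before deriving the key dissipation bounds and then passes to general convex $f$ via the uniform bound on $f''$, whereas you work with general $f$ directly; both are equivalent given the $L^\infty$ bound on $\n_h$.
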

\begin{proof}
We first rewrite the scheme for $\n_{i,j}^m$ as
\begin{align}\label{eq:sternefoifi}
\begin{split}
D_t^+ \n_{i,j}^m&= \frac{1}{2} u_{i+1/2,j}^m D^+_1 \n_{i,j}^m+\frac{1}{2} u_{i-1/2,j}^m D^-_1 \n_{i,j}^m\\
&\hphantom{=} +\frac{1}{2} v_{i,j+1/2}^m D^+_2 \n_{i,j}^m+\frac{1}{2} v_{i,j-1/2}^m D^-_2 \n_{i,j}^m\\
&\hphantom{=}+\frac{h}{2} D^-_1 \left[|u_{i+1/2,j}^m| D^+_1 \n_{i,j}^m\right] +\frac{h}{2} D^-_2 \left[|v_{i,j+1/2}^m| D^+_2 \n_{i,j}^m\right]\\
&\hphantom{=}+\n_{i,j}^m \Delta_h \W_{i,j}^m+\n_{i,j}^m \G(p_{i,j}^m).
\end{split}
\end{align}
%
%
%
%
%
Then, using the Taylor expansion,
\begin{equation*}
f(b)-f(a)=f'(a)(b-a)+f''(\widetilde{a})\frac{(a-b)^2}{2},
\end{equation*}
where $\widetilde{a}\in [\min\{a,b\},\max\{a,b\}]$, we can write
\begin{equation*}
\begin{split}
D_t^+ f_{i,j}^m&=f'(\n_{i,j}^m) D_t^+ \n_{i,j}^m+\frac{\Dt}{2} f''(\widetilde{\n}^{m+1/2}_{i,j}) |D_t^+ \n_{i,j}^m|^2\\
D_1^\pm f_{i,j}^m&=f'(\n_{i,j}^m) D_1^\pm \n_{i,j}^m\pm\frac{h}{2} f''(\widetilde{\n}_{i\pm 1/2,j}^m) |D_1^\pm \n_{i,j}^m|^2\\
D_2^\pm f_{i,j}^m&=f'(\n_{i,j}^m) D_2^\pm \n_{i,j}^m\pm\frac{h}{2} f''(\widetilde{\n}_{i,j\pm 1/2}^m) |D_2^\pm \n_{i,j}^m|^2\\
D_1^\pm f'(\n_{i,j}^m)&=f''(\widehat{\n}_{i\pm 1/2,j}^m) D_1^\pm \n_{i,j}^m\\
D_2^\pm f'(\n_{i,j}^m)&=f''(\widehat{\n}_{i,j\pm 1/2}^m) D_2^\pm \n_{i,j}^m,
\end{split}
\end{equation*}
where $\widetilde{\n}^{m+1/2}_{i,j}, \widetilde{\n}_{i\pm 1/2,j}^m, \widetilde{\n}_{i,j\pm 1/2}^m, \widehat{\n}_{i\pm 1/2,j}^m$ and $\widehat{\n}_{i,j\pm 1/2}^m$ are intermediate values. Hence, multiplying equation \eqref{eq:sternefoifi} by $f'(\n_{i,j}^m)$, it becomes

\begin{align*}
D_t^+ f_{i,j}^m &= \frac{\Dt}{2} f''(\widetilde{\n}^{m+1/2}_{i,j}) |D_t^+ \n_{i,j}^m|^2\\
&\hphantom{=}+ \frac{1}{2} u_{i+1/2,j}^m D^+_1 f_{i,j}^m -\frac{h}{4} f''(\widetilde{n}_{i+1/2,j}^m) u_{i+1/2,j}^m |D^+_1 \n_{i,j}^m|^2\\
&\hphantom{=}+ \frac{1}{2} u_{i-1/2,j}^m D^-_1 f_{i,j}^m +\frac{h}{4} f''(\widetilde{n}_{i-1/2,j}^m) u_{i-1/2,j}^m |D^-_1 \n_{i,j}^m|^2\\
&\hphantom{=}+ \frac{1}{2} v_{i,j+1/2}^m D^+_2 f_{i,j}^m -\frac{h}{4} f''(\widetilde{n}_{i,j+1/2}^m) v_{i,j+1/2}^m |D^+_2 \n_{i,j}^m|^2\\
&\hphantom{=}+ \frac{1}{2} v_{i,j-1/2}^m D^-_2 f_{i,j}^m +\frac{h}{4} f''(\widetilde{n}_{i,j-1/2}^m) v_{i,j-1/2}^m |D^-_2 \n_{i,j}^m|^2\\
&\hphantom{=}+\frac{h}{4} D^-_1 \left[f'(\n_{i,j}^m)|u_{i+1/2,j}^m| D^+_1 \n_{i,j}^m\right] -\frac{h}{4}f''(\widehat{\n}_{i-1/2,j}^m) |u_{i-1/2,j}^m||D^-_1 \n_{i,j}^m|^2\\
&\hphantom{=}+\frac{h}{4} D^-_2 \left[f'(\n_{i,j}^m)|v_{i,j+1/2}^m| D^+_2 \n_{i,j}^m\right] -\frac{h}{4}f''(\widehat{\n}_{i,j-1/2}^m) |v_{i,j-1/2}^m||D^-_2 \n_{i,j}^m|^2\\
&\hphantom{=}+\frac{h}{4} D^+_1 \left[f'(\n_{i,j}^m)|u_{i-1/2,j}^m| D^-_1 \n_{i,j}^m\right] -\frac{h}{4}f''(\widehat{\n}_{i+1/2,j}^m) |u_{i+1/2,j}^m||D^+_1 \n_{i,j}^m|^2\\
&\hphantom{=}+\frac{h}{4} D^+_2 \left[f'(\n_{i,j}^m)|v_{i,j-1/2}^m| D^-_2 \n_{i,j}^m\right] -\frac{h}{4}f''(\widehat{\n}_{i,j+1/2}^m) |v_{i,j+1/2}^m||D^+_2 \n_{i,j}^m|^2\\
&\hphantom{=}+f'(\n_{i,j}^m)\n_{i,j}^m \Delta_h \W_{i,j}^m+f'(\n_{i,j}^m) \n_{i,j}^m \G(p_{i,j}^m)\\
&= \frac{\Dt}{2} f''(\widetilde{\n}^{m+1/2}_{i,j}) |D_t^+ \n_{i,j}^m|^2\\
&\hphantom{=}+ \frac{1}{2} D^-_1\left( u_{i+1/2,j}^m \left(f_{i,j}^m+f_{i+1,j}^m\right)\right) + \frac{1}{2} D_2^-\left( v_{i,j+1/2}^m \left(f_{i,j}^m +f_{i,j+1}^m\right)\right)\\
&\hphantom{=}+\frac{h}{4} D^-_1 \left[f'(\n_{i,j}^m)|u_{i+1/2,j}^m| D^+_1 \n_{i,j}^m\right] +\frac{h}{4} D^-_2 \left[f'(\n_{i,j}^m)|v_{i,j+1/2}^m| D^+_2 \n_{i,j}^m\right]\\
&\hphantom{=}+\frac{h}{4} D^+_1 \left[f'(\n_{i,j}^m)|u_{i-1/2,j}^m| D^-_1 \n_{i,j}^m\right] +\frac{h}{4} D^+_2 \left[f'(\n_{i,j}^m)|v_{i,j-1/2}^m| D^-_2 \n_{i,j}^m\right]\\
&\hphantom{=}-\frac{h^2}{4}D^-_1\left[ f''(\widetilde{n}_{i+1/2,j}^m) u_{i+1/2,j}^m |D^+_1 \n_{i,j}^m|^2\right]\\
&\hphantom{=}-\frac{h^2}{4}D^-_2\left[ f''(\widetilde{n}_{i,j+1/2}^m) v_{i,j+1/2}^m |D^+_2 \n_{i,j}^m|^2\right]\\
&\hphantom{=} -\frac{h}{4}f''(\widehat{\n}_{i-1/2,j}^m) |u_{i-1/2,j}^m||D^-_1 \n_{i,j}^m|^2-\frac{h}{4}f''(\widehat{\n}_{i,j-1/2}^m) |v_{i,j-1/2}^m||D^-_2 \n_{i,j}^m|^2\\
&\hphantom{=} -\frac{h}{4}f''(\widehat{\n}_{i+1/2,j}^m) |u_{i+1/2,j}^m||D^+_1 \n_{i,j}^m|^2-\frac{h}{4}f''(\widehat{\n}_{i,j+1/2}^m) |v_{i,j+1/2}^m||D^+_2 \n_{i,j}^m|^2\\
&\hphantom{=}+(f'(\n_{i,j}^m)\n_{i,j}^m-f_{i,j}^m) \Delta_h \W_{i,j}^m+f'(\n_{i,j}^m) \n_{i,j}^m \G(p_{i,j}^m).
\end{align*}
which implies \eqref{eq:t1}--\eqref{eq:t9}. In particular, for $f(x)=x^2$, this becomes
\begin{align}\label{eq:L2ent}
\begin{split}
D_t^+ f_{i,j}^m &= \Dt |D_t^+ \n_{i,j}^m|^2\\
&\hphantom{=}+ \frac{1}{2} D^+_1\left( u_{i-1/2,j}^m  \left(f_{i,j}^m+f_{i-1,j}\right)\right)+ \frac{1}{2}D^+_2\left( v_{i,j-1/2}^m\left( f_{i,j}^m+f_{i,j-1}\right)\right)\\
&\hphantom{=} -\frac{h^2}{2} D^-_1\left[u_{i+1/2,j}^m |D^+_1 \n_{i,j}^m|^2\right]-\frac{h^2}{2}D^-_2\left[ v_{i,j+1/2}^m |D^+_2 \n_{i,j}^m|^2\right]\\
&\hphantom{=}+\frac{h}{2} D^-_1 \left[\n_{i,j}^m|u_{i+1/2,j}^m| D^+_1 \n_{i,j}^m\right]+\frac{h}{2} D^-_2 \left[\n_{i,j}^m|v_{i,j+1/2}^m| D^+_2 \n_{i,j}^m\right]  \\
&\hphantom{=}+\frac{h}{2} D^+_1 \left[\n_{i,j}^m|u_{i-1/2,j}^m| D^-_1 \n_{i,j}^m\right] +\frac{h}{2} D^+_2 \left[\n_{i,j}^m|v_{i,j-1/2}^m| D^-_2 \n_{i,j}^m\right]\\
&\hphantom{=}-\frac{h}{2} |u_{i-1/2,j}^m||D^-_1 \n_{i,j}^m|^2-\frac{h}{2}|v_{i,j-1/2}^m||D^-_2 \n_{i,j}^m|^2\\
&\hphantom{=} -\frac{h}{2} |u_{i+1/2,j}^m||D^+_1 \n_{i,j}^m|^2-\frac{h}{2}|v_{i,j+1/2}^m||D^+_2 \n_{i,j}^m|^2\\
&\hphantom{=}+ f_{i,j}^m \Delta_h \W_{i,j}^m+2 f_{i,j}^m \G(p_{i,j}^m),
\end{split}
\end{align}
We estimate the first term on the right hand side of the inequality inserting \eqref{eq:sternefoifi},
\begin{align*}
|D_t^+ \n_{i,j}^m|^2&\leq 2 \biggl| \frac{1}{2} u_{i+1/2,j}^m D^+_1 \n_{i,j}^m+\frac{1}{2} u_{i-1/2,j}^m D^-_1 \n_{i,j}^m +\frac{1}{2} v_{i,j+1/2}^m D^+_2 \n_{i,j}^m\\
&\hphantom{\leq 2 \biggl| }+\frac{1}{2} v_{i,j-1/2}^m D^-_2 \n_{i,j}^m+\frac{h}{2} D^-_1 \left[|u_{i+1/2,j}^m| D^+_1 \n_{i,j}^m\right] +\frac{h}{2} D^-_2 \left[|v_{i,j+1/2}^m| D^+_2 \n_{i,j}^m\right]\biggr|^2 \\
&\hphantom{\leq }+2\bigl|\n_{i,j}^m \Delta_h \W_{i,j}^m+\n_{i,j}^m \G(p_{i,j}^m)\bigr|^2\\
&\leq 4\biggl|\frac{1}{2} u_{i+1/2,j}^m D^+_1 \n_{i,j}^m+\frac{1}{2} u_{i-1/2,j}^m D^-_1 \n_{i,j}^m+\frac{h}{2} D^-_1 \left[|u_{i+1/2,j}^m| D^+_1 \n_{i,j}^m\right]\biggr|^2\\
&\hphantom{\leq }+4\biggl|\frac{1}{2} v_{i,j+1/2}^m D^+_2 \n_{i,j}^m+\frac{1}{2} v_{i,j-1/2}^m D^-_2 \n_{i,j}^m+\frac{h}{2} D^-_2 \left[|v_{i,j+1/2}^m| D^+_2 \n_{i,j}^m\right]\biggr|^2\\
&\hphantom{\leq } +2\bigl|\n_{i,j}^m \Delta_h \W_{i,j}^m+\n_{i,j}^m \G(p_{i,j}^m)\bigr|^2\\
&\leq  8 \big| u_{i+1/2,j}^m D^+_1 \n_{i,j}^m\big|^2+8\big| u_{i-1/2,j}^m D^-_1 \n_{i,j}^m\big|^2+ 8 \big| v_{i,j+1/2}^m D^+_2 \n_{i,j}^m\big|^2\\
&\hphantom{\leq }+8\big| u_{i,j-1/2}^m D^-_2 \n_{i,j}^m\big|^2+2\bigl|\n_{i,j}^m \Delta_h \W_{i,j}^m+\n_{i,j}^m \G(p_{i,j}^m)\bigr|^2\\
&\leq 8\max_{i,j} |\Grad_h \W_{i,j}^m|\bigl\{ | u_{i+1/2,j}^m|\,  |D^+_1 \n_{i,j}^m\big|^2+| u_{i-1/2,j}^m|\,  |D^-_1 \n_{i,j}^m\big|^2\\
&\hphantom{\leq 8\max_{i,j} |\Grad_h \W_{i,j}^m|\bigl\{}+| v_{i,j+1/2}^m|\,  |D^+_2 \n_{i,j}^m\big|^2+| v_{i,j-1/2}^m|\,  |D^-_2 \n_{i,j}^m\big|^2\bigr\}\\
&\hphantom{\leq  }+2\bigl|\n_{i,j}^m \Delta_h \W_{i,j}^m+\n_{i,j}^m \G(p_{i,j}^m)\bigr|^2
\end{align*}
Thus if we assume that $\Dt$ satisfies the CFL-condition \eqref{eq:CFL2}, we have
\begin{equation*}
\begin{split}
{\Dt}\sum_{i,j} |D_t^+ \n_{i,j}^m|^2&\leq h \sum_{i,j} \bigl\{ |u_{i+1/2,j}^m| |D_1^+ \n_{i,j}^m|^2+ |v_{i,j+1/2}^m| |D_2^+\n_{i,j}^m|^2\bigr\} \\
&\quad+ h\sum_{i,j} \big|\n_{i,j}^m\Delta_h \W_{i,j}^m+ \n_{i,j}^m \G(p_{i,j}^m)\big|^2
\end{split}
\end{equation*}
Now summing \eqref{eq:L2ent} over all $i,j$, multiplying with $h^d$ and using the latter inequality, we obtain
\begin{equation*}
\begin{split}
h^d D_t^+\sum_{i,j} f_{i,j}^m&=-h^{d+1} \sum_{i,j}\left(|u_{i-1/2,j}^m| |D^-_1 \n_{i,j}^m|^2+ |v_{i,j-1}^m| |D^-_2 \n_{i,j}^m|^2\right)\\
&\hphantom{= }+h^d\Dt \sum_{i,j} |D_t^+\n_{i,j}^m|^2+ h^d\sum_{i,j} f_{i,j}^m\left(\Delta_h \W_{i,j}^m+2\G(p_{i,j}^m)\right) \\
&\leq h^d\sum_{i,j} f_{i,j}^m\left(\Delta_h \W_{i,j}^m+2\G(p_{i,j}^m)\right)\\
&\hphantom{\leq }+h^{d+1}\sum_{i,j} |\n_{i,j}^m\Delta_h \W_{i,j}^m+\n_{i,j}^m\G(p_{i,j}^m)|^2 \\
&\leq C,
\end{split}
\end{equation*}
where $C>0$ is a constant independent of $h$, thanks to the $L^\infty$-bounds on $\n_h$ and $\Delta_h \W_h$ obtained in Lemma \ref{lem:linfn_h} and  \ref{lem:Wh}. This implies that
\begin{align*}
&h^{d+1}\Dt \sum_{m=0}^{N_T}\sum_{i,j}\left(|u_{i-1/2,j}^m| |D^-_1\n_{i,j}^m|^2+ |v_{i,j-1/2}^m| |D^-_2 \n_{i,j}^m|^2\right)\leq C\\
 & h^d \Dt^2 \sum_{m=0}^{N_T}\sum_{i,j} |D_t^+ \n_{i,j}^m|^2\leq C.
\end{align*}
and therefore using H\"{o}lder's inequality and the uniform $L^\infty$-bounds on $\n_h$, \eqref{eq:dritt}. Using summation by parts, we realize that the other terms, \eqref{eq:t1} -- \eqref{eq:t5} are in $L^\infty([0,T];W^{-1,q}(\dom))$ for $q\in [1,2^*)$ where $2^*=2d/(d-2)$ if $d\geq 3$ and any finite number greater than one if $d=2$.
\end{proof}
\begin{remark}
The preceeding lemma implies that the forward time difference of the approximation of the pressure $D_t^+ p_h=D_t^+|\n_h|^\gamma$ is of the form $D_t^+p_h=g_h+k_h$ where $g_h\in L^1([0,T]\times \dom)$ and $k_h\in L^\infty([0,T]; W^{-1,q}(\dom))$ for any $1\leq q<\infty$ if $d=2$ and for $1\leq q \leq q^*=2d/(d-2)$ if $d>2$, uniformly in $h>0$. Using this, we have that $D_t^+\W_h=U_h+V_h$ where $U_h$ and $V_h$ solve
\begin{equation*}
-\mu\Delta_h U_h +U_h=g_h,\quad\mathrm{and}\quad -\mu\Delta_h V_h +V_h=k_h.
\end{equation*}
By Lemma \ref{lem:ellL1}, we have $U_h,\Grad_h U_h \in L^1([0,T]; L^q(\dom))$ for $1\leq q\leq d/(d-1)$ and by standard results, $V_h$, $\Grad_h V_h\in L^\infty([0,T]; L^2(\dom))$. Hence $D_t \W_h, D_t \Grad_h \W_h\in  L^1([0,T]; L^q(\dom))+ L^\infty([0,T]; L^2(\dom))$.
\end{remark}
\begin{remark}[CFL-condition]
The estimates from Lemma \ref{lem:Wh} imply that the velocity $\vc{u}_h:=\Grad_h\W_h\in L^\infty([0,T];L^{2^*}(\dom))$ uniformly in $h>0$, $2^*=2d/(d-2)$ or any number in $[1,\infty)$ if $d=2$, using the Sobolev embedding theorem. Using an inverse inequality, we can bound it in the $L^\infty(\Dom)$-norm as follows:
\begin{equation*}
\max_{(x,t)\in\Dom}|\vc{u}_h|\leq C h^{-\frac{d}{2^*}}\left(\int_{\dom}|\vc{u}_h|^{2^*} dx\right)^{\frac{1}{2^*}}\leq C h^{-\frac{d}{2^*}}
\end{equation*}
Thus the time step size $\Dt$ is of order $\mathcal{O}(h^{1+d/2^*})$. In practice a linear CFL-condition seems to work well though.
\end{remark}

\subsection{Passing to the limit $h\rightarrow 0$}
The estimates of the previous (sub)sections allow us to pass to the limit $h\rightarrow 0$ in a subsequence still denoted $h$,
\begin{align*}
\n_h\weak \n\geq 0,&\quad \mathrm{in}\, L^q([0,T]\times \dom), \, 1\leq q<\infty,\\
p_h\weak \overline{p}\geq 0,&\quad \mathrm{in}\, L^q([0,T]\times \dom), \, 1\leq q<\infty,
\end{align*} 
where $p_h:=\n_h^\gamma$ and $0\leq \n,\overline{p}\in L^{\infty}([0,T]\times \dom)$. Using the ``discretized'' Aubin-Lions lemma \ref{lem:A-L-lem} for $\W_h$ and $\Grad_h \W_h$, we obtain strong convergence of a subsequence in $L^q([0,T]\times\dom)$ for any $q\in [0,\infty)$ in the case of $\W_h$ and $1 \leq q\leq 2^*$ in the case of $\Grad_h \W_h$ ($2^*=2d/(d-2)$ if $d\geq 3$ and any finite number greater than or equal to one if $d=2$), to limit functions $\W, \Grad\W\in L^q([0,T]\times \dom)$. Moreover, from the estimates in Lemma \ref{lem:Wh} we obtain that $\W\in L^{\infty}([0,T]\times\dom)\cap L^{\infty}([0,T];H^2(\dom))$.
Hence we have that $(\n,\W,\overline{p})$ satisfy for any $\varphi,\psi \in C^1([0,T]\times\dom)$,
\begin{align*}
\lint{\n\varphi_t-\n\Grad\W\cdot \Grad\varphi}&=-\lint{\overline{\n\G({p})}\varphi}\\
\lint{\W\psi+\mu\Grad\W\cdot\Grad\psi}&=\lint{\overline{p}\,\psi}
\end{align*}
where $\overline{\n\G({p})}$ is the weak limit of $\n_h\G(p_h)$. %
To conclude that the limit $(\n,\W,p)$ is a weak solution of \eqref{HeleShaw}, we proceed as in the previous Section \ref{S4} and show that $\n_h$ in fact converges strongly: First, we recall that the limit $\n$ satisfies \eqref{eq:faen1}.
%
%

On the other hand, from \eqref{eq:L2ent}, we obtain (under the CFL-condtion \eqref{eq:CFL2})
\begin{align}\label{eq:schissi}
\begin{split}
D_t^+|\n_{i,j}^m|^2 &\leq \frac{1}{2} D^+_1\left( u_{i-1/2,j}^m  \left(|\n_{i,j}^m|^2+|\n_{i-1,j}^m|^2\right)\right)\\
&\hphantom{=}+ \frac{1}{2}D^+_2\left(v_{i,j-1/2}^m\left( |\n_{i,j}^m|^2+|\n_{i,j-1}^m|^2\right)\right)\\
&\hphantom{=} -\frac{h^2}{2} D^-_1\left[u_{i+1/2,j}^m |D^+_1 \n_{i,j}^m|^2\right]-\frac{h^2}{2}D^-_2\left[ v_{i,j+1/2}^m |D^+_2 \n_{i,j}^m|^2\right]\\
&\hphantom{=}+\frac{h}{2} D^-_1 \left[\n_{i,j}^m|u_{i+1/2,j}| D^+_1 \n_{i,j}^m\right]+\frac{h}{2} D^-_2 \left[\n_{i,j}^m|v_{i,j+1/2}| D^+_2 \n_{i,j}^m\right]  \\
&\hphantom{=}+\frac{h}{2} D^+_1 \left[\n_{i,j}^m|u_{i-1/2,j}| D^-_1 \n_{i,j}^m\right] +\frac{h}{2} D^+_2 \left[\n_{i,j}^m|v_{i,j-1/2}| D^-_2 \n_{i,j}^m\right]\\
&\hphantom{=}+|\n_{i,j}^m|^2 \Delta_h \W_{i,j}^m+2|\n_{i,j}^m|^2 \G(p_{i,j}^m),
\end{split}
\end{align}
Considering this inequality in terms of the piecewise constant functions $\n_h$, $\W_h$ and $p_h$, multiplying it with a nonnegative $C^1$-test function $\varphi$, integrating and then passing to the limit $h\rightarrow 0$, we obtain (using the bounds \eqref{eq:dritt}, the weak convergence of $\n_h$ and $p_h$ and the strong convergence of $\W_h$ and $\Grad_h\W_h$),
\begin{equation}\label{eq:faen2}
-\lint{\overline{\n^2} \varphi_t- \overline{\n^2}\Grad \W\cdot\Grad \varphi} \leq \lint{\left(\overline{\n^2\Delta \W}+ 2 \overline{\n^2 \G(p)}\right)\varphi},
\end{equation}
where $\overline{\n^2}$ denotes the weak limit of $\n_h^2$ and $\overline{\n^2\Delta \W}$ and $\overline{\n^2 \G(p)}$ are the weak limits of $\n_h^2\Delta_h \W_h$ and $\n_h^2 \G(p_h)$ respectively.

Adding \eqref{eq:faen1} and \eqref{eq:faen2}, we have
\begin{multline*}
-\lint{\left(\overline{\n^2}-\n^2 \right)\varphi_t-\left(\overline{\n^2} -\n^2\right)\Grad\W\cdot \Grad\varphi}\\
\leq \lint{\left(2 \overline{\n^2 \G(p)}-2n\overline{\n\G({p})}+\overline{\n^2\Delta \W}-n^2\Delta\W\right)\varphi}.
\end{multline*}
We now choose smooth test functions $\varphi_\eps$ approximating $\varphi(t,x)=\mathbf{1}_{[0,\tau]}(t)$, where $\tau\in (0,T]$, in this inequality and then pass to the limit $\eps\rightarrow 0$ to obtain
\begin{multline}\label{eq:schafseckel}
\int_{\dom} \left(\overline{\n^2}-\n^2\right)\!(\tau) dx-\int_{\dom} \left(\overline{\n^2}(0,x)-\n^2(0,x)\right)\! dx\\
\leq \int_0^\tau\!\!\int_{\dom}\!\left(2 \overline{\n^2 \G(p)}-2n\overline{\n\G({p})}+\Delta\W\left(\overline{\n^2}-\n^2\right)+\overline{\n^2\Delta \W}-\overline{n^2}\Delta\W\right)\! dx\, dt.
\end{multline}
By convexity of $f(x)=x^2$, we have $\overline{\n^2}\geq \n^2$, on the other hand, the discrete $L^2$-entropy inequality, \eqref{eq:schissi}, implies
\begin{equation*}
\int_{\dom} |\n_h(\tau,x)|^2\, dx\leq \int_{\dom} |\n_h^0|^2 dx+\int_0^\tau\!\!\int_{\dom}\left(|\n_h|^2\Delta_h\W_h+ 2|\n_h|^2\G(p_h)\right)dx dt,
\end{equation*}
which gives, passing to the limit $h\rightarrow 0$,
\begin{equation*}
\int_{\dom} \overline{|\n|^2}(\tau,x)\, dx\leq \int_{\dom} |\n_0|^2dx+\int_0^\tau\!\!\int_{\dom}\left(\overline{|\n|^2\Delta\W}+ 2\overline{|\n|^2\G(p)}\right)dx dt.
\end{equation*}
Letting $\tau\rightarrow 0$, the second term on the right hand side vanishes (as the integrand is bounded), and we obtain
\begin{equation*}
\int_{\dom} \overline{|\n|^2}(0,x)\, dx\leq \int_{\dom} |\n_0|^2 dx
\end{equation*}
We deduce that $\overline{|\n|^2}(0,\cdot)= |\n_0|^2$ almost everywhere and that therefore the second term on the left hand side of \eqref{eq:schafseckel} is zero.
%
We have already estimated the first two terms on the right hand side of \eqref{eq:schafseckel} in \eqref{eq:estimate1} and \eqref{eq:helvete}. To bound the other term, we use a discretized version of Lemma \ref{lem:effve}:
%
%
%
%
\begin{lemma}\label{lem:effv}
The weak limits $(\n,\W,\weakl{p})$ of the sequences $\{(\n_h,\W_h,p_h)\}_{h>0}$ satisfy for any smooth function $S:\R\rightarrow\R$,
\begin{equation}\label{eq:gugus}
\int_{\dom}\left(\weakl{S(\n)\Delta\W}- \weakl{S(\n)}\Delta\W\right) dx = \frac{1}{\mu} \int_{\dom}\left(\weakp{p}{S(\n)}- \weakl{pS(\n)}\right)dx
\end{equation}
where $\overline{S(\n)\Delta\W}$, $\overline{S(\n)}$, $\overline{pS(\n)}$ are the weak limits of $S(\n_h)\Delta_h\W_h$, $S(\n_h)$ and $p_hS(\n_h)$ respectively.
\end{lemma}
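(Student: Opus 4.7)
The proof plan mirrors the continuous argument for Lemma~\ref{lem:effve}, now at the discrete level. I will derive two identities: one obtained from the discrete Brinkman equation \eqref{seq2:Wscheme} after multiplication by $S(\n_h)$ and passage to the limit, and one obtained by pairing the limit Brinkman equation $-\mu\Delta\W+\W=\overline{p}$ against $\overline{S(\n)}$. The desired relation \eqref{eq:gugus} then falls out as their difference.

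First I would multiply \eqref{seq2:Wscheme} by $S(\n_{i,j}^m)$ and by $h^d$, then sum over $i,j$. Rewriting the result in terms of the piecewise constant interpolations $(\n_h,\W_h,p_h)$ yields, for a.e.\ $t\in[0,T]$,
\begin{equation*}
\int_{\dom} \mu\, S(\n_h) \Delta_h \W_h \, dx - \int_{\dom} \W_h\, S(\n_h) \, dx = -\int_{\dom} p_h\, S(\n_h) \, dx.
\end{equation*}
Letting $h\to 0$, the second integral converges to $\int_\dom \W\,\overline{S(\n)}\,dx$ by the strong $L^q$-convergence of $\W_h$ paired against the weak convergence of $S(\n_h)$; the right-hand side converges to $-\int_{\dom}\overline{p\,S(\n)}\,dx$ by definition; and the first integral converges to $\mu\int_{\dom}\overline{S(\n)\Delta\W}\,dx$, where $\overline{S(\n)\Delta\W}$ exists as a weak-$\star$ $L^\infty$ limit along a subsequence thanks to the uniform bounds on $\n_h$ and $\Delta_h\W_h$ from Lemmas~\ref{lem:linfn_h} and~\ref{lem:Wh}. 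This produces
\begin{equation*}
\int_{\dom}\mu\,\overline{S(\n)\Delta\W}\,dx - \int_{\dom}\W\,\overline{S(\n)}\,dx = -\int_{\dom}\overline{p\,S(\n)}\,dx.
\end{equation*}

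Next I would pair the limit Brinkman equation, which holds a.e.\ in $\Dom$ since $\W\in L^\infty(0,T;H^2(\dom))$, against the bounded function $\overline{S(\n)}\in L^\infty(\Dom)$. This is a legitimate $L^2$-pairing and gives
\begin{equation*}
\int_{\dom}\mu\Delta\W\,\overline{S(\n)}\,dx - \int_{\dom}\W\,\overline{S(\n)}\,dx = -\int_{\dom}\overline{p}\,\overline{S(\n)}\,dx.
\end{equation*}
Subtracting this from the previous identity causes the $\W\,\overline{S(\n)}$ terms to cancel and yields \eqref{eq:gugus} directly.

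The principal technical point is the limit passage in the cross term $\int_{\dom}\mu\, S(\n_h)\Delta_h\W_h\,dx$, a product of two merely weakly convergent sequences; one must know a priori that the product has a well-defined weak limit. This is guaranteed by the uniform $L^\infty(\Dom)$ control on both factors provided by Lemmas~\ref{lem:linfn_h} and~\ref{lem:Wh}, so Banach--Alaoglu delivers weak-$\star$ convergence along a subsequence. All remaining passages to the limit are standard weak-strong pairings using the strong convergence of $\W_h$ (and hence $\Grad_h\W_h$) established via the discretized Aubin--Lions lemma.
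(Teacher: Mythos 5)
Your proposal is correct and follows essentially the same two-identity-and-subtract strategy as the paper: derive one relation by multiplying the discrete Brinkman equation \eqref{seq2:Wscheme} by $S(\n_h)$, summing, and passing to the limit, and a second by pairing the limit Brinkman equation against the weak limit of $S(\n_h)$, then subtract. The only cosmetic difference is in the second step: the paper mollifies $S(\n_h)$ by $\psi_\delta$ so it can be plugged in as a test function in the weak formulation and then sends $\delta\to 0$, $h\to 0$, whereas you exploit the a.e.\ validity of $-\mu\Delta\W+\W=\overline p$ (from $\W\in L^\infty(0,T;H^2(\dom))$) to pair directly against $\overline{S(\n)}\in L^\infty$ as an $L^2$-duality, which is slightly more direct and equally valid.
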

Applying this lemma to the last term in \eqref{eq:schafseckel2} with $S(\n)=\n^2$, we can estimate it by
\begin{align*}
\int_{0}^\tau\int_{\dom}\left(\weakl{\n^2\Delta\W}- \weakl{\n^2}\Delta\W\right) dx &= \frac{1}{\mu} \int_{\dom}\left(\weakp{p}{\n^2}- \weakl{p \n^2}\right)dxdt\\
&= \frac{1}{\mu} \int_{\dom}\left(\weakp{\n^{\gamma}}{\n^2}- \weakl{ \n^{2+\gamma}}\right)dxdt\\
&\leq 0,
\end{align*}
using again that by Exercise 3.37 in \cite{Novotny-Stras-2004}, $\weakp{\n^{\gamma}}{\n^2}\leq \weakl{ \n^{2+\gamma}}$. Thus,
\begin{equation*}
\int_{\dom} \left(\weakl{\n^2}-\n^2\right)\!(\tau) dx \leq \left(2\alpha+\frac{P_M}{\mu}\right) \int_0^\tau\!\!\int_{\dom}\left(\weakl{\n^2}-\n^2\right) dx\, dt.
\end{equation*}
Gr\"{o}nwall's inequality thus implies
\begin{equation*}
\int_{\dom} \left(\overline{\n^2}-\n^2\right)\!(\tau) dx\leq 0
\end{equation*}
By convexity of the function $f(x)=x^2$ we also have $\n^2\leq \overline{\n^2}$ almost everywhere and hence
\begin{equation*}
\overline{\n^2}=\n^2
\end{equation*}
almost everywhere in $\Dom$. Therefore we conclude that the functions $\n_h$ converge strongly to $\n$ almost everywhere, thus also $\overline{p}=\n^\gamma$ and so the limit $(\n,\W,\overline{p})$ is a weak solution of the equations \eqref{HeleShaw}.
\begin{proof}[Proof of Lemma \ref{lem:effv}]
We multiply the equation for $\W_h$ by $S(\n_h)$ and integrate it over the spatial domain $\dom$,
  \begin{equation*}
  \int_{\dom}\mu\Delta_h\W_h\,S(\n_h)-\W_hS(\n_h)\,dx = -\int_{\dom} p_h S(\n_h) \,dx. 
  \end{equation*}
Passing to the limit $h\rightarrow 0$ in the last equation, we obtain
\begin{equation}\label{eq:dubel}
 \int_{\dom}\mu\overline{\Delta\W S(\n)}-\W \overline{S(\n)}\,dx =-\int_{\dom} \overline{ p S(\n)} \,dx.
\end{equation}
On the other hand, using $[S(\n_h)\ast \psi_\delta](x)$, where $\psi_\delta$ is a smooth mollifier converging to a Dirac measure at zero when $\delta$ is sent to zero, as a test function in the weak formulation of the limit equation
\begin{equation*}
-\mu\Delta\W+\W= \overline{p},
\end{equation*}
and passing first to the limit $\delta\rightarrow 0$ and then $h\rightarrow 0$, we obtain
\begin{equation*}
\int_{\dom} \mu\Delta\W \overline{S(\n)} -\W\overline{S(\n)}\, dx=-\int_{\dom}  \weakp{p}{S(\n)}\, dx
\end{equation*}
Combining the last identity with \eqref{eq:dubel}, we obtain \eqref{eq:gugus}.
\end{proof}
\section{Numerical examples}
\label{S5}
To test the scheme in practice, we compute approximations for the following two examples.
\subsection{Gaussian initial data}
As a first example, we consider the initial data
\begin{equation}\label{eq:init1}
\n_0(x)=\frac{1}{2}\exp\left(-10\left(x_1^2+x_2^2\right)\right),
\end{equation}
on the domain $\dom=[-2.5,2.5]^2$ and $h=1/64$ with pressure law $p=\n^3$ and $\G(p)=1-p$ and $\mu=1$.
Strictly speaking, these are not homogeneous Neumann boundary conditions, but since the gradient of $\n_0$ near the boundary is very small, this works well in practice. 
\begin{figure}[ht]
  \centering
  \begin{tabular}{lr}
    \includegraphics[width=0.5\textwidth]{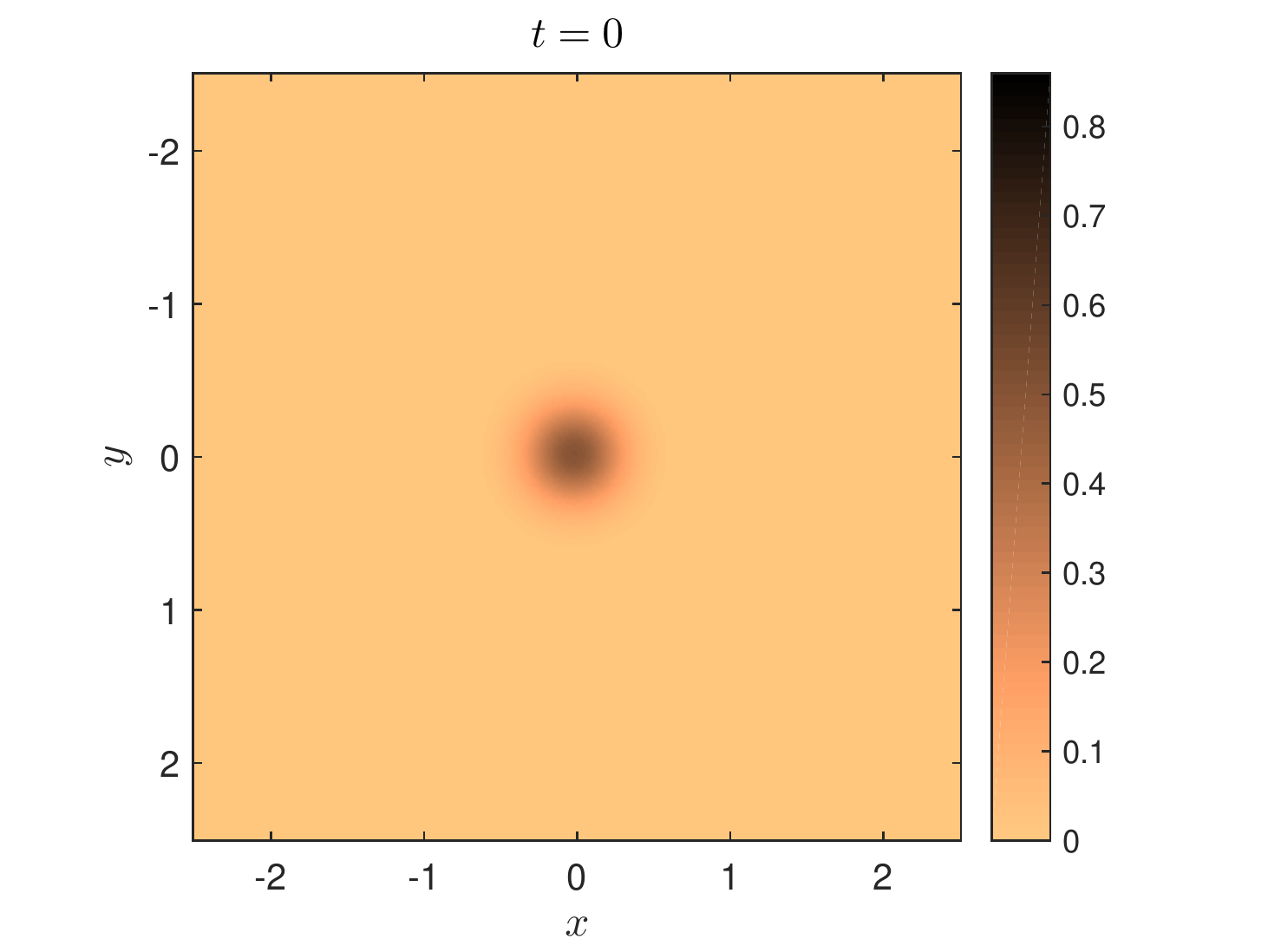}
    \includegraphics[width=0.5\textwidth]{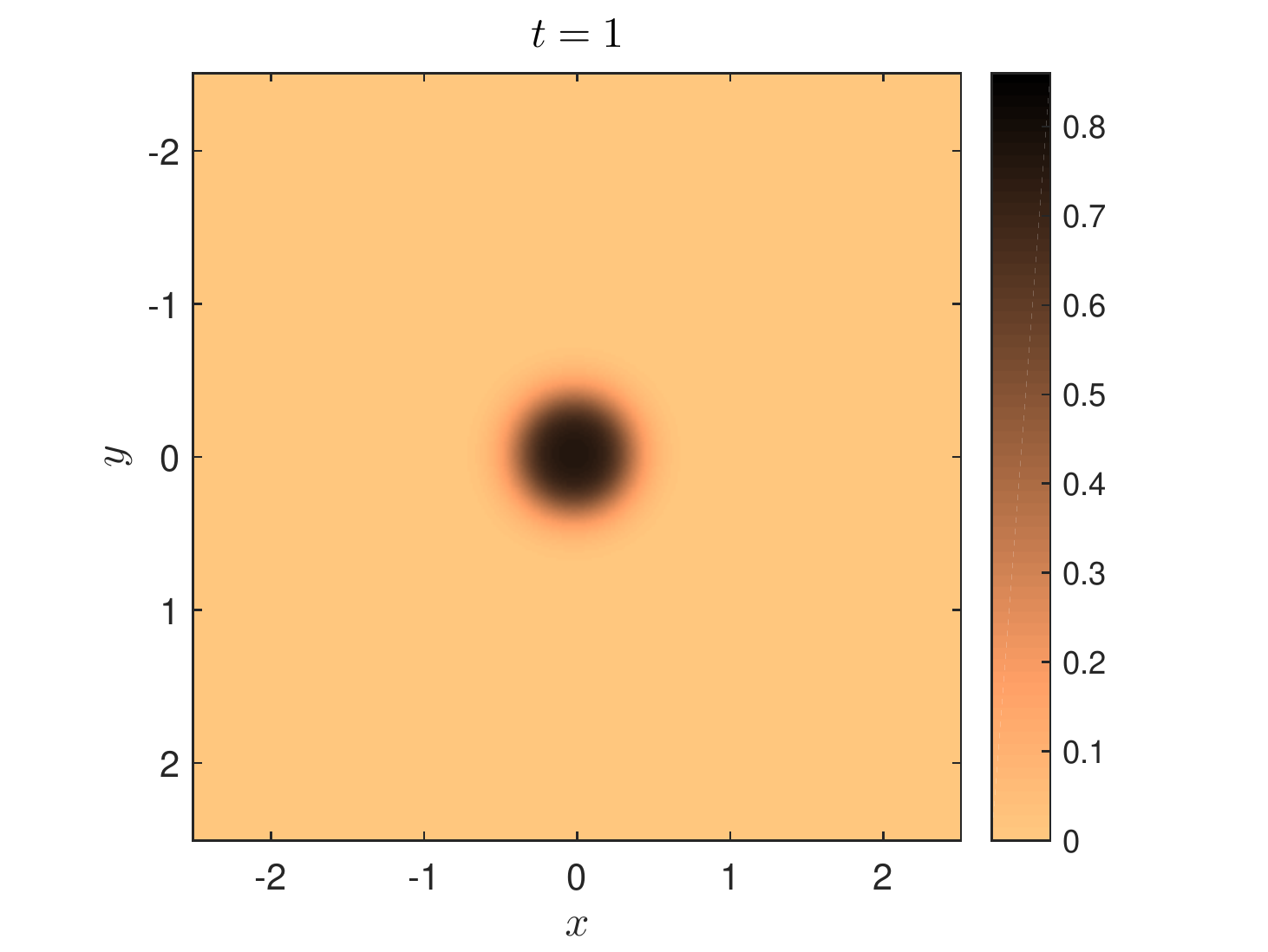}\\
    \includegraphics[width=0.5\textwidth]{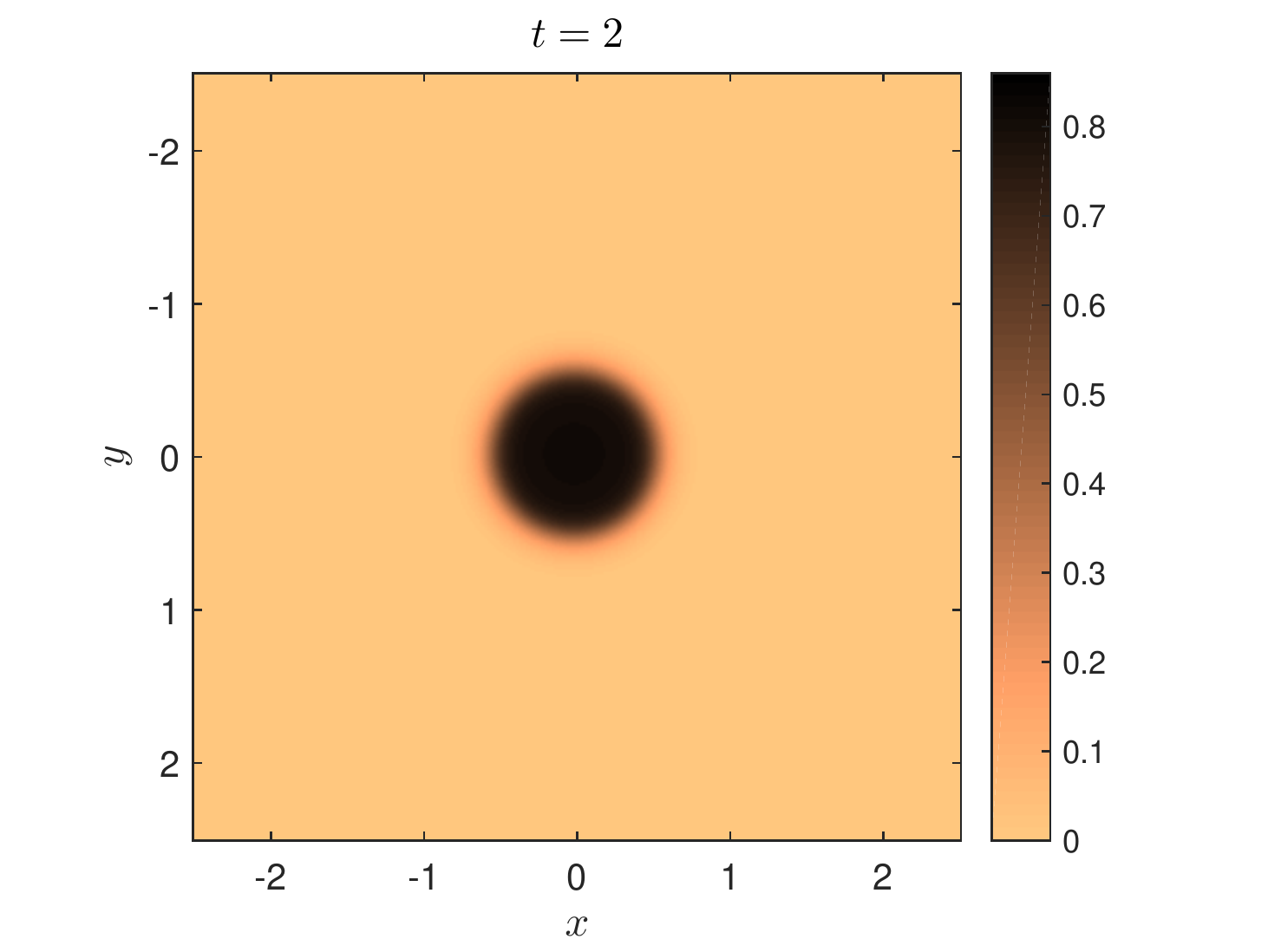}
    \includegraphics[width=0.5\textwidth]{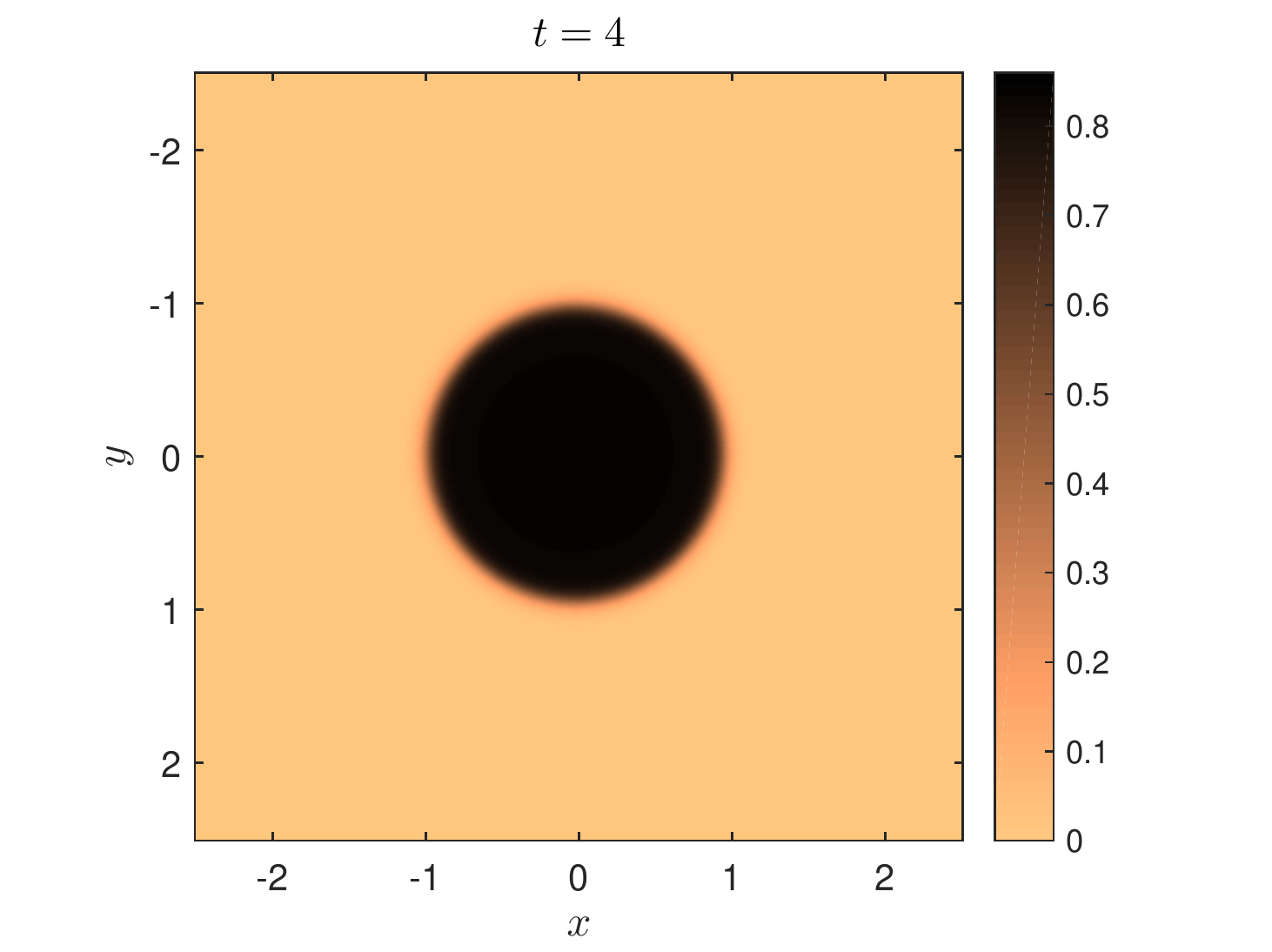}
  \end{tabular}
  \caption{The approximations of the cell density $\n$ for initial data \eqref{eq:init1} on $\dom=[-2.5,2.5]^2$ with mesh width $h=1/64$.}
  \label{fig:eg1}
\end{figure}

In Figure \ref{fig:eg1} we show the approximations at times $t=0,1,2,4$. We observe that the cell density in the middle first reaches the maximum possible and then starts spreading with a relatively narrow transition region between zero density and maximum density. 
\subsection{Two Gaussians}
As a second example, we use the inital data consisting of two Gaussian pulses with centers at $x=(0.7,0)$ and $x=(-0.6,0.2)$,
\begin{equation}\label{eq:init2}
\begin{split}
\n_0(x)&=\frac{1}{2}\exp\left(-10\left((x_1-0.7)^2+x_2^2\right)\right)\\
&\quad+\frac{1}{2}\exp\left(-20\left((x_1+0.6)^2+(x_2-0.2)^2\right)\right)
\end{split}
\end{equation}
on the same domain, $\dom=[-2.5,2.5]^2$, with $\mu=1$, pressure law $p=\n^{10}$ and $\G(p)=1-p$ and mesh width $h=1/64$.
\begin{figure}[ht]
  \centering
  \begin{tabular}{lr}
    \includegraphics[width=0.5\textwidth]{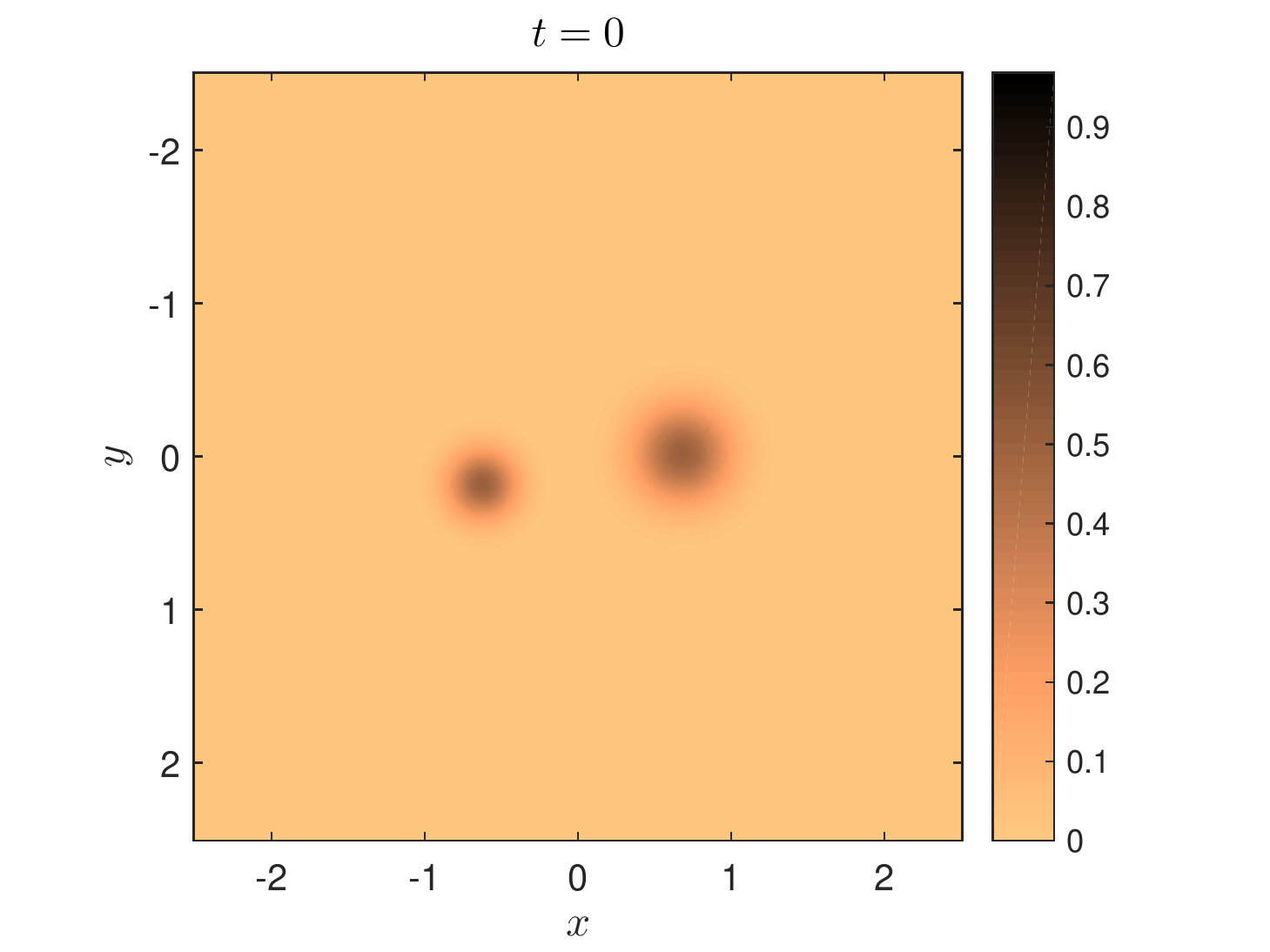}
    \includegraphics[width=0.5\textwidth]{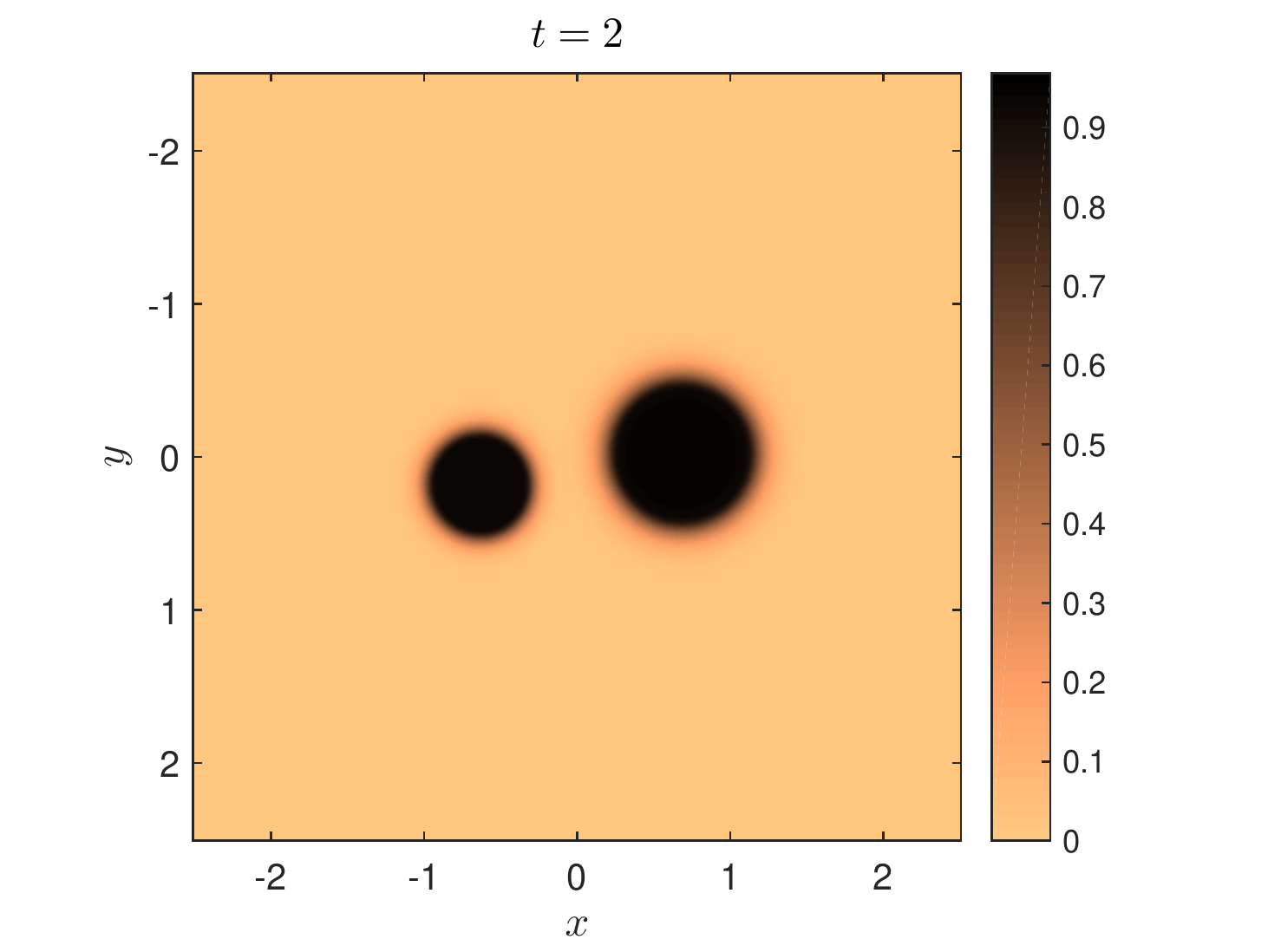}\\
    \includegraphics[width=0.5\textwidth]{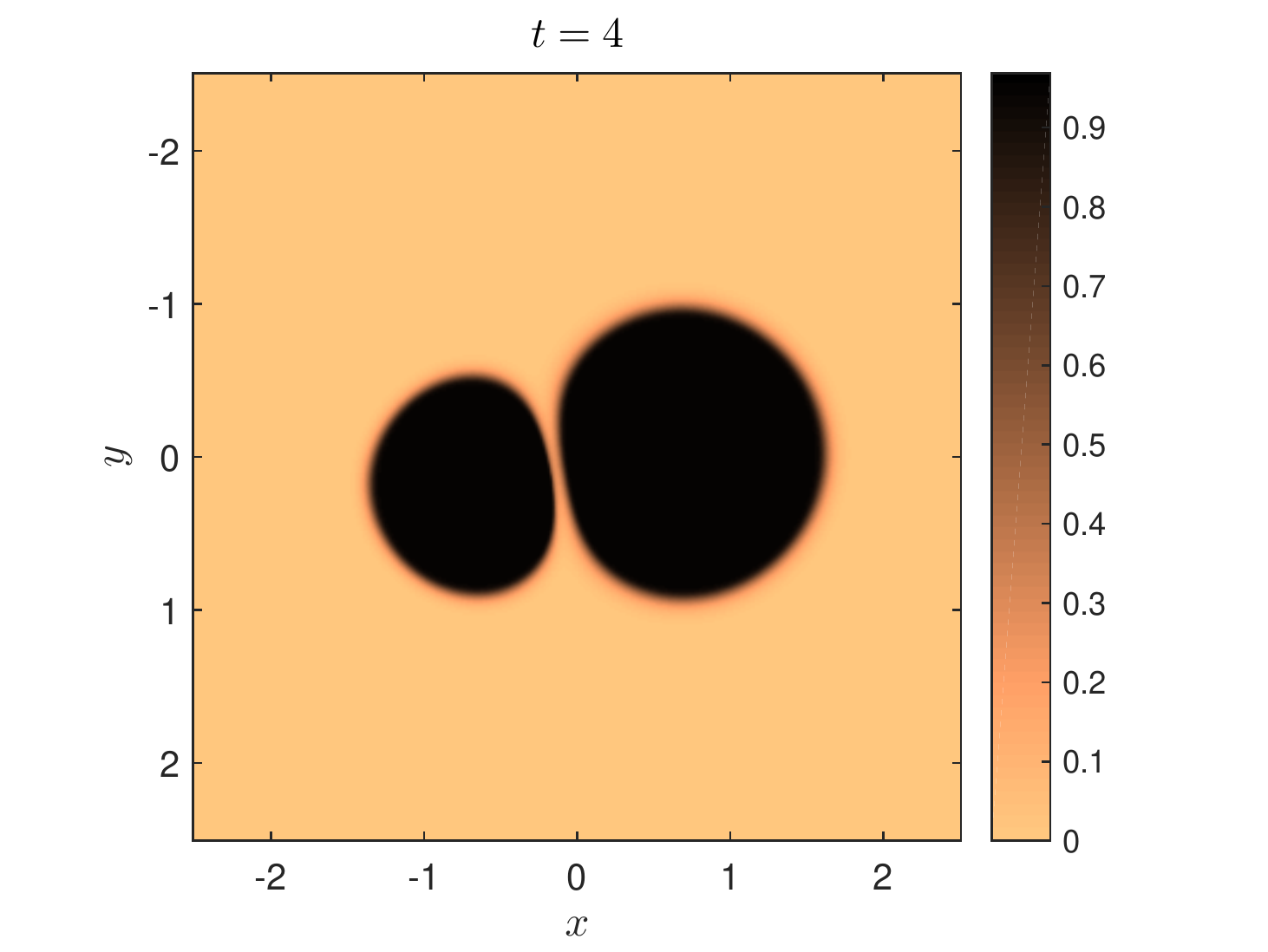}
    \includegraphics[width=0.5\textwidth]{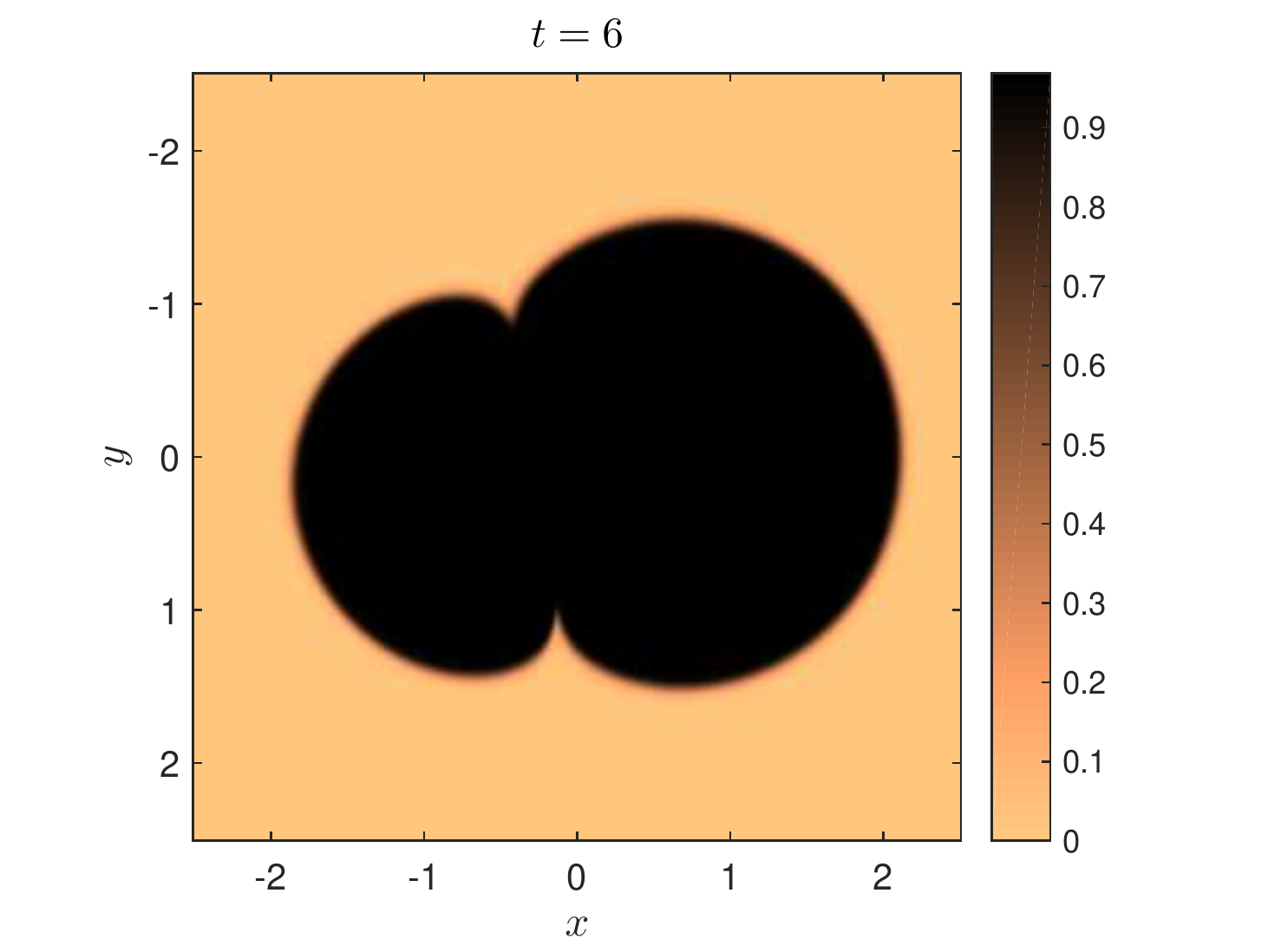}
  \end{tabular}
  \caption{The approximations of the cell density $\n$ for initial data \eqref{eq:init2} on $\dom=[-2.5,2.5]^2$ with mesh width $h=1/64$.}
  \label{fig:eg2}
\end{figure}
The approximations computed at times $t=0,2,4,6$ are shown in Figure \ref{fig:eg2}. The interface between the area with maximum cell density and zero cell density seems to be sharper than in the previous example, this appears to be caused by the pressure law with the higher exponent $\gamma$. Further tests with higher and lower exponents confirmed that assertion.

\appendix
\section{Discretized Aubin-Lions lemma}
\begin{lemma}\label{lem:A-L-lem}
Let $u_h:\dom\rightarrow \R^k$ be a piecewise constant function defined on a grid on $[0,T)\times \dom$, $\dom$ a bounded rectangular domain, satisfying
\begin{equation}\label{eq:al-ass1}
\lint{ |u_h|^q+|\Grad_h u_h|^q}\leq C
\end{equation}
for some $\infty>q>1$, uniformly with respect to $h>0$ and 
\begin{equation}\label{eq:al-ass2}
D_t u_h= A_h  f_h+  g_h + k_h,
\end{equation}
where $A_h$ is a first order linear finite difference operator, and $f_h, g_h, k_h:\dom\rightarrow \R^{d\times k}$ are piecewise constant functions, satisfying uniformly in $h>0$,

\begin{equation}\label{eq:al-ass3}
\lint{ |f_h|^{r_1}+|g_h|^{r_2}+|k_h|}\leq C,
\end{equation}
for some $\infty>r_1,r_2>1$. Then $u_h\rightarrow u$ in $L^q([0,T)\times \dom)$.
\end{lemma}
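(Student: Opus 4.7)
The approach is the classical Aubin--Lions / Riesz--Fr\'echet--Kolmogorov argument adapted to the discrete setting. The uniform bound \eqref{eq:al-ass1} gives weak compactness of $\{u_h\}$ in $L^q([0,T)\times\dom)$, so the task is to promote weak convergence to strong convergence by controlling space and time translates uniformly in $h$.

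For space translates, extend $u_h$ to $\R\times\R^d$ compatibly with the discrete boundary conditions (reflection across $\pa\dom$ suffices for the homogeneous Neumann case considered in the paper). A telescoping argument over neighbouring cells then yields
\[
\|u_h(\cdot,\cdot+y)-u_h(\cdot,\cdot)\|_{L^q([0,T)\times\dom)} \;\leq\; C(|y|+h)\,\|\Grad_h u_h\|_{L^q},
\]
which by \eqref{eq:al-ass1} tends to $0$ as $|y|\to 0$ uniformly in $h$.

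The heart of the proof is the control of time translates. Fix a nonnegative mollifier $\rho\in C^\infty_c(\R^d)$ with $\int\rho=1$, set $\rho_\delta(x)=\delta^{-d}\rho(x/\delta)$, and define $u_h^\delta:=u_h*_x\rho_\delta$. Decompose
\[
u_h(\cdot+\tau)-u_h(\cdot) = \bigl[u_h^\delta(\cdot+\tau)-u_h^\delta(\cdot)\bigr] + (u_h-u_h^\delta)(\cdot+\tau) - (u_h-u_h^\delta)(\cdot).
\]
The last two contributions are bounded in $L^q$ by $C\delta\,\|\Grad_h u_h\|_{L^q}+\mathcal O(h)$ uniformly in $h$, by a standard mollifier estimate exploiting \eqref{eq:al-ass1}. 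For the smoothed difference, integrate the discrete evolution \eqref{eq:al-ass2} in time (using that $*_x\rho_\delta$ commutes with $D_t$) to write
\[
u_h^\delta(t+\tau)-u_h^\delta(t) = \int_t^{t+\tau}\!\bigl((A_h f_h)*_x\rho_\delta + g_h*_x\rho_\delta + k_h*_x\rho_\delta\bigr)\,ds.
\]
The crucial manoeuvre is to shift the discrete first-order operator $A_h$ off $f_h$ onto the smooth kernel by discrete integration by parts, $(A_h f_h)*_x\rho_\delta = f_h *_x A_h^\ast \rho_\delta$ (modulo $\mathcal O(h)$ boundary corrections), together with the Young bound $\|A_h^\ast\rho_\delta\|_{L^p}\leq C\delta^{-1-d(1-1/p)}$. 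Combining Young's convolution inequality in space with H\"older in time gives a schematic bound
\[
\|u_h^\delta(\cdot+\tau)-u_h^\delta(\cdot)\|_{L^q([0,T-\tau]\times\dom)} \leq C\bigl(\tau^{1/r_1'}\delta^{-1-a_1}+\tau^{1/r_2'}\delta^{-a_2}+\tau\,\delta^{-d(1-1/q)}\bigr),
\]
for exponents $a_1,a_2\geq 0$ determined by $d,q,r_1,r_2$. Choosing $\delta=\delta(\tau)\to 0$ slowly enough (as a power of $\tau$) makes the right-hand side vanish as $\tau\to 0$ uniformly in $h$.

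With both equicontinuity estimates in hand, the Riesz--Fr\'echet--Kolmogorov theorem produces a subsequence converging strongly in $L^q([0,T)\times\dom)$ to some $u$, which by uniqueness of limits coincides with the weak-$L^q$ limit already provided by \eqref{eq:al-ass1}. \emph{Main obstacle.} The technical heart is the mollification bookkeeping: extending the piecewise constant grid functions $u_h,f_h,g_h,k_h$ across $\pa\dom$ consistently with the discrete boundary conditions, verifying that the discrete integration by parts which transfers $A_h$ onto $\rho_\delta$ introduces only $\mathcal O(h)$ remainders in the regime $h\ll\delta$, and handling the $L^1$ term $k_h$, which imposes the tightest coupling between $\delta$ and $\tau$ in higher dimensions. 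Once this machinery is set up, the balance of exponents of $\delta$ and $\tau$ is essentially routine.
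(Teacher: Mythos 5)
Your argument is correct in outline but follows a genuinely different route from the paper's. The paper's proof is a reduction argument: it passes from the piecewise constant $u_h,f_h,g_h,k_h$ to piecewise linear interpolants $\widehat u_h,\widehat f_h,\widehat g_h,\widehat k_h$, invokes Ladyzhenskaya's norm equivalences to transfer the uniform bounds \eqref{eq:al-ass1}--\eqref{eq:al-ass3} into $\widehat u_h\in L^q([0,T);W^{1,q}(\dom))$ and $D_t\widehat u_h\in L^1([0,T);W^{-1,\min\{r_1,1^*\}}(\dom))$ (the $L^1$ datum $k_h$ enters through the embedding $L^1\subset W^{-1,s}$ for $s\le d/(d-1)$), and then applies the discrete Aubin--Lions theorem of Dreher and J\"ungel \cite{Dreher2012} as a black box before transferring strong convergence back to $u_h$. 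You instead prove compactness \emph{directly} via Riesz--Fr\'echet--Kolmogorov, controlling spatial translates by telescoping against $\Grad_h u_h$ and temporal translates by mollifying in space, integrating the discrete evolution equation, and shifting $A_h$ onto the mollifier kernel so that the first-order loss is paid in powers of $\delta$ rather than falling on $f_h$; the $L^1$ term $k_h$ is absorbed through the Young bound $\|k_h*\rho_\delta\|_{L^q}\lesssim\delta^{-d(1-1/q)}\|k_h\|_{L^1}$, and the $r_1,r_2>1$ hypothesis is used exactly as in the paper, here via H\"older in time giving positive powers of $\tau$. Your route is essentially an unfolding of what underlies results like Dreher--J\"ungel, so it is more self-contained and exposes why the hypotheses are what they are; its cost is the heavier bookkeeping you flag (boundary reflection, the $\mathcal O(h)$ remainders from discrete summation by parts when $h\ll\delta$, and the small wrinkle that RFK must tolerate an $h$-dependent error by treating the finitely many large-$h$ approximants separately, each being trivially compact). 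The paper's route is shorter and cleaner at the price of citing two nontrivial external ingredients.
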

\begin{proof}
Denote $\wu_h$ a piecewise linear interpolation of $u_h$ in space piecewise constant in time and similarly, let $\widehat{g}_h$, $\widehat{f}_h$ and $\widehat{k}_h$ piecewise linear interpolations of $g_h$, $f_h$ and $k_h$ respectively in space and piecewise constant in time such that
\begin{equation}\label{eq:jumalauta}
D_t \wu_h=A_h \widehat{f}_h + \widehat{g}_h+ \widehat{k}_h.
\end{equation}
By Ladyshenskaya's norm equivalences \cite[p. 230 ff]{Lady1985}, we have
\begin{equation*}
\begin{split}
\int_0^T \|\wu_h\|_{W^{1,q}(\dom)}^q\, dt&\leq C \lint{|u_h|^q+|\Grad_h u_h|^q}\\
\int_0^T\! \|\widehat{f}_h\|_{L^{r_1}(\dom)}^{r_1}\!+\!\|\widehat{g}_h\|_{L^{r_2}(\dom)}^{r_2}\!+\!\|\widehat{k}_h\|_{L^1(\dom)} dt &\leq C\!\lint{|f_h|^{r_1} + |g_h|^{r_2}+|k_h|}
\end{split}
\end{equation*}
where the right hand sides are bounded by assumptions \eqref{eq:al-ass1} and \eqref{eq:al-ass3}.
Since $L^1(\dom)\subset W^{-1,s}(\dom)$ for $1\leq s\leq 1^*=d/(d-1)$, we have that $\widehat{k}_h\in L^1([0,T];W^{-1,s}(\dom))$ for $1\leq s\leq 1^*=d/(d-1)$ and hence thanks to this and \eqref{eq:jumalauta}, we obtain
\begin{equation*}
\wu_h\in L^q([0,T);W^{1,q}(\dom)),\quad D_t\wu_h\in L^1([0,T); W^{-1,\min\{r_1,1^*\}}(\dom)),
\end{equation*}
uniformly with respect to the discretization parameter $h>0$. Thus we can apply the version \cite[Theorem 1]{Dreher2012} of the Aubin-Lions lemma to find that up to a subsequence $\wu_h\rightarrow u$ in $L^q([0,T)\times \dom)$ and the limit $u\in L^q([0,T); W^{1,q}(\dom))$. By \cite[Lemma 3.2., p. 226]{Lady1985} this implies that also $u_h\rightarrow u$ in $L^q([0,T)\times \dom)$ (and $\Grad_h u_h\weak \Grad u$ ).
%
%
%
\end{proof}
\begin{remark}[Derivatives]
If the $u_h$ in Lemma \ref{lem:A-L-lem} are of the form $\Grad_h v_h$ for some $v_h$ piecewise constant function, this lemma implies that $\Grad_h v_h\rightarrow \Grad v$ in $L^q$, again applying \cite[Lemma 3.2., p. 226]{Lady1985}
\end{remark}
\section{Technical Lemmas}
In this section, we prove the following lemma:
\begin{lemma}\label{lem:ellL1}
Let $u_h$ solve the difference equation
\begin{equation}\label{eq:do}
-\Div_h(A_h \Grad_h u_h)+c_h u_h=f_h,\quad x\in\dom,
\end{equation}
with homogeneous Neumann boundary conditions, where $A_h$ is a diagonal positive definite $d\times d$-matrix with entries $a_h^{(ii)}\geq \eta>0$ and $c_h\geq\nu>0$ uniformly in $h>0$, $x\in\dom$, $\dom$ is a rectangular domain in $\R^d$ and 
$$\|f_h\|_{L^1(\dom)}\leq M,$$
uniformly in $h>0$. We have denoted $\Grad_h:=\Grad_h^-$ and $\Div_h:=\Div_h^+$ (or alternatively $\Grad_h:=\Grad_h^+$ and $\Div_h:=\Div_h^-$). Then
\begin{equation*}
\|u_h\|_{L^q(\dom)}+\|\Grad_h u_h\|_{L^q(\dom)}\leq C,
\end{equation*}
where $1\leq q<d/(d-1)$, for a constant $C>0$ independent of $h>0$.
\end{lemma}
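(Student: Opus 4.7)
\medskip

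\noindent\textbf{Plan.} The proof will be a discrete version of the classical Stampacchia / Boccardo--Gallou\"et truncation argument for linear elliptic equations with $L^1$ data. Throughout, $T_k(s):=\max(-k,\min(s,k))$ denotes truncation at level $k>0$, which is nondecreasing and $1$--Lipschitz, so that the scalar inequality
\begin{equation*}
(a-b)(T_k(a)-T_k(b))\geq (T_k(a)-T_k(b))^2,\qquad a,b\in\R,
\end{equation*}
holds (because $s\mapsto s-T_k(s)$ is also nondecreasing). This inequality is the discrete analogue of the chain rule $\Grad u\cdot \Grad T_k(u)=|\Grad T_k(u)|^2$ and is all that is needed to carry finite-difference summation by parts through the truncation.

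\medskip

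\noindent\textbf{Step 1: truncation estimate.} Multiplying \eqref{eq:do} cell-wise by $T_k(u_{i,j})$ and summing against the weight $h^d$, discrete summation by parts (the boundary contributions vanish because $u_h$ satisfies homogeneous Neumann conditions and $T_k$ preserves them) yields
\begin{equation*}
h^d\!\sum_{i,j} A_h\,\Grad_h u_h\cdot\Grad_h T_k(u_h)+h^d\!\sum_{i,j} c_h\,u_h\,T_k(u_h)=h^d\!\sum_{i,j} f_h\,T_k(u_h).
\end{equation*}
The pointwise bound above applied direction by direction to each difference gives $A_h\Grad_h u_h\cdot\Grad_h T_k(u_h)\geq \eta |\Grad_h T_k(u_h)|^2$, while $c_h u_h T_k(u_h)\geq \nu\,u_h T_k(u_h)\geq 0$. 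Since $|T_k(u_h)|\leq k$ and $\|f_h\|_{L^1}\leq M$, the right-hand side is bounded by $kM$. A direct case check shows $u_h T_k(u_h)\geq |T_k(u_h)|^2$, so
\begin{equation*}
\|\Grad_h T_k(u_h)\|_{L^2(\dom)}^2+\|T_k(u_h)\|_{L^2(\dom)}^2\leq C_1\,k.
\end{equation*}

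\medskip

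\noindent\textbf{Step 2: distributional estimates.} Applying a discrete Gagliardo--Nirenberg--Sobolev inequality on the rectangular grid (see e.g.\ \cite{Bessemoulin2014}) to $T_k(u_h)$, and using Step 1, gives $\|T_k(u_h)\|_{L^{2^*}}\leq C\,k^{1/2}$ with $2^*=2d/(d-2)$ for $d\geq 3$ (and any finite exponent when $d=2$). Chebyshev's inequality then yields
\begin{equation*}
\mathrm{meas}\{|u_h|\geq k\}=\mathrm{meas}\{|T_k(u_h)|=k\}\leq k^{-2^*}\|T_k(u_h)\|_{L^{2^*}}^{2^*}\leq C_2\,k^{-d/(d-2)},
\end{equation*}
which already places $u_h$ in weak $L^{d/(d-2)}(\dom)$, hence in $L^q(\dom)$ for every $1\leq q<d/(d-2)$, uniformly in $h$.

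\medskip

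\noindent\textbf{Step 3: weak-$L^{d/(d-1)}$ bound for $\Grad_h u_h$, and conclusion.} For $\lambda>0$ and any $k>0$, partition the cells contributing to $\{|\Grad_h u_h|>\lambda\}$ into the set $E_1$ of cells \emph{all} of whose difference-stencil endpoints satisfy $|u_h|\leq k$, and its complement $E_2$. On $E_1$ one has $\Grad_h T_k(u_h)=\Grad_h u_h$ cell-by-cell, hence by Chebyshev and Step 1
\begin{equation*}
\mathrm{meas}(E_1\cap\{|\Grad_h u_h|>\lambda\})\leq \lambda^{-2}\|\Grad_h T_k(u_h)\|_{L^2}^2\leq C_1\,k\,\lambda^{-2}.
\end{equation*}
Every cell in $E_2$ is adjacent to a cell on which $|u_h|>k$; since each grid cell has at most $2d$ neighbours, $\mathrm{meas}(E_2)\leq 2d\cdot\mathrm{meas}\{|u_h|>k\}\leq C_2'\,k^{-d/(d-2)}$ by Step 2. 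Choosing $k=\lambda^{(d-2)/(d-1)}$ balances the two contributions to give
\begin{equation*}
\mathrm{meas}\{|\Grad_h u_h|>\lambda\}\leq C\,\lambda^{-d/(d-1)},
\end{equation*}
uniformly in $h>0$. Thus $\Grad_h u_h$ is uniformly bounded in the Marcinkiewicz space $L^{d/(d-1),\infty}(\dom)$, hence in $L^q(\dom)$ for every $1\leq q<d/(d-1)$; together with Step 2, this completes the proof.

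\medskip

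\noindent\textbf{Main obstacle.} The genuine difficulty is Step 3: the cell-counting argument relating $\mathrm{meas}(E_2)$ to $\mathrm{meas}\{|u_h|>k\}$ must be carried out carefully so that a single stencil's crossing of the level $k$ only costs a finite (dimension-dependent) multiple of one cell-measure, uniformly in $h$. A secondary technical point is the discrete Sobolev inequality with the right constant, uniform in $h$ under Neumann boundary conditions, which is where the $L^2$-control of $T_k(u_h)$ coming from the coercive zero-order term $c_h\geq\nu>0$ is essential.
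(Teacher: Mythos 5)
Your proof is correct and follows essentially the same route as the paper's: the truncation estimate (your Step~1 is Lemma~\ref{lem:truncL2}), the discrete Gagliardo--Nirenberg--Sobolev bound and Marcinkiewicz estimate for $u_h$ (Step~2), and the cell decomposition with the balancing choice $k=\lambda^{(d-2)/(d-1)}$ (Step~3, matching the paper's $\mathcal{B}(k)$, $\overline{\mathcal{B}(k)}$, $\mathcal{B}(k)^c$ decomposition and its equivalent choice $k=\lambda^{4/(2^*+2)}$) are all the same in substance. The only differences are cosmetic: your one-line scalar monotonicity inequality $(a-b)(T_k(a)-T_k(b))\geq (T_k(a)-T_k(b))^2$ condenses the paper's nine-case verification that $D^-_\ell S_k(u)\,D^-_\ell\big(u-S_k(u)\big)\geq 0$, and your neighbour count ($2d$, or more precisely $2d+1$ to include the cell itself) replaces the paper's cruder $3^d-1$, both of which serve equally well.
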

The proof of this lemma will be a (simplified) finite difference version of the proof of Theorem 2.1 in \cite{l1rhs}.
But before proving the lemma, we need to introduce some notation.
\begin{notation}\label{not:marcinkiewicz}
For any $r\in (1,\infty)$, we denote by $L^{r,\infty}(\dom)$ the Marcinkiewicz space with norm defined by 
\begin{equation*}
\|u\|_{L^{r,\infty}(\dom)}=\sup_{\lambda>0} \lambda |\{x\in\dom: |u(x)\geq \lambda\}|^{1/r}.
\end{equation*}
\end{notation}
The Marcinkiewicz spaces are continuously embedded in $L^q(\dom)$ for any $1\leq q<r$, \cite{gilbargtrudinger}:
\begin{equation}\label{eq:soad}
\|u\|_{L^q(\dom)}\leq C(q,r,|\dom|) \|u\|_{L^{r,\infty}(\dom)},\quad q\in[1,r).
\end{equation}
Moreover, we need the trunctation operator $S_k$ defined as follows:
\begin{notation}\label{not:truncation}
Let $k>0$ be a real number. Then we define the truncation operator $S_k:\R\rightarrow \R$ by
\begin{equation*}
S_k(s)=\begin{cases}
 s,& \textrm{if}\,\, |s|\leq k,\\
 k \frac{s}{|s|},&\textrm{if}\,\, |s|\geq k.
\end{cases}
\end{equation*}
\end{notation}
It will be convenient in the proof to use the following tuple notation for the finite difference approximations:
\begin{notation}\label{not:tuple}
We denote $\underline{i}:=(i_1,\dots,i_d)$, $i_\ell=1,\dots, N_\ell$, $N_\ell$ the number of cells in the $\ell$th spatial direction, a $d$-dimensional tuple and and $u_{\underline{i}}$ the approximation in cell $\cell_{\underline{i}}:=((i_1-1)h,i_1h]\times\dots\times ((i_d-1)h,i_d h]$. The piecewise constant function $u_h$ can be written as
\begin{equation*}
u_h(x):=\sum_{\underline{i}} u_{\underline{i}}\, \mathbf{1}_{\cell_{\underline{i}}}(x),\quad x\in \dom.
\end{equation*}
\end{notation}
We also need the following auxilary result:
\begin{lemma}\label{lem:truncL2}
Let $u_h$ solve the difference equation \eqref{eq:do} under the assumptions of Lemma \ref{lem:ellL1}. Then
\begin{equation}\label{eq:gaga}
\int_{\dom} |\Grad_h S_k(u_h)|^2+|S_k(u_h)|^2 dx\leq C M k,\quad \forall \, k>0,
\end{equation}
for some constant $C>0$ independent of $h>0$.
\end{lemma}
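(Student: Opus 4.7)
My plan is to adapt the classical Boccardo--Gallou\"et trick from \cite{l1rhs} to the discrete setting, using $S_k(u_h)$ itself as a ``test function'' in the difference equation \eqref{eq:do}. The key observation is that $S_k$ is monotone and $1$-Lipschitz, and in particular bounded by $k$, so pairing the equation against $S_k(u_h)$ should produce a coercive left-hand side in $\Grad_h S_k(u_h)$ and $S_k(u_h)$, while the right-hand side picks up only a factor of $k$ against $\|f_h\|_{L^1}$.

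Concretely, I would multiply \eqref{eq:do} by $h^d S_k(u_{\underline{i}})$ and sum over all tuples $\underline{i}$. Discrete summation by parts, which is valid because the homogeneous Neumann boundary conditions cancel the boundary contributions coming from the ghost cells, rewrites the leading term as
\begin{equation*}
h^d\sum_{\underline{i}} A_h\Grad_h u_h\cdot \Grad_h S_k(u_h)
=h^d\sum_{\underline{i}}\sum_{\ell=1}^d a^{(\ell\ell)}_{\underline{i}}\,(D_\ell u_h)_{\underline{i}}\,(D_\ell S_k(u_h))_{\underline{i}},
\end{equation*}
since $A_h$ is diagonal. The crucial pointwise inequality is the discrete analogue of $S_k'\in[0,1]$: for any real numbers $a,b$,
\begin{equation*}
(a-b)\bigl(S_k(a)-S_k(b)\bigr)\geq \bigl(S_k(a)-S_k(b)\bigr)^{2},
\end{equation*}
which applied with $a=u_{\underline{i}}$, $b=u_{\underline{i}-e_\ell}$ yields $(D_\ell u_h)_{\underline{i}}(D_\ell S_k(u_h))_{\underline{i}}\geq |(D_\ell S_k(u_h))_{\underline{i}}|^{2}$. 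Combined with $a^{(\ell\ell)}_{\underline{i}}\geq \eta$, the diffusion term is therefore bounded below by $\eta\|\Grad_h S_k(u_h)\|_{L^2(\dom)}^{2}$. For the reaction term I use that $s\,S_k(s)\geq |S_k(s)|^{2}$ for every $s\in\R$ together with $c_h\geq \nu$, giving a lower bound $\nu\|S_k(u_h)\|_{L^2(\dom)}^{2}$.

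For the right-hand side, the estimate $|S_k(u_{\underline{i}})|\leq k$ together with the $L^1$-bound on $f_h$ gives
\begin{equation*}
h^d\Bigl|\sum_{\underline{i}} f_{\underline{i}}\,S_k(u_{\underline{i}})\Bigr|\leq k\,\|f_h\|_{L^1(\dom)}\leq Mk.
\end{equation*}
Putting the three bounds together produces \eqref{eq:gaga} with $C=\max(1/\eta,1/\nu)$, independently of $h$. The only delicate point, and the one I would check carefully, is the chain-rule-type inequality for the discrete gradient above; everything else is a bookkeeping computation using the Neumann boundary conditions to justify the discrete integration by parts with no boundary contribution.
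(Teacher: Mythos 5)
Your proposal is correct and follows essentially the same route as the paper: multiply the difference equation by $S_k(u_h)$, sum by parts using the Neumann boundary conditions, bound the right-hand side by $kM$, and lower-bound the left-hand side coercively. The one local improvement is that the paper establishes the coercivity of the diffusion term by decomposing $\Grad_h u_h=\Grad_h S_k(u_h)+\Grad_h(u_h-S_k(u_h))$ and verifying $D_\ell^- S_k(u_{\underline{i}})\,D_\ell^-(u_{\underline{i}}-S_k(u_{\underline{i}}))\geq 0$ through an explicit nine-way case distinction, whereas your one-line inequality $(a-b)(S_k(a)-S_k(b))\geq (S_k(a)-S_k(b))^2$, a direct consequence of $S_k$ being monotone and $1$-Lipschitz, delivers the same conclusion more cleanly.
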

\begin{proof}
Given $k>0$, we multiply equation \eqref{eq:do} by $S_k(u_h)$ and integrate over the domain $\dom$. After changing variables in the integrals, we obtain
\begin{equation}\label{eq:dubeli}
\int_{\dom} \left(A_h \Grad_h u_h\right)\cdot \Grad_h S_k(u_h) +c_h u_h S_k(u_h)\, dx=\int_{\dom} f_h S_k(u_h)\, dx.
\end{equation}
The right hand side can be bounded by $M k$ using H\"{o}lder's inequality. The left hand side, we can rewrite and estimate as follows
\begin{equation*}
\begin{split}
&\int_{\dom} \left(A_h \Grad_h u_h\right)\cdot \Grad_h S_k(u_h) +c_h u_h S_k(u_h)\, dx\\
&\quad = \int_{\dom} \left(A_h \Grad_h S_k(u_h)\right)\cdot \Grad_h S_k(u_h) +c_h  |S_k(u_h)|^2\, dx\\
&\quad\quad+     \int_{\dom} \left(A_h \left(\Grad_h \left[u_h-S_k(u_h)\right]\right)\right)\cdot \Grad_h S_k(u_h) +c_h \left(u_h-S_k(u_h)\right) S_k(u_h)\, dx\\
&\quad \geq \eta \|\Grad_h S_k(u_h)\|^2_{L^2(\dom)}+\nu \|S_k(u_h)\|^2_{L^2(\dom)} \\
&\quad\quad+ \int_{\dom} \left(A_h \left(\Grad_h \left[u_h-S_k(u_h)\right]\right)\right)\cdot \Grad_h S_k(u_h) +c_h \left(u_h-S_k(u_h)\right) S_k(u_h)\, dx.
\end{split}
\end{equation*}
$(u_h-S_k(u_h))$ is either zero or has the same sign as $S_k(u_h)$. Therefore $(u_h-S_k(u_h)) S_k(u_h)\geq 0$ and 
\begin{equation*}
 \int_{\dom}c_h \left(u_h-S_k(u_h)\right) S_k(u_h)\, dx\geq 0.
\end{equation*}
In order to prove that the other term is positive as well, we will show that 
\begin{equation*}
D^-_\ell S_k(u_{\underline{i}}) D^-_\ell \left(u_{\underline{i}}-S_k(u_{\underline{i}})\right)\geq 0,\quad \forall \,\underline{i},\,\,\ell=1,\dots, d.
\end{equation*}
The proof of this fact consists of boring case distinctions and is exactly analoguous for $\ell=1,2, (3)$, therefore we will do it only for $\ell=1$ and omit writing the tuple index $\underline{i}$. Then we have
\begin{equation*}
 D^-_1(u_i-S_k(u_i)) D^-_1 S_k(u_i)=\begin{cases}
(u_i-k)(k-u_{i-1}),& u_i>k,\, |u_{i-1}|\leq k,\\
(u_i+k)(-k-u_{i-1}),& u_i<-k,\, |u_{i-1}|\leq k,\\
0,& |u_i|\leq k,\, |u_{i-1}|\leq k,\\
(-u_{i-1}+k)(u_i-k),& |u_i|\leq k,\, u_{i-1}>k,\\
(-u_{i-1}-k)(u_i+k),& |u_i|\leq k,\, u_{i-1}<-k,\\
0,& u_i>k,\, u_{i-1}>k,\\
0,& u_i<-k,\, u_{i-1}<-k,\\
(u_i-u_{i-1}-2k) 2k,&  u_i>k,\,  u_{i-1}<-k,\\
-(u_i-u_{i-1}+2k)  2k,&  u_i<-k,\, u_{i-1}>k.
\end{cases}
\end{equation*}
The potential reader is welcome to check that these are all the possible cases and that each of the terms on the right hand side is nonnegative.
Thus we have that
\begin{multline*}
\int_{\dom} \left(A_h \Grad_h u_h\right)\cdot \Grad_h S_k(u_h) +c_h u_h S_k(u_h)\, dx\\
 \geq \eta \|\Grad_h S_k(u_h)\|^2_{L^2(\dom)}+\nu \|S_k(u_h)\|^2_{L^2(\dom)} 
\end{multline*}
which implies \eqref{eq:gaga} together with the estimate on the right hand side of \eqref{eq:dubeli}
\end{proof}
\begin{proof}[Proof of Lemma \ref{lem:ellL1}]
First, we note that by the discrete Gagliardo-Nirenberg-Sobolev inequality, \cite[Thm. 3.4]{Bessemoulin2014},
\begin{equation*}
\int_{\dom} |S_k(u_h)|^{2^*} dx\leq C^{2^*}\left(\int_{\dom}|\Grad_h S_k(u_h)|^2+|S_k(u_h)|^2dx\right)^{\frac{2^*}{2}},
\end{equation*}
where $2^*=2d/(d-2)$ if $d\geq 3$ and any number with $1\leq 2^*<\infty$ if $d=2$, and where $C$ is a constant depending on $|\dom|$ but not on $h>0$. By Lemma \ref{lem:truncL2}, we can bound the right hand side and obtain therefore
\begin{equation}\label{eq:kacke}
\int_{\dom} |S_k(u_h)|^{2^*} dx\leq C (k M)^{\frac{2^*}{2}}.
\end{equation}
Now we define the set $\mathcal{B}(k)$ by
\begin{equation*}
\mathcal{B}(k)=\{ \cell_{\underline{i}}\subset \dom : |u_{\underline{i}}|\geq k\}.
\end{equation*}
We have
\begin{equation*}
\int_{\mathcal{B}(k)} |S_k(u_h)|^{2^*}dx\geq k^{2^*} |\mathcal{B}(k)|,
\end{equation*}
and therefore, using \eqref{eq:kacke},
\begin{equation}\label{eq:hmm}
|\mathcal{B}(k)|\leq \frac{1}{k^{2^*}}\int_{\mathcal{B}(k)} |S_k(u_h)|^{2^*}dx\leq \frac{1}{k^{2^*}}\int_{\dom} |S_k(u_h)|^{2^*}dx\leq \frac{C M^{\frac{2^*}{2}}}{k^{\frac{2^*}{2}}}
\end{equation}
which implies that $u_h\in L^{r,\infty}(\dom)$ for $r=2^*/2$ (which is $d/(d-2)$ if $d\geq 3$) since the choice of $k>0$ was arbitrary. Now denote
\begin{align*}
\partial \mathcal{B}(k)&:=\{ \cell_{\underline{i}}\subset \dom : \exists\, \underline{j}, |\underline{i}-\underline{j}|=1,\, |u_{\underline{j}}|\geq k \}\\
\overline{\mathcal{B}(k)}&:=\mathcal{B}(k)\cup \partial \mathcal{B}(k),\\
\mathcal{B}(k)^c&:=\dom\backslash \overline{B}(k),
\end{align*}
where $|\underline{i}-\underline{j}|=\max_{1\leq \ell\leq d} |i_\ell - j_\ell|$. Informally speaking, the cells in $\partial \mathcal{B}(k)$ have a neighbor cell which is contained in $\mathcal{B}(k)$. We have
\begin{equation*}
|\partial \mathcal{B}(k)|\leq (3^d-1) |\mathcal{B}(k)|\leq \frac{C M^{\frac{2^*}{2}}}{k^{\frac{2^*}{2}}},
\end{equation*}
by \eqref{eq:hmm}. Now let $\lambda>0$, $k>0$ and decompose 
\begin{multline*}
\{x\in \dom : |\Grad_h u_h(x)|\geq \lambda\}=\{x\in \dom : |\Grad_h u_h(x)|\geq \lambda\, \mathrm{and}\, x\in \overline{\mathcal{B}(k)}\}\\
\cup \{x\in \dom : |\Grad_h u_h(x)|\geq \lambda\, \mathrm{and}\, x\in \mathcal{B}(k)^c\}.
\end{multline*}
Hence
\begin{equation*}
|\{x\in \dom : |\Grad_h u_h(x)|\geq \lambda\}|\leq |\overline{\mathcal{B}(k)}| + |\{x\in \dom : |\Grad_h u_h(x)|\geq \lambda\, \mathrm{and}\, x\in \mathcal{B}(k)^c\}|.
\end{equation*}
On $\mathcal{B}(k)^c$ and the cells bordering the set, we have $|u_h|\leq k$ and therefore $u_h=|S_k(u_h)|$. Hence we can estimate the size of the second set in the above inequality,
\begin{align*}
&|\{x\in \dom : |\Grad_h u_h(x)|\geq \lambda\, \mathrm{and}\, x\in \mathcal{B}(k)^c\}|\\
&\quad = |\{x\in \dom : |\Grad_h S_k(u_h)(x)|\geq \lambda\, \mathrm{and}\, x\in \mathcal{B}(k)^c\}|\\
&\quad \leq |\{x\in \dom : |\Grad_h S_k(u_h)(x)|\geq \lambda\, \}|\\
&\quad \leq \frac{1}{\lambda^2} \int_{\dom} |\Grad_h S_k(u_h)|^2 dx,
\end{align*}
where we have used Chebyshev inequality for the last step. Now we can estimate the size of the set $\{x\in \dom : |\Grad_h u_h(x)|\geq \lambda\}$ using \eqref{eq:gaga} once more, 
\begin{equation*}
|\{x\in \dom : |\Grad_h u_h(x)|\geq \lambda\}|\leq \frac{C M^{\frac{2^*}{2}}}{k^{\frac{2^*}{2}}}+\frac{C k M}{\lambda^2}.
\end{equation*}
Choosing $k=\lambda^{\frac{4}{2^*+2}}$, we obtain
\begin{equation*}
\lambda^{\frac{2 2^*}{2^*+2}} |\{x\in \dom : |\Grad_h u_h(x)|\geq \lambda\}|\leq C(d,M,|\dom|).
\end{equation*}
 If $d\geq 3$, we have $\frac{2 2^*}{2^*+2}=\frac{d}{d-1}$ and so $u_h, \Grad_h u_h\in L^{r,\infty}(\dom)$ for $1\leq r\leq d/(d-1)$. For $d=2$, since $2^*$ is an arbitrary finite positive number, we can achieve the same. Using the embedding of the Marcinkiewicz spaces, \eqref{eq:soad}, we obtain the claim of the lemma.
\end{proof}

\section*{Acknowlegments}
The work of K.T.  was supported  in part by the National Science Foundation under the grant DMS-1211519. The work of F.W. was supported by the Research Council of Norway, project 214495 LIQCRY.
F.W. gratefully acknowledges the support by the Center for Scientific Computation and Mathematical Modeling  at the University of Maryland where part of this research was performed during her visit in Fall 2014.

\def\cprime{$'$}

\end{document}